\newcommand{\EE}{\mathbb E}
\newcommand{\PP}{\mathbb P}
\newcommand{\RR}{\mathbb R}
\newcommand{\TT}{\mathbb T}
\newcommand{\R}{\RR}
\newcommand{\cC}{\mathcal C}
\newcommand{\cF}{\mathcal F}
\newcommand{\cG}{\mathcal G}
\newcommand{\cH}{\mathcal H}
\newcommand{\cJ}{\mathcal J}
\newcommand{\cL}{\mathcal L}
\newcommand{\cM}{\mathcal M}
\newcommand{\cP}{\mathcal P}
\newcommand{\cT}{\mathcal T}
\newcommand{\cU}{\mathcal U}
\newcommand{\cW}{\mathcal W}
\newcommand{\diver}{\mathrm{div}}
\renewcommand{\div}{\mathrm{div}}
\newcommand{\grad}{\nabla}
\newcommand{\ds}{\displaystyle}
\newcommand{\dt}{{\Delta t}}
\definecolor{mypurple}{RGB}{140,0,255}
\definecolor{myred}{rgb}{255,0,0}
\definecolor{mydarkturquoise}{RGB}{0,206,209}
\definecolor{mydeeppink}{RGB}{255,20,147}
\definecolor{darkblue}{RGB}{0,0,140}
\definecolor{blue2}{RGB}{0,0,0}
\definecolor{middleblue}{RGB}{0,0,71}
\definecolor{light-gray}{gray}{0.9}
\definecolor{ProcessBlue}{cmyk}{1,0,0,0.40}
\definecolor{Black}{cmyk}{0,0,0,1}
\definecolor{Red}{cmyk}{0,1,1,0.2}
\definecolor{Green}{cmyk}{0.9,0,1,0}
\definecolor{Orange}{cmyk}{0,0.61,0.87,0.5}
\definecolor{Fuchsia}{cmyk}{0.47,0.91,0,0.06}
\definecolor{PineGreen}{cmyk}{0.92,0,0.59,0.30}
\DeclareMathOperator*{\argmax}{arg\,max}
\DeclareMathOperator*{\argmin}{arg\,min}
\renewcommand{\Im}{\mathrm{Im}}
\newcommand{\Ker}{\mathrm{Ker}}
\begin{document}

\title*{Mean Field Games and Applications: Numerical Aspects}
\author{Yves Achdou and Mathieu Lauri{\`e}re}
\institute{Yves Achdou  \at Universit{\'e} de Paris, Laboratoire Jacques-Louis Lions (LJLL), F-75013 Paris, France, \email{achdou@ljll-univ-paris-diderot.fr}
\and Mathieu Lauri{\`e}re  \at Princeton University, Operations Research and Financial Engineering (ORFE) department, Sherrerd Hall, Princeton, New Jersey, U.S.A., \email{lauriere@princeton.edu}}

%
%
\maketitle

\abstract*{No more than 200 words}


\abstract{
The theory of  mean field games  aims at studying deterministic or stochastic differential
games (Nash equilibria) as the number of agents tends to infinity. Since  very few mean field games have explicit or semi-explicit solutions, 
numerical simulations  play a crucial role in obtaining quantitative information from this class of models.
They may lead to  systems of  evolutive partial differential equations coupling a backward Bellman equation and a forward Fokker-Planck equation.
In the present survey, we focus on such systems.  The forward-backward structure is an important feature of 
this system, which makes it necessary to design unusual strategies for mathematical analysis and   numerical approximation.
In this survey,  several aspects of a finite difference method used to approximate the previously mentioned system of PDEs are discussed, including convergence,
variational aspects and algorithms for solving the resulting  systems of nonlinear equations.
Finally, we discuss in details two applications of mean field games  to the study of crowd motion and to macroeconomics,  a comparison with mean field type control, and present numerical simulations.
}

\section{Introduction}
\label{sec:intro}

The theory of  mean field games ({\sl MFGs} for short) aims at studying deterministic or stochastic differential
games (Nash equilibria) as the number of agents tends to infinity. It has  been introduced in the pioneering  works of J-M. Lasry and P-L. Lions~\cite{MR2269875,MR2271747,MR2295621} and a similar approach has been formulated independently by Caines, Huang, and Malham{\'e} under the name of Nash Certainty Equivalence principle~\cite{MR2346927}. In MFGs, one supposes that the rational agents are  indistinguishable and  individually have a negligible influence on the game, and that each individual strategy is influenced by some averages of quantities depending on the states (or the controls) of the other agents. The applications of MFGs are numerous, from economics to the study of crowd motion. On the other hand, very few MFG problems have explicit or semi-explicit solutions. Therefore, numerical simulations of MFGs play a crucial role in obtaining quantitative information from this class of models.

The paper is organized as follows: in section \ref{finite_difference_scheme}, we discuss finite difference schemes for the system of forward-backward PDEs. In section \ref{sec:varMFG}, we focus on the case when the latter system can be interpreted as the system of optimality of a convex control problem driven by a linear PDE (in this case, the terminology {\sl variational MFG} is often used), and put the stress on primal-dual optimization methods that may be used in this case. Section \ref{sec:mult-prec} is devoted to multigrid preconditioners that prove to be very important in particular as ingredients of the latter primal-dual methods for  stochastic variational MFGs (when the volatility is positive). In Section \ref{sec:other-algor-solv}, we address numerical algorithms that may be used also for non-variational MFGs.  Sections \ref{sec:an-example-with}, \ref{MFTC} and \ref{sec:mfgs-macroeconomics} are devoted to examples of applications of mean field games and their numerical simulations. Successively, we consider a model for a pedestrian flow with two populations, a comparison between mean field games and mean field control still for a model of  pedestrian flow, and applications of MFGs to the field of macroeconomics.

In what follows, we suppose for simplicity that the state space is the $d$-dimensional torus $\TT^d$,  and we fix a finite time horizon $T>0$. The periodic setting makes it possible to avoid the discussion on non periodic boundary conditions which always bring additional difficulties. The results stated below may be generalized to other boundary conditions, but this would lead us too far. Yet, sections~\ref{sec:an-example-with}, \ref{MFTC} and \ref{sec:mfgs-macroeconomics} below, which are devoted to some applications of MFGs and to numerical simulations, deal with realistic boundary conditions. In particular, boundary conditions linked to state constraints play a key role in section~\ref{sec:mfgs-macroeconomics}.

We will use the notation $Q_T = [0,T] \times \TT^d$, and $\langle \cdot, \cdot \rangle$ for the inner product of two vectors (of compatible sizes).

Let $f: \TT^d \times \RR \times \RR^d \to \RR, (x,m,\gamma) \mapsto f(x,m,\gamma)$ and $\phi: \TT^d \times \RR \to \RR, (x,m) \mapsto \phi(x,m)$ be respectively a running cost and a terminal cost, on which assumptions will be made later on. Let $\nu>0$ be a constant parameter linked to the volatility. Let $b:  \TT^d \times \RR \times \RR^d \to \RR^d, (x,m,\gamma) \mapsto b(x,m,\gamma)$ be a drift function.

We consider the following MFG: find a flow of probability densities $\hat m: Q_T \to \RR$ and a feedback control $\hat v: Q_T\to \RR^d$ satisfying the following two conditions:
\begin{enumerate}
	\item $\hat v$ minimizes
\begin{align*}
	J_{\hat m}: v \mapsto J_{\hat m}(v) = \EE \left[\int_0^T f(X_t^v, \hat m(t,X_t^v), v(t,X_t^v) ) dt + \phi(X_T^v, \hat m(T,X_T^v)) \right]
\end{align*}
under the constraint that the  process $X^v = (X_t^v)_{t \ge 0}$ solves the stochastic differential equation (SDE)
\begin{equation}
\label{eq:dyn-X-general}
	d X_t^v = b(X_t^v, \hat m(t,X_t^v), v(t, X_t^v)) dt + \sqrt{2 \nu} d W_t, \qquad t \ge 0,
\end{equation}
and $X_0^v$ has distribution with density $m_0$;
	\item For all $t \in [0,T]$, $\hat m(t,\cdot)$ is the law of $X_t^{\hat v}$.
\end{enumerate}

It is useful to note that for a given feedback control $v$, the density $m^v_t$ of the law of $X^v_t$ following~\eqref{eq:dyn-X-general} solves the Kolmogorov-Fokker-Planck (KFP) equation:
\begin{equation}
\label{eq:intro-MFG-KFP}
\begin{cases}
	\displaystyle
	\frac{\partial m^{v}}{\partial t}(t,x)  - \nu \Delta m^{v}(t,x) + \div\left( m^{v}(t,\cdot) b(\cdot, \hat m(t,\cdot), v(t,\cdot)) \right)(x) = 0, 
	&\hbox{ in } (0,T] \times \TT^d ,
	\\
	m^{v}(0,x) = m_0(x), 
	&\hbox{ in } \TT^d.
\end{cases}
\end{equation}

Let $H: \TT^d \times \RR \times \RR^d \ni (x,m,p) \mapsto H(x,m,p) \in \RR$ be the Hamiltonian of the control problem faced by an infinitesimal agent in the first point above, which is defined by
$$
	H: \TT^d \times \RR \times \RR^d \ni (x,m,p) \mapsto H(x,m,p) = \max_{\gamma \in \RR^d} -L(x,m,\gamma,p)\in \RR,
$$
where $L$ is the Lagrangian, defined by
$$
L: \TT^d \times \RR \times \RR^d \times \RR^d \ni (x,m,\gamma,p) \mapsto L(x,m,\gamma,p) =
f(x,m,\gamma) + \langle b(x,m,\gamma) , p \rangle \in \RR.
$$
In the sequel, we will assume that the running cost $f$ and the drift $b$ are such that $H$ is well-defined, $\cC^1$ with respect to $(x,p)$, and strictly convex with respect to $p$.

From standard optimal control theory, one can characterize the best strategy through the value function $u$ of the above optimal control problem for a typical agent, which satisfies a Hamilton-Jacobi-Bellman (HJB) equation. Together with the equilibrium condition on the distribution, we obtain that the equilibrium best response $\hat v$ is characterized by
$$
	\hat v(t,x) = \argmax_{a \in \RR^d} \big\{ -f(x, m(t,x), a) - \langle  b(x, m(t,x), a) , \nabla u(t,x) \rangle  \big\},
$$
where $(u,m)$ solves the following forward-backward PDE system:
\begin{subequations}\label{eq:PDE-system-MFG}
     \begin{empheq}[left=\empheqlbrace]{align}
     	\displaystyle
	&-\frac{\partial u}{\partial t}(t,x)  - \nu \Delta u(t,x) + H(x, m(t,x), \grad u(t,x))=0, 
	&&\hbox{ in } [0,T) \times \TT^d,
	\label{eq:PDE-system-MFG-HJB}
	\\
	\displaystyle
	&\frac{\partial m}{\partial t}(t,x)  - \nu \Delta m(t,x) - \div\left( m(t,\cdot) H_p (\cdot, m(t,\cdot), \grad u(t,\cdot)) \right)(x) = 0, 
	&&\hbox{ in } (0,T] \times \TT^d,
	\label{eq:PDE-system-MFG-KFP}
	\\
	&u(T,x) = \phi(x, m(T,x)), \qquad m(0,x) = m_0(x), 
	&&\hbox{ in } \TT^d.
	\label{eq:PDE-system-MFG-IT}
     \end{empheq}
\end{subequations}

\begin{example}[$f$ depends separately on $\gamma$ and $m$]
\label{ex:special-case}
Consider the case where the drift is the control, i.e. $b(x,m,\gamma) = \gamma$, and the running cost is of the form $f(x,m,\gamma) = L_0(x,\gamma) + f_0(x,m)$ where $L_0(x,\cdot): \RR^d \ni \gamma \mapsto L_0(x,\gamma) \in \RR$ is strictly convex and such that $\lim_{|\gamma|\to \infty} \min_{x\in \TT^d} \frac {L_0(x,\gamma)} {|\gamma|}=+\infty$. We set $H_0(x,p)= \max_{\gamma\in \RR^d} \langle -p,\gamma\rangle - L_0(x,\gamma)$, which is convex with respect to $p$. Then
	$$
	H(x,m,p)
	 = \max_{\gamma \in \RR^d} \{- L_0(x,\gamma) -  \langle \gamma , p  \rangle  \} - f_0(x,m)
	 =  H_0(x, p)  - f_0(x,m).
	$$
	In particular, if $H_0(x,\cdot) = H^*_0(x,\cdot) = \tfrac{1}{2}|\cdot|^2$, then the maximizer in the above expression is
        $ \hat \gamma (p) = - p$,  the Hamiltonian reads $H(x,m,p) = \tfrac{1}{2} |p|^2 -f_0(x,m)$
        and the equilibrium best response is $\hat v(t,x) = - \nabla u(t,x)$ where $(u,m)$ solves the PDE system
	\begin{subequations}
     \begin{empheq}[left=\empheqlbrace]{align*}
     	\displaystyle
	&-\frac{\partial u}{\partial t}(t,x)  - \nu \Delta u(t,x) + \frac{1}{2} |\grad u(t,x)|^2 = f_0(x, m(t,x)), 
	&&\hbox{ in } [0,T) \times \TT^d,
	\\
	\displaystyle
	&\frac{\partial m}{\partial t}(t,x)  - \nu \Delta m(t,x) - \div\left( m(t,\cdot) \grad u(t,\cdot) \right)(x) = 0, 
	&&\hbox{ in } (0,T] \times \TT^d,
	\\
	&u(T,x) = \phi(x, m(T,x)), \qquad m(0,x) = m_0(x), 
	&&\hbox{ in } \TT^d.
     \end{empheq}
\end{subequations}
\end{example}

\begin{remark}\label{sec:introduction}
The setting presented above is somewhat restrictive and does not cover the case when the Hamiltonian depends non locally on $m$.
 Nevertheless, the latter situation makes a lot of sense from the modeling viewpoint. The case when the Hamiltonian depends in a separate manner on $\nabla u$ and $m$ and when the coupling cost 
 continuously  maps probability measures on $\TT^d$ to smooth functions plays an important role in the theory of mean field games, because it permits to obtain
 the most elementary existence results of strong solutions of the system of PDEs, see~\cite{MR2295621}. Concerning the numerical aspects, all what follows may be easily adapted to the latter case, see the numerical simulations at the end of  Section \ref{finite_difference_scheme}. Similarly, the case when the volatility  $\sqrt{2\nu}$ is a function of the state variable $x$ can also be dealt with by finite difference schemes. 
\end{remark}

\begin{remark}
  \label{sec:introduction-1}
Deterministic mean field games  (i.e. for $\nu=0$) are also quite meaningful. One may also consider volatilities as functions of  the state variable that may vanish.
 When the Hamiltonian depends separately on $\nabla u$ and $m$ and the coupling cost is a smoothing map, see Remark~\ref{sec:introduction}, the Hamilton-Jacobi equation (respectively the Fokker-Planck equation) should be understood in viscosity  sense (respectively in the sense of of distributions), see the notes of P. Cardaliaguet~\cite{cardaliaguet2010}. When the coupling costs depends locally on $m$, then under some assumptions on the growth at infinity of the coupling cost as a function of $m$, it is possible to propose a notion of weak solutions to the system of PDEs, see~\cite{MR2295621,MR3399179,MR3305653}. The numerical schemes discussed below can also be applied to these situations, even 
if the convergence results in the available literature are obtained under the hypothesis that $\nu$ is bounded from below by a positive constant. The results and methods presented in Section~\ref{sec:varMFG} below for variational MFGs, i.e. when the system of PDEs can be seen as the optimality conditions of an optimal control problem driven by a PDE, hold if $\nu$ vanishes.
In Section \ref{sec:mfgs-macroeconomics} below, we present some applications and numerical simulations for which the viscosity is zero.
\end{remark}
\begin{remark}
  \label{sec:introduction-2}
The study of the so-called {\sl master equation} plays a key role in the theory of MFGs, see~\cite{PLL-CDF,MR3967062}: the mean field game is described by a single first or second order time dependent partial differential equation in the set of probability measures, thus in an infinite dimensional space in general.  When the state space is finite (with say $N$ admissible states), the distribution of states is a linear combination of $N$ Dirac masses, and the master equation is  posed in $\R^N$;  numerical simulations are then possible, at least if $N$ is not too large. We will not discuss this aspect  in the present notes.
\end{remark}

\section{Finite difference schemes}
\label{finite_difference_scheme}

In this section, we present a finite-difference scheme first introduced in~\cite{MR2679575}. We consider the special case described in Example~\ref{ex:special-case}, with $H(x,m,p) = H_0(x,p) - f_0(x,m)$, although similar methods have been proposed, applied and at least partially analyzed in situations when the Hamiltonian does not depend separately on $m$ and $p$, for example models addressing congestion, see for example~\cite{YAJML}. 

To alleviate the notation, we present the scheme in the one-dimensional setting, i.e. $d=1$ and the domain is $\TT$.

\begin{remark}
  Although we focus on the time dependent problem, a similar scheme has also been studied for the ergodic MFG system, see~\cite{MR2679575}.
\end{remark}

\paragraph{\textbf{Discretization.} }

Let $N_T$ and $N_h$ be two positive integers. We consider $(N_T+1)$ and $(N_h+1)$ points in  time and space respectively. Let $\Delta t = T/N_T$ and $h = 1/N_h$, and $t_n = n \times \Delta t, x_i = i \times h$ for $(n,i) \in \{0,\dots,N_T\}\times\{0,\dots,N_h\}$. 

We approximate $u$ and $m$ respectively by vectors $U$ and $M \in \RR^{(N_T+1)\times(N_h+1)}$, that is, $u(t_n,x_i) \approx U^{n}_{i}$ and $m(t_n,x_i) \approx M^{n}_{i}$ for each $(n,i) \in \{0,\dots,N_T\}\times\{0,\dots,N_h\}$. We use a superscript and a subscript respectively for the time and space indices. Since we consider periodic boundary conditions, we slightly abuse notation and for $W \in \RR^{N_h+1}$, we identify $W_{N_h+1}$ with $W_1$, and $W_{-1}$ with $W_{N_h-1}$.

We introduce the finite difference operators
\begin{align*}
	(D_t W)^n &= \frac{1}{\Delta t}(W^{n+1} - W^n), \qquad &&n \in \{0, \dots N_T-1\}, \qquad W \in \RR^{N_T+1},
	\\
	(D W)_i &= \frac{1}{h} (W_{i+1} - W_{i}), \qquad &&i  \in \{0, \dots N_h\}, \qquad W \in \RR^{N_h+1},
	\\
	(\Delta_h W)_i &= -\frac{1}{h^2} \left(2 W_i - W_{i+1} - W_{i-1}\right), \qquad &&i \in \{ 0, \dots N_h\}, \qquad W \in \RR^{N_h+1},
	\\
	[\grad_h W]_i &= \left( (D W)_{i}, (D W)_{i-1} \right)^\top, \qquad &&i \in \{ 0, \dots N_h\}, \qquad W \in \RR^{N_h+1}.
\end{align*}

\paragraph{\textbf{Discrete Hamiltonian.} }
Let $\tilde H: \TT \times \RR \times \RR \to \RR, (x,p_1,p_2) \mapsto \tilde H(x,p_1,p_2)$ be  a discrete Hamiltonian, assumed to satisfy the following properties:
\begin{enumerate}
	\item ($\mathbf{\tilde H_1}$) Monotonicity:  for every $x \in \TT$, $\tilde H$ is nonincreasing in $p_1$ and nondecreasing in $p_2$.
	\item  ($\mathbf{\tilde H_2}$)  Consistency:  for every $x \in \TT, p \in \RR$, $\tilde H(x,p,p) = H_0(x,p)$.
	\item  ($\mathbf{\tilde H_3}$)  Differentiability:  for every $x \in \TT$, $\tilde H$ is of class $\mathcal C^1$ in $p_1,p_2$
	\item  ($\mathbf{\tilde H_4}$)  Convexity:   for every $x \in \TT$, $(p_1,p_2) \mapsto \tilde H(x,p_1,p_2)$ is convex. 
\end{enumerate}

\begin{example}
\label{eq-quadratic-discrete}
For instance, if $H_0(x,p) = \frac{1}{2} |p|^2$, then one can take $\tilde H(x, p_1, p_2) = \frac{1}{2}|P_K(p_1,p_2)|^2$ where $P_K$ denotes the projection on $K = \RR_- \times \RR_+$.
\end{example}

\begin{remark}
  Similarly, for $d$-dimensional problems, the discrete Hamiltonians that we consider are real valued functions defined on $\TT^d \times (\RR^2)^d $.
\end{remark}

\paragraph{\textbf{Discrete HJB equation.} }
We consider the following discrete version of the HJB equation~\eqref{eq:PDE-system-MFG-HJB}:
\begin{subequations}\label{eq:discrete-HJB}
     \begin{empheq}[left=\empheqlbrace]{align}
     	\displaystyle
	\,\, &- (D_t U_{i})^{n} - \nu (\Delta_h U^{n})_{i}
	+ \tilde H(x_i, [\grad_h U^n]_i) 
	= f_0(x_i,M^{n+1}_i) \, , 
	&& i \in \{0,\dots,N_h\} \, , \,  n \in \{0, \dots, N_T-1\} \, ,
	\\
	\,\, &U^{n}_{0} = U^{n}_{N_h} \, ,  && n \in \{0, \dots,  N_T-1\} \, ,
	\\
	\,\, &U^{N_T}_{i} = \phi(M^{N_T}_i) \, ,  && i \in \{ 0, \dots , N_{h} \} \, .
	\end{empheq}
\end{subequations}

Note that it is an implicit scheme since the equation is backward in time.

\paragraph{\textbf{Discrete KFP equation.} } \label{sec:textbfd-kfp-equat}
To define an appropriate discretization of the KFP equation~\eqref{eq:PDE-system-MFG-KFP}, we consider the weak form. For a smooth test function $w \in \mathcal{C}^{\infty}([0,T] \times \TT)$, it involves, among other terms,  the expression
\begin{align}
	&- \int_{\TT} \partial_x \big( H_p(x, \partial_x u(t,x)) m(t,x)\big) w(t,x) dx
	=
	\int_{\TT} H_p(x, \partial_x u(t,x)) m(t,x) \, \partial_x w(t,x) dx  \, ,
	\label{eq:HJB-weak}
\end{align}
where we used an integration by parts and the periodic boundary conditions. In view of what precedes, it is quite natural to propose the following discrete version of the right hand side of \eqref{eq:HJB-weak}:
\begin{displaymath}
 h \sum_{i=0}^{N_h-1} M_i^{n+1} \left(  \tilde H_{p_1}(x_i, [\grad_h U^n]_i)  \frac { W^n_{i+1}-W^n_i} h+
    \tilde H_{p_2}(x_i, [\grad_h U^n]_i)  \frac { W^n_{i}-W^n_{i-1}} h   \right),
\end{displaymath}
and performing a discrete integration by parts, we obtain the discrete counterpart  of the left hand side of \eqref{eq:HJB-weak} as follows:
$ -\ds
  h \sum_{i=0}^{N_h-1}  	\cT_i(U^n , M^{n+1}) W_i^n$,
where $\cT_i$ is the following discrete transport operator:
\begin{align*}
	\cT_i(U, M)
	= \frac{1}{h} 
	&\Big( M_i  \tilde H_{p_1}(x_i, [\grad_h U]_i) 
		- M_{i-1} \tilde H_{p_1}(x_{i-1}, [\grad_h U]_{i-1})
		\\
		&\quad +
		M_{i+1} \tilde H_{p_2}(x_{i+1}, [\grad_h U]_{i+1})
		- M_i \tilde H_{p_2}(x_i, [\grad_h U]_i)
	\Big) \, .
\end{align*}

Then, for the discrete version of~\eqref{eq:PDE-system-MFG-KFP}, we consider
\begin{subequations}\label{eq:discrete-KFP}
     \begin{empheq}[left=\empheqlbrace]{align}
	\,\, &(D_t M_{i})^{n} - \nu (\Delta_h M^{n+1})_{i}
	- \cT_i(U^{n}, M^{n+1})
	= 0 \, , 
	&& i \in \{0,\dots,N_h\}, n \in \{0, \dots, N_T-1\} \, ,
	\label{eq:sysH-FP-scheme-discrete-edo}
	\\
	\,\, &M^{n}_{0} = M^{n}_{N_h} \, , &&n \in \{ 1, \dots, N_T\} \, , 
	\label{eq:sysH-FP-scheme-discrete-bc}
	\\
	\,\, &M^{0}_{i} = \bar m_0(x_i) \, , \,  && i \in \{0, \dots, N_h\} \, ,
	\label{eq:sysH-FP-scheme-discrete-initialc}
\end{empheq}
\end{subequations}
where, for example,
\begin{equation}
  \label{eq:1}
	\bar m_0(x_i) = \int_{|x - x_i| \le h/2} m_0(x) dx.
\end{equation}
Here again, the scheme is implicit since the equation is forward in time.

\begin{remark}[Structure of the discrete system]
The finite difference system~\eqref{eq:discrete-HJB}--\eqref{eq:discrete-KFP} preserves the structure of the PDE system~\eqref{eq:PDE-system-MFG} in the following sense: The operator $M \mapsto - \nu (\Delta_h M)_{i}
	- \cT_i(U, M)$
	is the adjoint of the linearization of the operator
	$U \mapsto - \nu (\Delta_h U)_{i}
	+ \tilde H(x_i, [\grad_h U]_i)$ since 
	\begin{align*}
		& \sum_{i} \cT_i(U, M) W_i
		= - \sum_{i} M_i  \left\langle \tilde H_p(x_i, [\grad_h U]_i) , [\grad_h W]_i \right\rangle \, .
	\end{align*}
\end{remark}

\paragraph{\bf Convergence results.}

Existence and uniqueness for the discrete system has been proved in~\cite[Theorems 6 and 7]{MR2679575}.
The monotonicity properties ensure that the grid function $M$ is nonnegative. By construction of $\cT$, the scheme preserves the total mass $h \sum_{i=0}^{N_h-1} M_i^n$. Note that there is no restriction on the time step since the scheme is implicit. Thus, this method may be used for long horizons and
the scheme can be very naturally adapted to ergodic MFGs, see~\cite{MR2679575}.

Furthermore, convergence results are available. A first type of convergence theorems see ~\cite{MR2679575,MR2888257,MR3097034} (in particular~\cite[Theorem 8]{MR2679575} for finite horizon problems) make the assumption that the MFG system of PDEs has a unique classical solution and strong versions of Lasry-Lions monotonicity assumptions, see~\cite{MR2269875,MR2271747,MR2295621}. Under such assumptions, 
the solution to the discrete system converges towards the classical solution as the grid parameters tend to zero.

In what follows, we discuss a second type of results that were obtained in~\cite{MR3452251}, namely, the  convergence of the solution of the discrete problem to weak solutions of the system of forward-backward PDEs.

\begin{theorem}\label{th:conv_weak} \cite[Theorem 3.1]{MR3452251}
  We make the following assumptions:
  \begin{itemize}
\item $\nu>0$;
\item $\phi(x,m)= u_T(x)$ where $u_T$ is a continuous function on $\TT^d$;
\item $m_0$  is a  nonnegative function in $L^\infty (\TT^d)$ such that $\int_{\TT^d} m_0(x)dx=1$;
\item $f_0$ is a continuous function on $\TT^d\times \R^+$, which is bounded from below;
\item The Hamiltonian $(x,p)\mapsto H_0(x,p)$ is assumed to be continuous, convex and  
 $\cC^1$ regular w.r.t.  $p$;
\item The discrete Hamiltonian $\tilde H$ satisfies  ($\mathbf{\tilde H_1}$)-($\mathbf{\tilde H_4}$) and
  the following  further assumption: \\
\begin{description}
\item ($\mathbf{\tilde H_5}$) \emph{growth conditions}: there exist positive constants $c_1, c_2,c_3,c_4$ such that
  \begin{eqnarray}
    \label{eq:27}
\langle \tilde H_q(x,q), q \rangle -\tilde H(x,q)&\ge& c_1 |\tilde H_q(x,q)|^2 - c_2,\\
\label{eq:31}
|\tilde H_q(x,q)|&\le &c_3 |q| + c_4.
  \end{eqnarray}
\end{description}
\end{itemize}
Let $(U^n), (M^n)$ be  a solution of the discrete system (\ref{eq:sysH-FP-scheme-discrete-edo})-(\ref{eq:sysH-FP-scheme-discrete-initialc})  (more precisely
of its $d$-dimensional counterpart).
Let $u_{h,\Delta t}$, $m_{h,\Delta t}$ be the piecewise constant functions which take  the values $U_{I}^{n+1} $ and $M_{I}^{n}$, respectively,
in  $(t_n,t_{n+1})\times \omega_I$, where $\omega_I$ is the $d$-dimensional cube centered at $x_I$ of side $h$ and $I$ is any multi-index in $\{0,\dots,N_h-1\}^d$. \\
Under the assumptions above, there exists a subsequence of $h$ and $\Delta t$ (not relabeled) and  functions $\tilde u$, $\tilde m$,
which belong to $L^\alpha(0,T;W^{1,\alpha}(\TT^d))$ for any $\alpha\in [1,\frac{d+2} {d+1})$, such that $u_{h,\Delta t} \to \tilde u$ and $m_{h,\Delta t} \to \tilde m$ in   $L^\beta(Q_T)$ for all $\beta\in [1,\frac {d+2} d )$,  and $(\tilde u, \tilde m)$ is a weak solution to the system
\begin{subequations}\label{eq:conv:0}
     \begin{empheq}[left=\empheqlbrace]{align}
    \displaystyle
    &-\frac{\partial \tilde u}{\partial t}(t,x)  - \nu \Delta \tilde u(t,x) + H_0\left( x, \grad \tilde u(t,x)\right) = f_0(x, \tilde m(t,x)), 
    &&\hbox{ in } [0,T) \times \TT^d,
    \label{eq:conv:1}
    \\
    \displaystyle
    &\frac{\partial \tilde m}{\partial t}(t,x)  - \nu \Delta \tilde m(t,x) - \div\left( \tilde m(t,\cdot) D_p H_0 \grad \tilde u(t,\cdot) \right)(x) = 0, 
    &&\hbox{ in } (0,T] \times \TT^d,
    \label{eq:conv:2}
    \\
    &\tilde u(T,x) = u_T( x), \qquad \tilde m(0,x) = m_0(x), 
    &&\hbox{ in } \TT^d,
    \label{eq:conv:3}
  \end{empheq}
\end{subequations}
in the following sense:
\begin{itemize}
\item[(i)] $H_0(\cdot, \nabla\tilde u) \in L^1(Q_T)$,  $\tilde mf_0(\cdot,\tilde m)\in L^1(Q_T)$,
  $\tilde m[D_pH_0(\cdot ,\nabla\tilde u)\cdot \nabla\tilde u-H_0(\cdot,\nabla\tilde u)]\in L^1(Q_T)$
\item[(ii)] $(\tilde u, \tilde m)$ satisfies (\ref{eq:conv:1})-(\ref{eq:conv:2})  in the sense of distributions
\item[(iii)]  $\;\tilde u, \tilde m \in C^0([0,T]; L^1(\TT^d))$ and $\tilde u|_{t=T}= u_T$, $\tilde m|_{t=0}=m_0$ .
\end{itemize}
\end{theorem}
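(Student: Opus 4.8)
The plan is to follow the classical compactness strategy for forward–backward MFG systems, adapted to the discrete setting. First I would establish a priori estimates uniform in the grid parameters. The starting point is the discrete energy identity obtained by testing the discrete HJB equation~\eqref{eq:discrete-HJB} against $M^{n+1}$ and the discrete KFP equation~\eqref{eq:discrete-KFP} against $U^n$, then summing over $n$ and using the adjunction structure recalled in the Remark on the structure of the discrete system (namely $\sum_i \cT_i(U,M)W_i = -\sum_i M_i\langle \tilde H_p(x_i,[\grad_h U]_i),[\grad_h W]_i\rangle$). After the discrete integrations by parts the Hamiltonian contributions combine into $h\Delta t\sum_{n,i} M_i^{n+1}\bigl(\langle \tilde H_q(x_i,[\grad_h U^n]_i),[\grad_h U^n]_i\rangle - \tilde H(x_i,[\grad_h U^n]_i)\bigr)$, and the Laplacian terms cancel. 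Using the coercivity assumption~\eqref{eq:27}, the lower bound on $f_0$, and the boundary/initial–terminal conditions, this yields a bound on $h\Delta t\sum_{n,i} M_i^{n+1}|\tilde H_q(x_i,[\grad_h U^n]_i)|^2$ together with a bound on $h\Delta t\sum_{n,i}M_i^{n+1}f_0(x_i,M_i^{n+1})$; note the discrete maximum principle / monotonicity already gives $M_i^n\ge 0$ and conservation of mass $h\sum_i M_i^n=1$.

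Next I would upgrade these weighted estimates into genuine Lebesgue bounds on $\grad_h U$ and on $M$. This is the standard "$\log$‑nonlinearity" type argument: writing $M_i^{n+1}|\grad_h U^n|_i \le (M_i^{n+1})^{1/2}\,(M_i^{n+1})^{1/2}|\tilde H_q|_i / (\text{const})$ via~\eqref{eq:31}, Hölder in the discrete $(n,i)$ measure, and interpolation between the mass bound and the weighted $L^2$ bound, produces a uniform bound on $\|\grad_h U\|_{L^\alpha}$ and on $\|M\|_{L^{(d+2)/d}}$‑type quantities for the exponents $\alpha\in[1,\frac{d+2}{d+1})$ and $\beta\in[1,\frac{d+2}{d})$ stated in the theorem; the parabolic scaling here is exactly the one giving those critical exponents. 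Simultaneously one needs a bound on $U$ itself in $L^\beta$, which follows from a discrete comparison principle for the HJB equation (using $f_0$ bounded below and continuity of $H_0$, one bounds $U$ from below; the upper bound comes from the equation once $\grad_h U$ is controlled) together with the weak parabolic regularity just obtained. One also extracts uniform bounds on discrete time‑derivatives of $M$ in a negative Sobolev norm, so that $m_{h,\Delta t}$ is compact in $L^\beta(Q_T)$ by an Aubin–Lions–type / discrete compactness argument; for $u_{h,\Delta t}$ a similar argument (or a direct estimate using the equation) gives compactness in $L^\beta(Q_T)$.

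Then I would pass to the limit. Along a subsequence $u_{h,\Delta t}\to\tilde u$, $m_{h,\Delta t}\to\tilde m$ strongly in $L^\beta(Q_T)$ and $\grad_h u_{h,\Delta t}\rightharpoonup\grad\tilde u$ weakly in $L^\alpha$, with $\tilde u,\tilde m\in L^\alpha(0,T;W^{1,\alpha})$. Consistency ($\mathbf{\tilde H_2}$), $\cC^1$‑regularity ($\mathbf{\tilde H_3}$) and convexity ($\mathbf{\tilde H_4}$) of $\tilde H$ let me identify the limits of the nonlinear terms: the convexity of $\tilde H$ together with weak convergence of the gradients and the quantitative bound from~\eqref{eq:27} (a weak lower‑semicontinuity / "div–curl"‑flavoured argument) forces $H_0(\cdot,\grad\tilde u)\in L^1(Q_T)$ and allows passing to the limit in the discrete HJB equation in the sense of distributions; the weighted bounds give $\tilde m f_0(\cdot,\tilde m)\in L^1$ and $\tilde m[D_pH_0(\cdot,\grad\tilde u)\cdot\grad\tilde u - H_0(\cdot,\grad\tilde u)]\in L^1$, i.e. item~(i). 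For the KFP equation one uses the adjunction identity and strong $L^\beta$ convergence of $m_{h,\Delta t}$ against the weak convergence of $\tilde H_p([\grad_h U])$ to $D_pH_0(\cdot,\grad\tilde u)$ to pass to the limit in~\eqref{eq:discrete-KFP}, yielding~\eqref{eq:conv:2} distributionally. Finally the discrete time‑derivative bounds give $\tilde u,\tilde m\in C^0([0,T];L^1(\TT^d))$, and careful bookkeeping of the terminal condition $U^{N_T}=\phi(M^{N_T})=u_T$ and the initial condition $M^0=\bar m_0(x_i)$ (with $\bar m_0\to m_0$ in $L^1$) gives the trace conditions in~(iii).

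The main obstacle is the second step: obtaining Lebesgue (not merely $M$‑weighted) bounds on $\grad_h U$ and the correct integrability of $M$ uniformly in the grid. The coupling is two‑sided — one needs control of $\grad_h U$ to control $M$ via the transport term, and control of $M$ to exploit the weighted estimate — so the bootstrap has to be set up carefully, mimicking at the discrete level the energy/interpolation argument of the continuous theory (à la Porretta) while respecting the discrete integration‑by‑parts identities and the loss of regularity caused by the piecewise‑constant reconstruction. Establishing enough compactness to identify $H_0(\cdot,\grad\tilde u)$ as the weak limit of $\tilde H(x_i,[\grad_h U^n]_i)$ — rather than merely a weak‑* limit that could be a measure — is the delicate analytic point, and is where assumptions~\eqref{eq:27}–\eqref{eq:31} together with convexity do the essential work.
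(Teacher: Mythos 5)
Your global plan does match the strategy of~\cite{MR3452251}: a crossed energy identity exploiting the adjoint structure of the scheme, coercivity from $(\mathbf{\tilde H_5})$, discrete Aubin--Lions compactness, and identification of the nonlinear limits using convexity and consistency. But the step you yourself single out as the obstacle is where your outline fails, and the gap is genuine. Inequality~\eqref{eq:31} bounds $|\tilde H_q(x,q)|$ \emph{from above} by $c_3|q|+c_4$; it does not bound $|q|$ in terms of $|\tilde H_q|$, so the manipulation $M_i^{n+1}|\nabla_h U^n|_i \le (M_i^{n+1})^{1/2}(M_i^{n+1})^{1/2}|\tilde H_q|_i/(\mathrm{const})$ uses the inequality in the wrong direction and produces nothing. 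More importantly, H\"older together with interpolation between the mass bound and the $M$-weighted $L^2$ bound cannot give the exponents $\alpha < (d+2)/(d+1)$ and $\beta < (d+2)/d$: these are the critical exponents of Boccardo--Gallou\"et/Stampacchia parabolic regularity for equations with $L^1$ data, and the engine of the proof in~\cite{MR3452251} is a set of \emph{discrete} $L^1$-data parabolic estimates (established by truncation and duality against solutions of the discrete heat equation), not a H\"older or interpolation inequality.

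The way the estimates actually close is roughly as follows, and this is what your outline omits. First, a uniform lower bound $U^n_I \ge -C$ is proved by discrete comparison with a constant-in-space subsolution, using $u_T$ bounded, $f_0$ bounded below, the monotonicity $(\mathbf{\tilde H_1})$ and the bound $\tilde H(x,0,\dots,0)\le c_2$ which follows from~\eqref{eq:27} at $q=0$. Second, Cauchy--Schwarz against $h^d\sum_I M^n_I = 1$ upgrades the weighted bound $h^d\Delta t\sum_{n,I} M^{n+1}_I|\tilde H_q|^2\le C$ to a uniform $L^1(Q_T)$ bound on the Fokker--Planck flux $M\tilde H_p$; the discrete $L^1$-data lemma for the heat operator with divergence-form source then gives $M$ bounded in $L^\beta$ and $\nabla_h M$ in $L^\alpha$ for the stated exponents. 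Third, combining the crossed energy identity, the spatially-summed discrete HJB equation, and the lower bound on $U$ yields a uniform $L^1(Q_T)$ bound on the right-hand side of the HJB equation, and the same type of discrete lemma applied to $U$ then gives $\nabla_h U\in L^\alpha$ uniformly. Only once these three ingredients are in place do the compactness and limit-passage steps, which you describe correctly at the level of strategy, have uniform estimates to stand on.
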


\begin{remark}\label{sec:convergence-results}
  \begin{itemize}
  \item Theorem \ref{th:conv_weak} does not suppose the existence of a (weak) solution to
    (\ref{eq:conv:1})-(\ref{eq:conv:3}), nor  Lasry-Lions monotonicity assumptions, see~\cite{MR2269875,MR2271747,MR2295621}. It can thus be seen as an alternative proof of existence of weak solutions of (\ref{eq:conv:1})-(\ref{eq:conv:3}).
  \item The assumption made on the coupling cost $f_0$ is very general. 
  \item If uniqueness holds for the weak solutions of  (\ref{eq:conv:1})-(\ref{eq:conv:3}), which is the case if Lasry-Lions monotonicity assumptions hold, then the whole sequence of discrete solutions converges.
    \item
      It would not be difficult to generalize Theorem \ref{th:conv_weak} to the case when the terminal condition at $T$ is of the form (\ref{eq:PDE-system-MFG-IT}).
    \item Similarly, non-local coupling cost of the type $F[m] (x)$ could be addressed,
where for instance, $F$ maps bounded measures on $\TT^d$ to functions in $C^0(\TT^d)$.
  \end{itemize}
 
\end{remark}

\paragraph{\bf An example illustrating the robustness of the scheme in the deterministic limit}
To illustrate the robustness of the finite difference scheme described above, let us consider the following ergodic problem 
\begin{displaymath}
\left\{  \begin{array}[c]{rcl}
    \ds    \rho - \nu \Delta u    + H(x,\nabla  u) & =& F[m](x), \quad \hbox{in }\TT^2, \\
    \ds -\nu \Delta  m -\diver\left(  m\frac {\partial H} {\partial p} (x,\nabla  u)\right)  &=& 0,\quad \hbox{in }\TT^2,
  \end{array}\right.
  \end{displaymath}
where the ergodic constant $\rho\in \R$ is also an unknown of the problem, and
\begin{displaymath}
  \begin{array}[c]{l}
    \nu =0.001,\quad \quad H(x,p)= \sin(2\pi x_2)+\sin(2\pi x_1)+\cos(4\pi x_1)+ |p|^{3/2},
\\ F[m](x)= \left((1-\Delta)^{-1} (1-\Delta)^{-1} m\right)  (x).
  \end{array}
\end{displaymath}
Note the very small value of the viscosity parameter $\nu$.
Since the coupling term is a smoothing nonlocal operator, nothing prevents $m$ from becoming singular in the deterministic limit when $\nu\to 0$.
In Figure~\ref{fig:exemple1}, we display the solution. We see that the value function tends to be singular (only Lipschitz continuous) and that the density of the distribution 
of states tends to be  the sum of two Dirac masses located at the minima of the value function. The monotonicity of the scheme has made it possible to capture the singularities of the solution.
\begin{figure}[H]
  \centering
 \includegraphics[width=8.0cm]{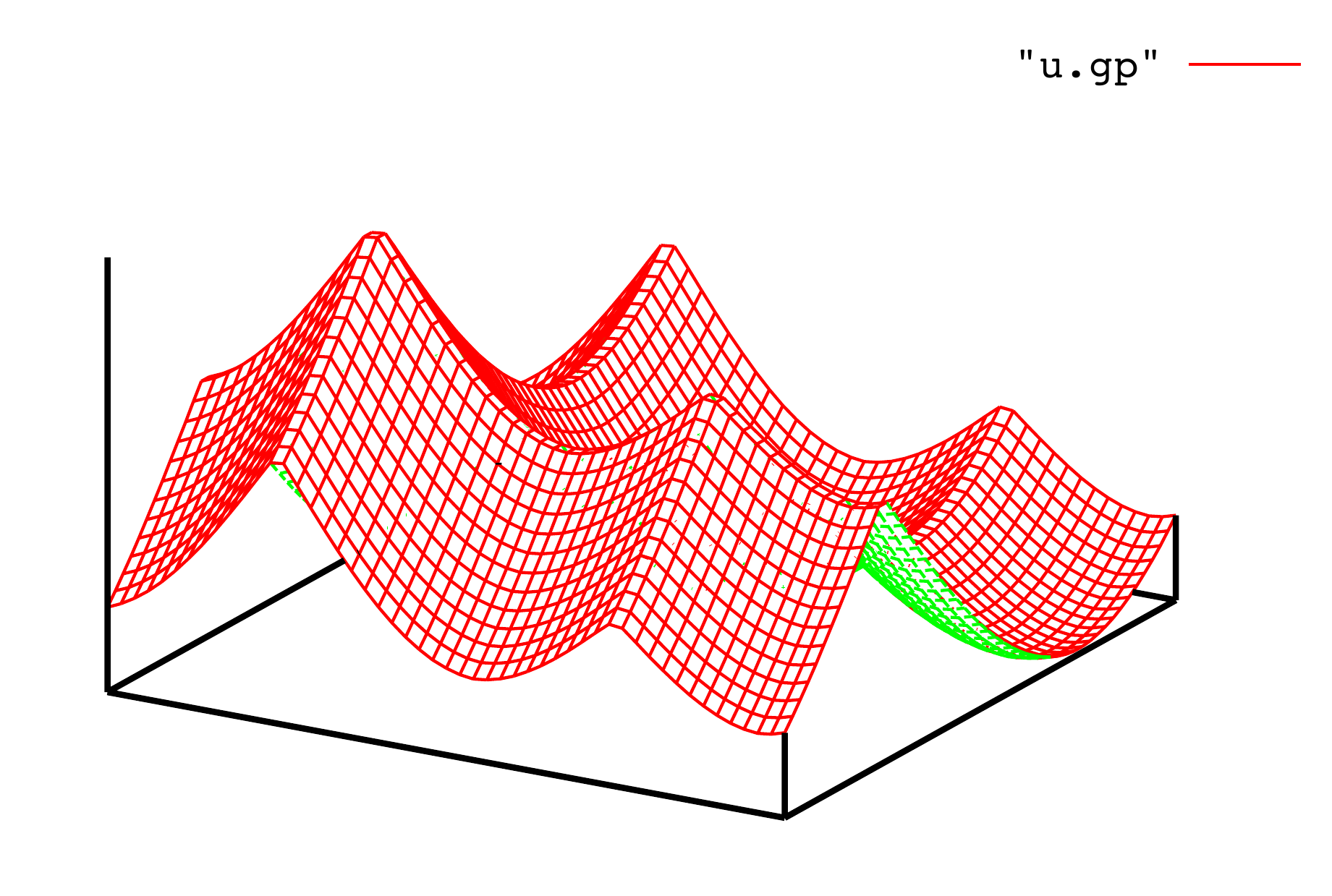} \includegraphics[width=8.0cm]{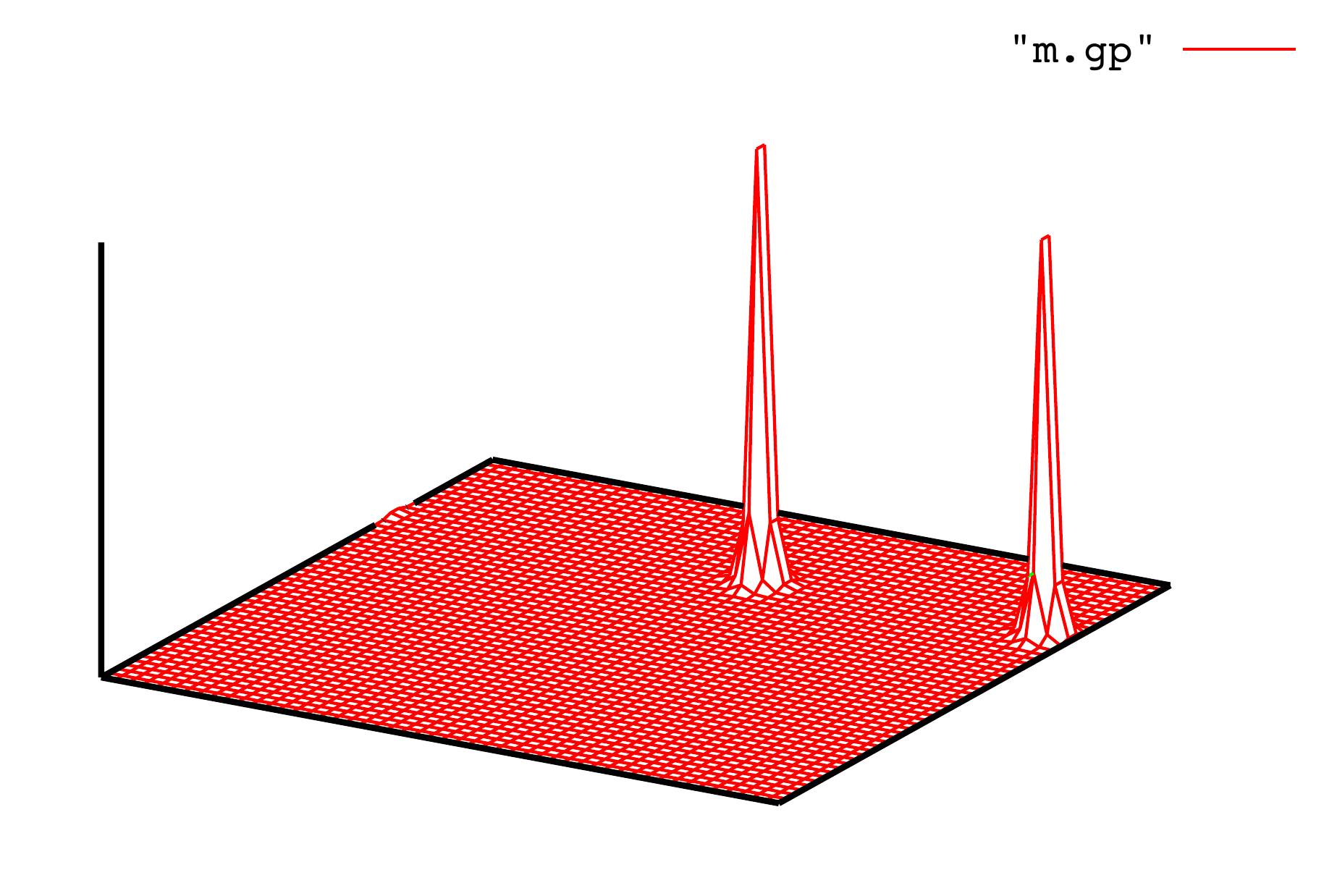}
  \caption{Left: the value function. Right: the distribution of states}
  \label{fig:exemple1}
\end{figure}

\section{Variational aspects of some MFGs and numerical applications}
\label{sec:varMFG}

In this section, we restrict our attention to the setting described in the Example~\ref{ex:special-case}.
We assume  that $L_0$ is smooth, strictly convex in the variable $\gamma$ and such that there exists positive constants $c$ and $C$, and $r>1$ such that
\begin{equation}
  \label{eq:var:0}
-c+ \frac 1 C |\gamma|^r \le L_0(x,\gamma) \le c+ C|\gamma|^r, \quad \forall x\in \TT^d, \gamma\in \RR^d,
\end{equation}
 and that $H_0$ is smooth and has a superlinear growth in the gradient variable.
  We also assume that the function $f_0$ is smooth and non-decreasing with respect to $m$. We make the same assumption on $\phi$.

\subsection{An optimal control  problem  driven by a PDE }\label{sec:variational-problem-2}
Let us consider the functions
\begin{equation}\label{eq:def-F-variational}
	F: \TT^d \times \RR \to \RR, \quad (x,m) \mapsto F(x,m) = 
	\begin{cases}
		\ds
		\,\, \int_0^m f_0(x,s) ds, &\hbox{ if } m \ge 0,
		\\
		\,\, +\infty, &\hbox{ if } m < 0,
	\end{cases}
\end{equation}
and
\begin{equation}\label{eq:def-Phi-variational}
	\Phi: \TT^d \times \RR \to \RR, \quad (x,m) \mapsto \Phi(x,m) = 
	\begin{cases}
		\ds
		\,\, \int_0^m \phi(x,s) ds, &\hbox{ if } m \ge 0,
		\\
		\,\, +\infty, &\hbox{ if } m < 0.
	\end{cases}
\end{equation}
Let us also introduce
\begin{equation}
\label{eq:def-L}
	L: \TT^d \times \RR \times \RR^d \to \RR \cup \{+\infty\}, \quad (x,m,w) \mapsto L(x,m,w) = 
	\begin{cases}
		\ds
		\,\, m L_0\left(x,\frac{w}{m}\right), &\hbox{ if } m > 0,
		\\
		\,\, 0, &\hbox{ if } m=0, \hbox{ and } w= 0,
		\\
		\,\, +\infty, &\hbox{ otherwise.}
	\end{cases}
\end{equation}
Note that since $f_0(x,\cdot)$ is non-decreasing,  $F(x,\cdot)$ is convex and l.s.c. with respect to $m$. Likewise, since $\phi(x,\cdot)$ is non-decreasing, then $\Phi(x,\cdot)$ is convex and l.s.c. with respect to $m$. Moreover,
it can be checked that the assumptions made on $L_0$ imply that  $L(x,\cdot,\cdot)$ is convex and l.s.c. with respect to $(m,w)$.

We now consider the following minimization problem expressed in a formal way: Minimize, when it is possible, $\cJ$ 
defined by
\begin{equation}
\label{eq:variational-form-J}
	(m,w) \mapsto \cJ(m,w) = \int_0^T \int_{\TT^d} \left[ L(x, m(t,x), w(t,x)) + F(x, m(t,x)) \right] dx dt + \int_{\TT^d} \Phi(x, m(T,x)) dx
\end{equation}
on pairs $(m,w)$
such that $m\ge 0$ and 
\begin{equation}
\label{eq:variational-form-linearcons}
\begin{cases}
	\displaystyle
	\,\, \frac{\partial m}{\partial t}  - \nu \Delta m + \div_x w = 0, 
	&\hbox{ in } (0,T] \times \TT^d ,
	\\
	\,\, m|_{t=0} = m_0, 
\end{cases}
\end{equation}
holds in the sense of distributions.

Note that, thanks to our assumptions and to the structure in terms of the variables $(m,w)$, this problem is the minimization of a convex functional under a linear constraint. The key insight is that the optimality conditions of this minimization problem yield a weak version of the MFG system of PDEs~\eqref{eq:PDE-system-MFG}.

\begin{remark}\label{sec:variational-problem-4}
If  $(m,w)\in L^1(Q_T)\times L^1(Q_T; \R^d)$ is such that $\int_0^T \int_{\TT^d}  L (x,m(t,x), w(t,x)) dx dt <+\infty$, then it is possible to write that $w=m \gamma$ with $\int_0^T \int_{\TT^d}  m(t,x)|\gamma(t,x)|^r  dx dt <+\infty$ thanks to the assumptions on $L_0$. From this piece of information and (\ref{eq:variational-form-linearcons}), we deduce that  $t\mapsto m(t)$ for $t<0$  is H{\"o}lder continuous a.e. for the weak$*$ topology of $\cP(\TT^d)$, see~\cite[Lemma 3.1]{MR3399179}.
 This gives a meaning to $\int_{\TT^d} \Phi(x, m(T,x)) dx$ in the case when $\Phi(x, m(T,x))  = u_T(x)m(T,x)$ for a smooth function $u_T(x)$. Therefore, at least in this case, the minimization problem   is defined rigorously in $L^1(Q_T)\times L^1(Q_T; \R^d)$.  
\end{remark}

\begin{remark}\label{sec:variational-problem-1}
Although we consider here the case of a non-degenerate diffusion, similar problems have been considered in the  first-order case or with degenerate diffusion in~\cite{MR3399179,MR3358627}.
\end{remark}

\begin{remark}\label{sec:variational-problem-12}
Note that, in general, the system of forward-backward PDEs cannot be seen as the optimality conditions of a minimization problem. This is never the case if the Hamiltonian does not depend separately on $m$ and $\nabla u$.
\end{remark}

\subsection{Discrete version of the PDE driven optimal control problem}\label{sec:discr-vers-vari}
We turn our attention to a discrete version of the minimization problem introduced above. To alleviate notation, we restrict our attention to the one dimensional case, i.e., $d=1$. Let us introduce the following spaces, respectively for the discrete counterpart of $m,w,$ and $u$:
$$
	\cM = \RR^{(N_T+1) \times N_h}, 
	\qquad 
	\cW = (\RR^2)^{N_T \times N_h}, 
	\qquad
	\cU = \RR^{N_T \times N_h}.
$$
Note that at each space-time grid point, we will have two variables for the value of $w$, which will be useful to define an upwind scheme.

We consider a discrete Hamiltonian $\tilde H$ satisfying  the assumptions ($\mathbf{\tilde H_1}$)-($\mathbf{\tilde H_4}$), and introduce its convex conjugate w.r.t. the $p$ variable:
\begin{equation}
\label{eq:var:1}  
	\tilde H^*: \TT^d \times (\RR^2)^d \to \RR \cup \{+\infty\},
	(x,\gamma) \mapsto 
	\tilde H^*(x, \gamma)
	=
	\max_{p \in \RR^2} \left\{ \langle \gamma , p \rangle - \tilde H(x, p) \right\}.
\end{equation}

\begin{example}\label{sec:discr-vers-vari-2}
	If $\tilde H(x, p_1, p_2) = \tfrac{1}{2} |P_K(p_1,p_2)|^2$ with $K = \RR_- \times \RR_+$, then
	$$
		\tilde H^*(x, \gamma_1, \gamma_2)
		=
		\begin{cases}
			\ds
			\,\, \tfrac{1}{2}|\gamma|^2 , &\hbox{ if } \gamma \in K,
			\\
			\,\, +\infty, &\hbox{ otherwise.}
		\end{cases}
	$$
\end{example}
A discrete counterpart $\Theta$ to the functional $\cJ$ introduced in~\eqref{eq:variational-form-J} can be defined as
\begin{equation}
\label{eq:var:2}  
	\Theta:
	\cM \times \cW \to \RR, 
	\qquad
	(M,W) \mapsto
	\sum_{n=1}^{N_T} \sum_{i=0}^{N_h-1} \left[ \tilde L(x_i, M^n_i, W^{n-1}_i) + F(x_i, M^n_i) \right] + \frac{1}{\Delta t}  \sum_{i=0}^{N_h-1}  \Phi(x_i, M^{N_T}_i),
      \end{equation}
      where $\tilde L$ is a discrete version of $L$ introduced in~\eqref{eq:def-L} defined as
\begin{equation}
\label{eq:var:3}  
	\tilde L:
	\TT \times \RR \times \RR^2 \to \RR \cup \{+\infty\},
	\qquad
	 (x,m,w) \mapsto \tilde L(x,m,w)
	 =
	\begin{cases}
		\ds
		\,\, m \tilde H^*\left(x,-\frac{w}{m}\right), &\hbox{ if } m > 0 \hbox{ and } w \in K,
		\\
		\,\, 0, &\hbox{ if } m=0, \hbox{ and } w= 0,
		\\
		\,\, +\infty, &\hbox{ otherwise.}
	\end{cases}
      \end{equation}
      Furthermore, a discrete version of the linear constraint~\eqref{eq:variational-form-linearcons} can be written as
      \begin{equation}
        \label{eq:var:5}  
	\Sigma(M,W) = (0_\cU, \bar M^0)
      \end{equation}
      where $0_\cU \in \cU$ is the vector with $0$ on all coordinates, $\bar M^0 = (\bar m_0(x_0), \dots, \bar m_0(x_{N_h-1})) \in \RR^{N_h}$, see (\ref{eq:1}) for the definition of $\bar m_0$, and 
      \begin{equation}
\label{eq:var:6}  
	\Sigma: \cM \times \cW \to \cU \times \RR^{N_h},
	\qquad
	(M,W) \mapsto \Sigma(M,W) = (\Lambda(M,W), M^0),
      \end{equation}
      with $\Lambda(M,W) = AM + BW$, $A$ and $B$ being discrete versions of respectively the heat operator and the divergence operator, defined as follows:
\begin{equation}
\label{eq:def-op-A}
	A: \cM \to \cU,
	\qquad
	(AM)^{n}_i = \frac{M^{n+1}_i - M^{n}_i}{\Delta t} - \nu (\Delta_h M^{n+1})_i, \qquad 0 \le n < N_T, 0 \le i < N_h,
\end{equation}
and
\begin{equation}
\label{eq:def-op-B}
	B: \cW \to \cU,
	\qquad
	(BW)^{n}_i = \frac{W^{n}_{i+1,2} - W^{n}_{i,2}}{h} + \frac{W^{n}_{i,1} - W^{n}_{i-1,1}}{h} 
              \qquad 0 \le n \le N_T, 0 \le i < N_h.
\end{equation}
We define the transposed operators $A^*: \cU \to \cM$ and $B^*: \cU \to \cW$, i.e. such that
$$
\langle M, A^*U \rangle = \langle A M, U \rangle,
\qquad
\langle W, B^*U \rangle = \langle B W, U \rangle.
$$
This yields
\begin{equation}
  \label{eq:def-op-A*}
  (A^* U)^{n}_i = \left\{
    \begin{array}[c]{ll}
      \ds  -\frac{U^{0}_i }{\Delta t} , \qquad &n =0,  \; 0\le i < N_h,\\ \\
      \ds       \frac{U^{n-1}_i - U^{n}_i}{\Delta t} - \nu (\Delta_h U^{n-1})_i, \qquad &0 < n < N_T-1,  \; 0\le i < N_h,\\ \\
       \ds       \frac{U^{N_T-1}_i }{\Delta t} - \nu (\Delta_h U^{N_T-1})_i, \qquad & n = N_T-1, \; 0 \le i < N_h,
    \end{array}
\right.
\end{equation}
and
\begin{equation}
\label{eq:var:7}  
	(B^* U)^n_i
	=- [\nabla_h U^n]_i
	= -\left( \frac{U^n_{i+1} - U^n_{i}}{h}, \frac{U^n_{i} - U^n_{i-1}}{h} \right),
	\qquad
	0 \le n < N_T, 0 \le i < N_h.
      \end{equation}
      This implies that 
\begin{equation}
\label{eq:var:8}  
	\Im(B) = \Ker(B^*)^\bot = \left\{ U \in \cU \,:\, \forall \, 0 \le n < N_T, \sum_i U^n_i = 0 \right\}.
      \end{equation}
      
The discrete counterpart of the variational problem (\ref{eq:variational-form-J})-(\ref{eq:variational-form-linearcons}) is therefore
\begin{equation}
\label{eq:discrete-pb-short}
	\inf_{\substack{(M,W) \in \cM \times \cW, \\ \Sigma(M,W) = (0_\cU, \bar M^0)}} \Theta(M,W).
\end{equation}

The following results provide a constraint qualification property and the existence of a minimizer.  See e.g.~\cite[Theorem 3.1]{BricenoAriasetalCEMRACS2017} for more details on a special case (see also~\cite[Section 6]{MR3135339} and~\cite[Theorem 2.1]{MR3772008}  for similar results respectively in the context of the planning problem and in the context of an ergodic MFG).

\begin{proposition}(Constraint qualification)\label{sec:discr-vers-vari-1}
	There exists a pair $(\tilde M, \tilde W) \in \cM \times \cW$ such that
	$$
		\Theta(\tilde M, \tilde W) < \infty,
		\qquad
		\hbox{ and } 
		\qquad 
		\Sigma(\tilde M, \tilde W) = (0_\cU, \bar M^0).
	$$
\end{proposition}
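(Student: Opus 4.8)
The plan is to exhibit a single explicit feasible pair with finite energy; no variational argument is needed. I would take $\tilde W = 0 \in \cW$ (a generic nonzero choice of $W$ would drive $\tilde M$ negative and hence outside $\mathrm{dom}\,\Theta$), and let $\tilde M \in \cM$ be the discrete heat flow started from $\bar M^0$: set $\tilde M^0 = \bar M^0$ and, for $n = 0, \dots, N_T-1$, let $\tilde M^{n+1}$ be the solution of $(\Id - \nu\,\Delta t\, \Delta_h)\, \tilde M^{n+1} = \tilde M^n$. This recursion is well posed because $-\Delta_h$ is symmetric and positive semidefinite on $\RR^{N_h}$ — discrete summation by parts on the torus gives $\langle -\Delta_h V, V\rangle = \tfrac1{h^2}\sum_i (V_{i+1}-V_i)^2 \ge 0$ — so $\Id - \nu\,\Delta t\,\Delta_h$ is symmetric positive definite, hence invertible.

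By construction $(A\tilde M)^n_i = \tfrac{\tilde M^{n+1}_i - \tilde M^n_i}{\Delta t} - \nu(\Delta_h \tilde M^{n+1})_i = 0$ for every $n,i$, and $\tilde M^0 = \bar M^0$; since $B\tilde W = 0$ too, $\Sigma(\tilde M, \tilde W) = (A\tilde M + B\tilde W,\, \tilde M^0) = (0_\cU, \bar M^0)$, so the linear constraint holds. Moreover $\tilde M \ge 0$ by the discrete maximum principle: $\bar M^0 \ge 0$, and if $\tilde M^n \ge 0$, then choosing $i^\star$ where $\tilde M^{n+1}$ is minimal gives $(\Delta_h \tilde M^{n+1})_{i^\star}\ge 0$, hence $\tilde M^{n+1}_{i^\star} = \tilde M^n_{i^\star} + \nu\,\Delta t\,(\Delta_h\tilde M^{n+1})_{i^\star} \ge 0$, so $\tilde M^{n+1}\ge0$ and one concludes by induction. (The same computation in strong form shows $\tilde M^n > 0$ for $n\ge1$, since $\bar M^0 \not\equiv 0$ because $h\sum_i \bar M^0_i = \int_{\TT^d} m_0 = 1$ and the discrete torus is connected.)

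Finally I would check $\Theta(\tilde M, \tilde W) < \infty$. Every $\tilde M^n_i$ is a finite nonnegative number, so $F(x_i, \tilde M^n_i) = \int_0^{\tilde M^n_i} f_0(x_i,s)\,ds$ and $\Phi(x_i, \tilde M^{N_T}_i) = \int_0^{\tilde M^{N_T}_i} \phi(x_i,s)\,ds$ are finite, $f_0$ and $\phi$ being continuous. For the Lagrangian terms, $\tilde W^{n-1}_i = 0 \in K$, so $\tilde L(x_i, \tilde M^n_i, 0)$ is $0$ when $\tilde M^n_i = 0$ and is $\tilde M^n_i\,\tilde H^*(x_i, 0)$ when $\tilde M^n_i > 0$, where $\tilde H^*(x_i, 0) = -\min_{p\in\RR^2}\tilde H(x_i,p)$ is finite under the standing assumptions on $\tilde H$ (it equals $0$ for the discrete Hamiltonian of Example~\ref{eq-quadratic-discrete}). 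Hence $\Theta(\tilde M, \tilde W)$ is a finite sum of finite terms. The only genuinely structural point here is the finiteness of $\tilde H^*(x_i,0)$ — equivalently, that $\tilde H(x_i,\cdot)$ is bounded below — and I expect this, rather than the construction itself, to be the point requiring care; everything else is elementary linear algebra and the discrete maximum principle. If the duality theory used afterwards requires $(\tilde M,\tilde W)$ to lie in the relative interior of $\mathrm{dom}\,\Theta$ rather than merely in $\mathrm{dom}\,\Theta$, one repeats the construction with a small drift $\tilde W$ whose entries lie in $\mathrm{int}(K)$: the equations $A\tilde M = -B\tilde W$, $\tilde M^0 = \bar M^0$ still determine $\tilde M$ uniquely, and for $\|\tilde W\|$ small the components $\tilde M^n$ with $n\ge1$ remain strictly positive by continuity.
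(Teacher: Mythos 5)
Your construction — take $\tilde W = 0$ and let $\tilde M$ be the implicit discrete heat flow started from $\bar M^0$ — is the standard argument and is the one behind the references the paper cites for this proposition; your verification is correct: $A\tilde M=0$ and $B\tilde W=0$ give the constraint, the implicit discrete maximum principle gives $\tilde M^n\ge 0$ (indeed $>0$ for $n\ge1$), so all the $F$ and $\Phi$ terms are finite, and the Lagrangian terms reduce to $\tilde M^n_i\,\tilde H^*(x_i,0)$. You are also right to isolate the finiteness of $\tilde H^*(x_i,0)$, equivalently boundedness from below of $\tilde H(x_i,\cdot)$, as the one genuinely structural hypothesis: it does \emph{not} follow from ($\mathbf{\tilde H_1}$)--($\mathbf{\tilde H_4}$) together with consistency with a coercive $H_0$. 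For instance, adding a linear term $\alpha(p_2-p_1)$, $\alpha>0$, to the quadratic example $\tfrac12|P_K(p_1,p_2)|^2$ keeps all four properties and keeps $\tilde H(x,p,p)=\tfrac12p^2$, yet drives $\tilde H$ to $-\infty$ along $p_1=-p_2\to+\infty$, so $\tilde H^*(x,0)=+\infty$. Hence $0\in\mathrm{dom}\,\tilde H^*(x,\cdot)$ must be read as an implicit standing assumption, automatically satisfied by the Godunov-type discretisations where $\tilde H\ge 0$. One small sign caution on your closing remark about relative interiors (relevant for the Fenchel--Rockafellar step that follows the proposition): the requirement is that $-\tilde W^{n-1}_i/\tilde M^n_i$ lie in $\mathrm{ri}\bigl(\mathrm{dom}\,\tilde H^*(x_i,\cdot)\bigr)$, so the small nonzero drift $\tilde W$ should be chosen with that sign; the paper's own convention relating $K$ and $\mathrm{dom}\,\tilde H^*$ in the definition of $\tilde L$ is slightly slippery on this point, so it is worth being explicit.
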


\begin{theorem}\label{sec:discr-vers-vari-3}
There exists a minimizer $(M,W) \in \cM \times \cW$ of~\eqref{eq:discrete-pb-short}. Moreover, if $\nu>0$, $M^n_i >0$ for all $i \in \{0, \dots, N_h\}$ and all $n>0$.
\end{theorem}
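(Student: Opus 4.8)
\noindent\emph{Proof strategy.} The statement has two parts: existence of a minimizer, and strict positivity of $M$ when $\nu>0$.

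\emph{Existence.} I would argue by the direct method. By Proposition~\ref{sec:discr-vers-vari-1} the admissible set $\cF:=\{(M,W)\in\cM\times\cW:\Sigma(M,W)=(0_\cU,\bar M^0)\}$ is a nonempty affine subspace on which $\Theta$ is finite, so $\mu:=\inf_\cF\Theta<+\infty$. First I would check that $\Theta$ is bounded below on $\cF$: on $\{\Theta<+\infty\}$ one has $M\ge 0$ and $W\in K$ by~\eqref{eq:var:3}, and applying $h\sum_i$ to $(AM+BW)^n_i=0$ — using that $\Delta_h$ and $B$ have vanishing column sums by periodicity — shows $n\mapsto h\sum_i M^n_i$ is constant, hence equal to $h\sum_i\bar m_0(x_i)=\int_{\TT}m_0=1$, so $0\le M^n_i\le 1/h$ on $\cF\cap\{\Theta<+\infty\}$; since $f_0(x,\cdot),\phi(x,\cdot)$ are non-decreasing and $\tilde H^*(x,\cdot)\ge-\tilde H(x,0)=-H_0(x,0)$, the three integrands in~\eqref{eq:var:2} are bounded below by affine functions of $m$, and the mass bound makes $\Theta$ bounded below on $\cF$. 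Then, along a minimizing sequence $(M_k,W_k)_k\subset\cF$, the bound $M^n_i\le 1/h$ gives precompactness of $(M_k)_k$; and since $\tilde H(x,\cdot)$ is finite-valued, $\tilde H^*(x,\cdot)$ is superlinear (one has $\tilde H^*(x,\gamma)\ge R|\gamma|-\max_{x',|p|\le R}\tilde H(x',p)$ for every $R>0$), so $\tilde L(x_i,m,w)=m\tilde H^*(x_i,-w/m)\ge R|w|-mc_R$ is superlinearly coercive in $w$ uniformly for $m\le1/h$, and the uniform bound on $\sum_{n,i}\tilde L$ along the sequence forces $(W_k)_k$ bounded. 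Passing to a convergent subsequence $(M_k,W_k)\to(M,W)$, the affine set $\cF$ is closed and $\Theta$ is convex and l.s.c.\ (the only nontrivial point being l.s.c.\ of $\tilde L$, the perspective of the convex, l.s.c., superlinear function $w\mapsto\tilde H^*(x,-w)$), so $\Theta(M,W)\le\liminf_k\Theta(M_k,W_k)=\mu$, i.e.\ $(M,W)$ is a minimizer.

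\emph{Positivity.} Let $(M,W)$ be a minimizer, so $M\ge0$ and $W\in K$. The first step I would take is to produce a Lagrange multiplier; for this I would verify the qualification of Proposition~\ref{sec:discr-vers-vari-1} at an interior point of $\mathrm{dom}\,\Theta$, taking $M^\flat$ the solution of the discrete heat equation $AM^\flat=0$, $M^{\flat,0}=\bar M^0$ (so $M^{\flat,n+1}=(I+\Delta t\,\nu(-\Delta_h))^{-1}M^{\flat,n}$, which is entrywise positive for $n\ge1$ since $(I+\Delta t\,\nu(-\Delta_h))^{-1}$ is the inverse of an irreducible $M$-matrix and $\bar M^0$ is nonnegative and nonzero), and $W^\flat\equiv w^\flat$ constant with $w^\flat$ in the interior of $\{w:\tilde L(x,1,w)<+\infty\}$: then $(M^\flat,W^\flat)\in\cF$ (because $AM^\flat=0$, $BW^\flat=0$, $M^{\flat,0}=\bar M^0$) and $(M^\flat,W^\flat)\in\mathrm{int}(\mathrm{dom}\,\Theta)$ ($M^0$ not entering $\Theta$). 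By Fenchel--Rockafellar duality there is then $U\in\cU$ such that $(U,M,W)$ solves the optimality system of~\eqref{eq:discrete-pb-short}; using $B^*U=-[\grad_h U]$ (see~\eqref{eq:var:7}) and the adjointness identity $\sum_i\cT_i(U,M)W_i=-\sum_i M_i\langle\tilde H_p(x_i,[\grad_h U]_i),[\grad_h W]_i\rangle$, I would identify this system with the finite-difference scheme~\eqref{eq:discrete-HJB}--\eqref{eq:discrete-KFP} (up to the usual sign convention on the value function), cf.~\cite{BricenoAriasetalCEMRACS2017,MR3135339}; in particular $M$ solves the discrete KFP equation~\eqref{eq:sysH-FP-scheme-discrete-edo}.

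I would then conclude by a discrete maximum principle. Write~\eqref{eq:sysH-FP-scheme-discrete-edo} as $\cA_n M^{n+1}=M^n$ with $\cA_n:=I+\Delta t\,\nu(-\Delta_h)-\Delta t\,\cT(U^n,\cdot)$. By monotonicity~($\mathbf{\tilde H_1}$), $\tilde H_{p_1}\le0\le\tilde H_{p_2}$, so $M\mapsto-\Delta t\,\cT(U^n,M)$ has non-positive off-diagonal and non-negative diagonal entries; adding $\Delta t\,\nu(-\Delta_h)$, the matrix $\cA_n$ has diagonal $\ge1+2\Delta t\,\nu/h^2>0$, all off-diagonal entries $\le0$, and those linking $i$ to $i\pm1$ strictly negative. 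Since $\sum_i\cT_i(U^n,M)=0$ for every $M$ (mass conservation), $\cT(U^n,\cdot)$ has zero column sums, hence every column of $\cA_n$ sums to $1$; so $\cA_n^\top$ is a strictly diagonally dominant $Z$-matrix with positive diagonal, $\cA_n$ is therefore a nonsingular $M$-matrix with $\cA_n^{-1}\ge0$ entrywise, and the $i\leftrightarrow i\pm1$ couplings make $\cA_n$ irreducible, whence $\cA_n^{-1}>0$ entrywise. Since $M^0=\bar M^0$ is nonnegative and nonzero, an immediate induction gives $M^n=\cA_{n-1}^{-1}\cdots\cA_0^{-1}M^0>0$ entrywise for every $n\ge1$, and the periodicity convention $M^n_{N_h}=M^n_0$ then yields $M^n_i>0$ for all $i\in\{0,\dots,N_h\}$ and all $n>0$.

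I expect the main obstacle to be the passage, via duality, from ``$(M,W)$ minimizes $\Theta$'' to ``$M$ solves the discrete KFP equation'': this needs a constraint qualification strong enough to deliver the multiplier $U$ (handled here by the explicit heat-flow competitor in $\mathrm{int}(\mathrm{dom}\,\Theta)$) together with the standard but bookkeeping-heavy identification of the KKT conditions with the discrete MFG scheme. Once that is in place, the strict positivity is a purely linear-algebraic consequence of the monotonicity of $\tilde H$ and mass conservation, and the existence part is a routine direct-method argument whose only delicate point is the coercivity in $W$, where superlinearity of $\tilde H^*$ is used.
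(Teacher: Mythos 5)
Your proof is correct and takes essentially the same route as the paper (which does not spell out the argument but defers to the references it cites, notably~\cite[Theorem 3.1]{BricenoAriasetalCEMRACS2017} and~\cite[Theorem 2.1]{MR3772008}): existence by the direct method using mass conservation to bound $M$ and the superlinearity of $\tilde H^*$ to gain coercivity in $W$, and strict positivity by producing the dual variable $U$, identifying the KKT system with the discrete KFP equation~\eqref{eq:sysH-FP-scheme-discrete-edo}, and then invoking a discrete maximum principle via the irreducible $M$-matrix structure of $I+\Delta t\,\nu(-\Delta_h)-\Delta t\,\cT(U^n,\cdot)$, which has unit column sums because $\Delta_h$ and $\cT(U^n,\cdot)$ conserve mass. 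Two small remarks: the constraint qualification in Proposition~\ref{sec:discr-vers-vari-1} is stated only as finiteness of $\Theta$ at a feasible point, whereas your argument rightly needs (and constructs, via the discrete heat flow) an interior point of $\mathrm{dom}\,\Theta$ to guarantee dual attainment in the Fenchel--Rockafellar theorem; and the condition ``$w\in K$'' in~\eqref{eq:var:3} is really ``$-w/m\in\mathrm{dom}\,\tilde H^*(x,\cdot)$'' (a sign slip in the paper, inherited harmlessly in your write-up).
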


\subsection{Recovery of the finite difference scheme  by convex duality}
\label{sec:recov-finite-diff}
We will now show that the finite difference scheme introduced in the previous section corresponds to the optimality conditions of the discrete minimization problem~\eqref{eq:discrete-pb-short} introduced above. To this end, we first rewrite the discrete problem by embedding the constraint in the cost functional as follows:
\begin{equation}
\label{eq:discrete-pb-primal-chi0}
	\min_{(M,W) \in \cM \times \cW}
	\left\{ \Theta(M,W) + \chi_0(\Sigma(M,W)) \right\},
\end{equation}
where 
$
	\chi_0: \cU \times \RR^{N_h} \to \RR \cup \{+\infty\}
$
is the characteristic function (in the convex analytical sense) of the singleton $\{(0_\cU, \bar M^0)\}$, i.e., $\chi_0(U, \varphi)$ vanishes if $(U, \varphi) = (0_\cU, \bar M^0)$ and otherwise it takes the value $+\infty$.

By Fenchel-Rockafellar duality theorem, see~\cite{MR1451876}, we obtain that
\begin{equation}
\label{eq:discrete-pb-duality}
	\min_{(M,W) \in \cM \times \cW}
	\left\{ \Theta(M,W) + \chi_0(\Sigma(M,W)) \right\}
	=
	- \min_{(U,\varphi) \in \cU \times \RR^{N_h}} 
	\left\{ \Theta^*(\Sigma^*(U,\varphi)) + \chi_0^*(-U, -\varphi) \right\},
\end{equation}
where $\Sigma^*$ is the adjoint operator of $\Sigma$, and $\chi_0^*$ and $\Theta^*$ are the convex conjugates of $\chi_0$ and $\Theta$ respectively, to wit,
\begin{equation}
  \label{eq:var:10}
  \Bigl\langle	\Sigma^*(U,\varphi) , (M,W) \Bigr\rangle=  \langle A^* U, M\rangle + \langle B^* U, W\rangle + \sum_{i=0}^{N_h-1} \varphi_i M^0_i 
\end{equation}
and
\begin{equation}
  \label{eq:var:11}
	\chi_0^*(-U, -\varphi) = - \sum_{i=0}^{N_h-1} \bar M^0_i \varphi_i .
      \end{equation}
Therefore
\begin{displaymath}
  \begin{split}
   & \Theta^* (\Sigma^*(U,\varphi)) \\
    =	&\max_{(M,W) \in \cM\times \cW }
    \Bigl(  \langle A^* U, M\rangle + \langle B^* U, W\rangle + \sum_{i=0}^{N_h-1} \varphi_i M^0_i
    -	\sum_{n=1}^{N_T} \sum_{i=0}^{N_h-1} \left[ \tilde L(x_i, M^n_i, W^{n-1}_i) + F(x_i, M^n_i) \right] - \frac{1}{\Delta t}  \sum_{i=0}^{N_h-1}  \Phi(x_i, M^{N_T}_i)    \Bigr)
  \end{split}
\end{displaymath}
We use the fact that
\begin{displaymath}
\max_{W\in \cW} \left(\langle B^* U, W\rangle  -	\sum_{n=1}^{N_T} \sum_{i=0}^{N_h-1} \tilde L(x_i, M^n_i, W^{n-1}_i)\right)=\left\{
  \begin{array}[c]{ll}
    \ds \sum_{n=1}^{N_T} \sum_{i=0}^{N_h-1} M_i^n \tilde H\left(x_i, -(B^*U)^{n-1}_i\right),\quad &\hbox{if } M^n \ge 0 \; \hbox{for all } n\ge 1,\\
    -\infty,\quad& \hbox{otherwise.}
  \end{array}
\right.
\end{displaymath}
We deduce the following:
\begin{equation}
  \label{eq:var:12}
    \begin{split}
   & \Theta^* (\Sigma^*(U,\varphi)) \\
    =& \max_{M\in \cM}   \Bigl(  \langle A^* U, M\rangle  - \sum_{i=0}^{N_h-1} \varphi_i M^0_i +
    \sum_{n=1}^{N_T} \sum_{i=0}^{N_h-1}  \left[M_i^n \tilde H\left(x_i, -(B^*U)^{n-1}_i\right)- F(x_i, M^n_i) \right] - \frac{1}{\Delta t}  \sum_{i=0}^{N_h-1}  \Phi(x_i, M^{N_T}_i)    \Bigr)\\
    =& \sum_{n=1}^{N_T-1} \sum_{i=0}^{N_h-1} F^* \Bigl( x_i,  (A^* U)^n_i +\tilde H\left (x_i ,[\nabla_h U^{n-1}]_i\right)  \Bigr)
     +  \sum_{i=0}^{N_h-1} \left(F+\frac 1 {\Delta t} \Phi \right)^*  \Bigl(x_i,   (A^* U)^{N_T}_i +\tilde H\left (x_i ,[\nabla_h U^{N_T-1}]_i\right)  \Bigr)
    \\& +\max_{ M\in \cM} M^0_i \left( (A^* U)^0 _i +\varphi_i \right) ,
  \end{split}
\end{equation}
where for every $x \in \TT$, $F^*(x, \cdot)$ and $\left(F+\frac 1 {\Delta t} \Phi\right)^*(x, \cdot)$ are the convex conjugate of $F(x, \cdot)$ and $F(x,\cdot)+\frac 1 {\Delta t}\Phi(x, \cdot)$ respectively.
Note that from the definition of $F$, it has not been  not necessary to impose the constraint  $ M^n \ge 0 \; \hbox{for all } n\ge 1$ in (\ref{eq:var:12}).
Note also that the last term in  (\ref{eq:var:12}) can be written $\max_{ M\in \cM} M^0_i \left( -\frac 1 {\Delta t} U^0 _i +\varphi_i \right) $.      
Using (\ref{eq:var:12}), in the dual minimization problem given in the right hand side of~\eqref{eq:discrete-pb-duality}, 
we see that the minimization with respect to $\varphi$ yields that $M_i^0=\tilde M_i^0$. Therefore, the 
the dual problem can be rewritten as
\begin{align}
	&- \min_{U \in \cU} 
	\Bigg\{ \sum_i \left(F + \frac{1}{\Delta t} \Phi\right)^* \left( x_i, \frac{U^{N_T-1}_i}{\Delta t} - \nu(\Delta_h U^{N_T-1})_i + \tilde H(x_i, [\nabla_h U^{N_T-1}]_i) \right)
	\notag
	\\
	&\qquad\qquad 
	+ \sum_{n=0}^{N_T-2} \sum_i F^*  \left( x_i, \frac{U^{n}_i - U^{n+1}_i}{\Delta t} - \nu(\Delta_h U^{n})_i + \tilde H(x_i, [\nabla_h U^{n}]_i) \right)
	- \frac{1}{\Delta t} \sum_i \bar M^0_i U^0_i 
	\Bigg\},
	\label{eq:discrete-pb-dual}
\end{align}
 which is the discrete version of
$$
	-\min_u \left\{ \int_0^T \int_\TT F^*\Big( x, -\partial_t u(t,x) - \nu \Delta u(t,x) + H_0(x, \nabla u(t,x)) \Big) dx dt + \int_\TT \Phi^* (x, u(T,x)) dx - \int_\TT m_0(x) u(0,x) dx \right\}.
$$

The arguments above lead to the following:
\begin{theorem}
	The solutions $(M,W) \in \cM \times \cW$ and $U \in \cU$ of the primal/dual problems are such that: 
	\begin{itemize}
		\item if $\nu>0$, $M^n_i>0$ for all $0\le i \le N_h-1$ and $0< n \le N_T$;  
		\item $W^n_i = M^{n+1}_i P_K([\nabla_h U^n]_i)$ for all $0\le i \le N_h-1$ and $0 \le n < N_T$;  
		\item $(U,M)$ satisfies the discrete MFG system~\eqref{eq:discrete-HJB}--\eqref{eq:discrete-KFP} obtained by the finite difference scheme.
	\end{itemize}
	Furthermore, the solution is unique if $f_0$ and $\phi$ are strictly increasing.
\end{theorem}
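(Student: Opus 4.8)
The plan is to derive all three bullet points as direct consequences of the Fenchel--Rockafellar duality identity~\eqref{eq:discrete-pb-duality} together with the explicit computations~\eqref{eq:var:12}--\eqref{eq:discrete-pb-dual} already carried out above. First I would invoke Theorem~\ref{sec:discr-vers-vari-3} to obtain a primal minimizer $(M,W)$, and observe that the dual problem~\eqref{eq:discrete-pb-dual}, being the minimization of a convex coercive function of $U$ (coercivity coming from the superlinear growth of $H_0$ via $\tilde H$ and the growth of $F^*,\Phi^*$), admits a minimizer $U$. By Proposition~\ref{sec:discr-vers-vari-1} the constraint qualification holds, so strong duality is valid and the primal--dual optimality (extremality) conditions hold: $\Sigma^*(U,\varphi)$ must lie in $\partial\Theta(M,W)$ and $(-U,-\varphi)\in\partial\chi_0(\Sigma(M,W))$, equivalently the sup defining $\Theta^*(\Sigma^*(U,\varphi))$ in~\eqref{eq:var:12} is attained precisely at $(M,W)$.

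\emph{Positivity of $M$.} This is already part of the statement of Theorem~\ref{sec:discr-vers-vari-3} when $\nu>0$, so the first bullet is simply quoted. (If one prefers a duality-based argument, positivity also follows from the extremality relation $M^{n}_i = \partial_{\gamma}[\,\cdot\,]$ evaluated through $F^*$ and $\Phi^*$, but the cleanest route is to cite the earlier theorem.)

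\emph{Recovery of $W$.} I would read off the maximizer in the inner problem $\max_{W\in\cW}\bigl(\langle B^*U,W\rangle-\sum_{n,i}\tilde L(x_i,M^n_i,W^{n-1}_i)\bigr)$ displayed just before~\eqref{eq:var:12}. Since $\tilde L(x,m,w)=m\tilde H^*(x,-w/m)$ for $m>0$, the first-order condition in $w$ gives $-(B^*U)^{n-1}_i\in\partial_w\bigl[m\tilde H^*(x_i,-w/m)\bigr]$, i.e. $(B^*U)^{n-1}_i\in\partial_\gamma \tilde H^*(x_i,\gamma)$ at $\gamma=-W^{n-1}_i/M^n_i$; by Fenchel duality for the pair $(\tilde H,\tilde H^*)$ this is equivalent to $-W^{n-1}_i/M^n_i=\nabla_p\tilde H(x_i,-(B^*U)^{n-1}_i)$. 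Using $(B^*U)^{n}_i=-[\nabla_h U^n]_i$ from~\eqref{eq:var:7}, and in the quadratic model $\nabla_p\tilde H(x,p)=P_K(p)$ (Example~\ref{eq-quadratic-discrete}), this rearranges to $W^n_i=M^{n+1}_i P_K([\nabla_h U^n]_i)$, which is the second bullet. I would remark that positivity of $M$ (first bullet) is what makes this manipulation legitimate, since otherwise $\tilde L$ is governed by its degenerate branch.

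\emph{Recovery of the finite difference system.} The discrete HJB equation~\eqref{eq:discrete-HJB} is obtained as the optimality condition in $U$ for the dual problem~\eqref{eq:discrete-pb-dual}: differentiating the convex objective in $U^n_i$ and setting the subgradient to zero produces, at each interior time level, $M^{n+1}_i\in\partial F^*(x_i,\xi^n_i)$ where $\xi^n_i=(A^*U)^{n}_i+\tilde H(x_i,[\nabla_h U^{n}]_i)$ wait —here I must be careful to track the indices coming from $A^*$ in~\eqref{eq:def-op-A*} and from $B^*$; the terminal level involves $(F+\frac1{\Delta t}\Phi)^*$ and yields the terminal condition $U^{N_T}_i=\phi(M^{N_T}_i)$, while the $n=0$ level produces $M^0=\bar M^0$, i.e.~\eqref{eq:sysH-FP-scheme-discrete-initialc}. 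Inverting the relation $M^{n+1}_i\in\partial F^*(x_i,\xi^n_i)$ via $\xi^n_i\in\partial F(x_i,M^{n+1}_i)$, and using $\partial_m F(x,m)=f_0(x,m)$ for $m>0$ (again positivity of $M$ is used), gives exactly $-(D_tU_i)^n-\nu(\Delta_hU^n)_i+\tilde H(x_i,[\nabla_h U^n]_i)=f_0(x_i,M^{n+1}_i)$, i.e.~\eqref{eq:discrete-HJB}. Conversely, the constraint $\Sigma(M,W)=(0_\cU,\bar M^0)$ together with the identity $W^n_i=M^{n+1}_iP_K([\nabla_h U^n]_i)$ and the definition of $B$ in~\eqref{eq:def-op-B} is, after expanding, precisely the discrete KFP equation~\eqref{eq:sysH-FP-scheme-discrete-edo} with transport term $\cT_i(U^n,M^{n+1})$; here one checks that $B$ applied to $W=M\cdot P_K(\nabla_hU)$ reproduces the operator $-\nu(\Delta_h\cdot)-\cT_i(\cdot)$ acting on $M^{n+1}$ up to the heat part carried by $A$. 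Finally, uniqueness when $f_0,\phi$ are strictly increasing follows because then $F(x,\cdot)$ and $\Phi(x,\cdot)$ are strictly convex on $\{m\ge0\}$, hence $\Theta$ is strictly convex in $M$ (and strictly convex in $W$ on $\{M>0\}$ by strict convexity of $\tilde H^*$ when $H_0$ is smooth strictly convex), so the primal minimizer is unique, and strict convexity of the dual objective in the relevant directions—equivalently injectivity of the map $U\mapsto M$ via the strictly monotone $\partial F,\partial\Phi$—gives uniqueness of $U$ modulo the kernel considerations in~\eqref{eq:var:8}, which is killed by the mass-normalization built into $\bar M^0$.

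The main obstacle I anticipate is bookkeeping rather than conceptual: matching the three index ranges in the adjoint heat operator $A^*$ of~\eqref{eq:def-op-A*} with the three groups of terms ($n=0$, $0<n<N_T$, $n=N_T$) in the dual functional~\eqref{eq:discrete-pb-dual}, and verifying that the subdifferential inversions $F^*\leftrightarrow F$, $(F+\tfrac1{\Delta t}\Phi)^*\leftrightarrow F+\tfrac1{\Delta t}\Phi$ reproduce exactly the stated boundary and terminal conditions, including the factor $\tfrac1{\Delta t}$ on $\Phi$. A secondary subtlety is ensuring that every place where we write $\partial_m F(x,m)=f_0(x,m)$ or divide by $M$ is justified by the strict positivity of $M$ from Theorem~\ref{sec:discr-vers-vari-3}, which is why that positivity statement is placed first.
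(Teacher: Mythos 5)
Your proposal follows the same path as the paper, whose proof is simply the remark that it ``follows naturally for the arguments given above'' (the Fenchel--Rockafellar computation \eqref{eq:var:12}--\eqref{eq:discrete-pb-dual}) together with pointers to~\cite{MR2888257}, \cite{BricenoAriasetalCEMRACS2017}, \cite{MR3772008}; you unpack exactly those arguments, namely Proposition~\ref{sec:discr-vers-vari-1} for constraint qualification and strong duality, Theorem~\ref{sec:discr-vers-vari-3} for positivity, the optimizer of the inner $\sup$ over $W$, and inversion of $\partial F^*$ and $\partial\bigl(F+\tfrac1{\Delta t}\Phi\bigr)^*$ to recover the scheme with its initial and terminal conditions. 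One bookkeeping caveat worth correcting: the first-order condition for the inner maximum is $(B^*U)^{n-1}_i\in\partial_w\tilde L(x_i,M^n_i,\cdot)$ \emph{without} the extraneous minus sign, and carried through consistently (using $\partial_w\bigl[m\tilde H^*(x,-w/m)\bigr]=-\partial_\gamma\tilde H^*(x,-w/m)$ and Fenchel inversion) this yields $W^n_i=-M^{n+1}_i\nabla_p\tilde H\bigl(x_i,[\nabla_h U^n]_i\bigr)$, which is also what the constraint $AM+BW=0_\cU$ must produce in order to match the transport operator $\cT_i$ in~\eqref{eq:sysH-FP-scheme-discrete-edo}; so your final ``rearranges to'' step silently drops a minus sign (as does the theorem as printed), a sign-convention slip rather than a flaw in the method.
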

\begin{proof}
  The proof follows naturally for the arguments given above. See e.g.~\cite{MR2888257} in the case of planning problems, \cite[Theorem 3.1]{BricenoAriasetalCEMRACS2017}, or~\cite[Theorem 2.1]{MR3772008} in the stationary case.
\end{proof}

In the rest of this section, we describe two numerical methods to solve the discrete optimization problem.

\subsection{Alternating Direction Method of Multipliers}
\label{sec:ADMM}

Here we describe the Alternating Direction Method of Multipliers (ADMM) based on an Augmented Lagrangian approach for the dual problem. The interested reader is referred to e.g. the monograph~\cite{MR724072} by Fortin and Glowinski and the references therein for more details. This idea was made popular  by Benamou and Brenier  for optimal transport,  see\cite{MR1738163}, and  first used in the context of MFGs by Benamou and Carlier in~\cite{MR3395203}; see also~\cite{MR3731033} for an application to second order MFG using multilevel preconditioners and~\cite{MR3575615} for an application to mean field type control.

The dual problem~\eqref{eq:discrete-pb-dual} can be rewritten as
$$
	\min_{U \in \cU} \left\{ \Psi(\Lambda^* U) + \Gamma(U) \right\}
$$
where
$$
	\Lambda^*: \cU \to \cM \times \cW,
	\qquad
	\Lambda^* U = (A^* U, B^* U),
$$
$$
	\Psi(M, W) = \sum_i \left(F + \frac{1}{\Delta t} \Phi\right)^* \left(x_i, M^{N_T}_i + \tilde H(x_i, W^{N_T-1}_i) \right)
	+ \sum_{n=0}^{N_T-2} \sum_i F^*  \left( x_i, M^n_i + \tilde H(x_i, W^{n-1}_i) \right)
$$
and 
$$
	\Gamma(U) = - \frac{1}{\Delta t} \sum_i \bar M^0_i U^0_i .
$$
Note that $\Psi$ is convex and l.s.c., and $\Gamma$ is linear. Moreover, $\Psi$ depend in a separate manner on the pairs $(M^n_i ,W^{n-1}_i )_{n,i}$, and a similar observation can be made for $\Lambda$.

 We can artificially introduce an extra variable $Q \in \cM \times \cW$ playing the role of $\Lambda^* U$. Introducing a Lagrange multiplier $\sigma = (M,W)$ for the constraint $Q = \Lambda^* U$, we obtain the following equivalent problem:
\begin{equation}
\label{eq:pb-Lagrangian}
	\min_{U \in \cU, Q \in \cM \times \cW} \,\, \sup_{\sigma \in \cM \times \cW} \,\, \cL(U, Q, \sigma),
\end{equation}
where the Lagrangian $\cL: \cU \times \cM \times \cW \times \cM \times \cW \to \RR$ is defined as
$$
	\cL(U, Q, \sigma) = \Psi(Q) + \Gamma(U)  - \langle \sigma, Q - \Lambda^* U \rangle .
$$
Given a parameter $r>0$, it is convenient to  introduce an equivalent problem involving the augmented Lagrangian $\cL_r$
 obtained by adding to $\cL$  a quadratic penalty term in order to obtain strong convexity: 
$$
	\cL_r(U, Q, \sigma) = \cL(U, Q, \sigma) + \frac{r}{2} \| Q - \Lambda^* U \|_2^2.
$$
Since the saddle-points of $\cL$ and $\cL_r$ coincide, equivalently to~\eqref{eq:pb-Lagrangian}, we will seek to solve
$$
	\min_{U \in \cU, Q \in \cM \times \cW} \,\, \sup_{\sigma \in \cM \times \cW} \,\, \cL_r(U, Q, \sigma).
$$
In this context, the Alternating Direction Method of Multipliers (see e.g. the method called ALG2 in~\cite{MR724072}) is 
described in Algorithm~\ref{admm-algo} below. We use $\partial \Gamma$ and $\partial \Psi$ to denote the subgradients of $\Gamma$ and $\Psi$ respectively.

\begin{algorithm} 
\caption{Alternating Direction Method of Multipliers }\label{admm-algo}
\begin{algorithmic}
\Function {\texttt{ADMM}}{$U, Q, \sigma$}
	\State Initialize $(U^{(0)}, Q^{(0)}, \sigma^{(0)}) \leftarrow (U, Q, \sigma)$
	\For{$k=0, \dots, K-1$}
		\vspace{0.1cm}
		\State Find $U^{(k+1)} \in \argmin_{U \in \cU} \cL_r(U, Q^{(k)}, \sigma^{(k)})$; the first order optimality condition yields:
		$$
			-r \Lambda \left( \Lambda^* U^{(k+1)} - Q^{(k)} \right) - \Lambda \sigma^{(k)} \in \partial \Gamma\left( U^{(k+1)} \right).
		$$
		\State Find $Q^{(k+1)} \in \argmin_{Q \in \cM \times \cW} \cL_r \left( U^{(k+1)}, Q, \sigma^{(k+1)} \right)$; the first order optimality condition yields:
		$$
			\sigma^{(k)} + r \left( \Lambda^* U^{(k+1)} - Q^{(k+1)} \right) \in \partial \Psi\left( Q^{(k+1)} \right).
		$$
		\State Perform a gradient step: $\sigma^{(k+1)} = \sigma^{(k)} + r \left( \Lambda^* U^{(k+1)} - Q^{(k+1)} \right)$.
		\vspace{0.1cm}
	\EndFor
	\Return $(U^{(K)}, Q^{(K)}, \sigma^{(K)})$
\EndFunction
\end{algorithmic}
\end{algorithm}

\begin{remark}[preservation of the sign]
An important consequence of this method is that, thanks to the second and third steps above, the non-negativity of the discrete approximations to the  density is preserved. Indeed, $\sigma^{(k+1)} \in  \partial \Psi\left( Q^{(k+1)} \right)$ and $ \Psi\left( Q^{(k+1)} \right)< +\infty$, hence $\Psi^*\left( \sigma^{(k+1)} \right) = \langle \sigma^{(k+1)}, Q^{(k+1)} \rangle - \Psi\left( Q^{(k+1)} \right)$, which yields $\Psi^*\left( \sigma^{(k+1)} \right) < +\infty$. In particular, denoting $(M^{(k+1)},W^{(k+1)}) = \sigma^{(k+1)} \in \cM \times \cW$, this implies that we have $(M^{(k+1)})^n_i \ge 0 $ for every $0 \le i < N_h, 0 \le n \le N_T$, and $(W^{(k+1)})^n_i \in K$ vanishes if $(M^{(k+1)})^{n+1}_i = 0$  for every $0 \le i < N_h, 0 \le n < N_T$.
\end{remark}

Let us provide more details on the numerical solution of the first and second steps in the above method. For the first step, since $\Gamma$ only acts on $U^0$, the first order optimality condition amounts to solving the discrete version of a boundary value problem on $(0,T) \times \TT$ or more generally $(0,T) \times \TT^d$ when the state is in dimension $d$, with a degenerate fourth order linear elliptic operator (of order four in the state variable if $\nu$ is positive and two in time), and with a Dirichlet condition at $t=T$ and a Neumann like condition at $t=0$. If $d>1$, it is generally not possible to use direct solvers; instead, one has to use iterative methods such as the conjugate gradient algorithm. An important difficulty is that, since the equation is fourth order with respect to the state variable if $\nu>0$, the condition number grows like $h^{-4}$, and a careful preconditionning is mandatory (we will come back to this point in the next section). In the deterministic case, i.e. if $\nu=0$, the first step consists of solving the discrete version of a second order $d+1$ dimensional elliptic equation and preconditioning is also useful for reducing the computational complexity.

As for the second step, the first order optimality condition amounts to solving, at each space-time grid node $(i,n)$, a non-linear minimization problem in $\RR^{1+2d}$ of the form 
$$
	\min_{a \in \RR, b \in \RR^2} F^*(x_i, a + \tilde H(x_i, b) ) - \langle \sigma, (a,b) \rangle + \frac{r}{2} \| (a,b) - (\bar a, \bar b)\|_2^2,
$$
where $(a, b)$ plays the role of $(M^n_i, W^{n-1}_i)$, whereas $(\bar a, \bar b)$ plays the role of $(\Lambda^* U)^n_i$.

Note that the quadratic term, which comes from the fact that we consider the augmented Lagrangian $\cL_r$ instead of the original Lagrangian $\cL$, provides coercivity and strict convexity. The two main difficulties in this step are the following. First, in general, $F^*$ does not admit an explicit formula and is itself obtained by solving a maximization problem. Second, for general $F$ and $\tilde H$, computing the minimizer explicitly may be impossible. In the latter case, Newton iterations may be used, see e.g.~\cite{MR3575615}. 

The following result ensures convergence of the numerical method; see~\cite[Theorem 8]{MR1168183} for more details.

\begin{theorem}[Eckstein and Bertsekas~\cite{MR1168183}]
	Assume that $r>0$, that $\Lambda \Lambda^*$ is symmetric and positive definite and that there exists a solution to the primal-dual extremality . 
Then the sequence $(U^{(k)}, Q^{(k)}, \sigma^{(k)})$ converges as $k\to \infty$ and
	$$
		\lim_{k \to +\infty} (U^{(k)}, Q^{(k)}, \sigma^{(k)}) = (\bar U, \Lambda^* \bar U, \bar \sigma),
	$$
where $\bar U$ solves the dual problem and $\bar \sigma$ solves the primal problem.
\end{theorem}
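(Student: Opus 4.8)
The result is classical, and the strategy I would follow, after Eckstein and Bertsekas, is to identify the ADMM iteration of Algorithm~\ref{admm-algo} with the Douglas--Rachford splitting algorithm applied to a maximal monotone inclusion in the multiplier variable, and then to invoke the abstract convergence of firmly nonexpansive fixed-point iterations in the finite-dimensional Euclidean space $\cM\times\cW$.

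First I would rewrite the extremality conditions. Since $F^*$ and $\Phi^*$ are finite-valued, $\Psi$ is convex, proper and l.s.c., while $\Gamma$ is linear; taking the Fenchel dual of $\min_U\{\Psi(\Lambda^*U)+\Gamma(U)\}$, a pair $(\bar U,\bar\sigma)$ solves the primal--dual extremality relations if and only if $-\Lambda\bar\sigma\in\partial\Gamma(\bar U)$ and $\Lambda^*\bar U\in\partial\Psi^*(\bar\sigma)$. Eliminating $\bar U$ through the first relation, this becomes $0\in\cA(\bar\sigma)+\cB(\bar\sigma)$, where $\cB=\partial\Psi^*$ and $\cA=\partial(\Gamma^*\circ(-\Lambda))$ is the normal-cone map of the affine feasibility subspace (it takes this simple form precisely because $\Gamma$ is linear); both operators are maximal monotone, and the assumption that a solution of the extremality system exists is exactly that $\cA+\cB$ has a zero.

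Next I would unfold the three updates of Algorithm~\ref{admm-algo}: using the two displayed first-order conditions together with $\sigma^{(k+1)}=\sigma^{(k)}+r(\Lambda^*U^{(k+1)}-Q^{(k+1)})$, and eliminating $U^{(k+1)}$ and $Q^{(k+1)}$, I would exhibit an auxiliary sequence $z^{(k)}\in\cM\times\cW$ obeying $z^{(k+1)}=\tfrac12 z^{(k)}+\tfrac12 R_{\cA}R_{\cB}\,z^{(k)}$, with $\sigma^{(k)}=J_{\cB}(z^{(k)})$, where $J$ and $R=2J-I$ denote resolvent and reflected resolvent. The hypothesis that $\Lambda\Lambda^*$ is symmetric positive definite, i.e. that $\Lambda^*$ has full column rank (which is in fact automatic for the operators~\eqref{eq:def-op-A*}--\eqref{eq:var:7}, as one checks directly), enters here: it makes the $U$-subproblem $\min_U\cL_r(U,Q^{(k)},\sigma^{(k)})$ strongly convex hence uniquely solvable, and turns the passage between the ADMM iterates and the Douglas--Rachford iterates into a bijection, so that the two schemes genuinely coincide. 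I would then invoke the standard facts that $G:=\tfrac12 I+\tfrac12 R_{\cA}R_{\cB}$ is firmly nonexpansive, that $J_{\cB}(\mathrm{Fix}\,G)=\mathrm{zer}(\cA+\cB)$, which is nonempty by hypothesis, and that a Krasnoselskii--Mann/Opial argument gives $z^{(k)}\to z^\star\in\mathrm{Fix}\,G$; hence $\sigma^{(k)}=J_{\cB}(z^{(k)})\to\bar\sigma:=J_{\cB}(z^\star)\in\mathrm{zer}(\cA+\cB)$, so $\bar\sigma$ solves the primal problem. The dual-update relation then forces $\Lambda^*U^{(k)}-Q^{(k)}\to0$, and since $\Lambda^*$ is injective, $U^{(k)}\to\bar U$ and $Q^{(k)}\to\Lambda^*\bar U$, with $\bar U$ a minimizer of~\eqref{eq:discrete-pb-dual}.

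The step I expect to be the main obstacle is the algebraic identification above: recasting the three ADMM updates as a single Douglas--Rachford recursion on the reduced variable and checking that the induced map is exactly $\tfrac12 I+\tfrac12 R_{\cA}R_{\cB}$, while verifying single-valuedness and full domain of the resolvents (maximal monotonicity plus finite dimension) and the solvability of the $Q$-subproblem (properness of $\Psi$ and $r>0$). A more self-contained alternative, bypassing the operator-splitting dictionary, is a direct Lyapunov argument: for an arbitrary primal--dual solution $(\bar U,\bar\sigma)$, show that $V_k:=\tfrac1r\|\sigma^{(k)}-\bar\sigma\|_2^2+r\|\Lambda^*U^{(k)}-\Lambda^*\bar U\|_2^2$ is nonincreasing with summable increments; deduce $\Lambda^*U^{(k)}-Q^{(k)}\to0$ and $Q^{(k+1)}-Q^{(k)}\to0$; extract a subsequential limit that is a saddle point of $\cL$; and upgrade to convergence of the whole sequence using the monotonicity of $V_k$ relative to that limit. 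Here too, $\Lambda\Lambda^*$ positive definite is what lets one pass from convergence of $\Lambda^*U^{(k)}$ to convergence of $U^{(k)}$.
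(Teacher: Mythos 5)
The paper does not prove this theorem itself; it simply invokes Eckstein and Bertsekas (Theorem~8 of the cited reference). Your sketch reproduces exactly the argument of that source — recasting the ADMM iteration as Douglas--Rachford splitting applied to the maximal monotone inclusion $0\in\cA(\sigma)+\cB(\sigma)$ in the multiplier variable, invoking firm nonexpansiveness of $\tfrac12 I+\tfrac12 R_{\cA}R_{\cB}$ and a Krasnoselskii--Mann argument, and using the positive definiteness of $\Lambda\Lambda^*$ (equivalently, injectivity of $\Lambda^*$, which you correctly observe holds for the specific operators $A^*,B^*$ here) to recover convergence of $U^{(k)}$ from that of $\Lambda^*U^{(k)}$ — so it is correct and follows the same route as the paper's cited source.
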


\subsection{Chambolle and Pock's algorithm}
\label{sec:CPalgo}

We now consider a primal-dual algorithm introduced by Chambolle and Pock in~\cite{MR2782122}. The application to stationary MFGs was first investigated by Brice{\~n}o-Arias, Kalise and Silva in~\cite{MR3772008}; see also~\cite{BricenoAriasetalCEMRACS2017} for an extension to the dynamic setting.

Introducing the notation $\Pi = \chi_0 \circ \Sigma : \cM \times \cW \ni (M,W) \mapsto \chi_0(\Sigma(M,W)) \in \RR \cup \{+\infty\}$, the primal problem~\eqref{eq:discrete-pb-primal-chi0} can be written as
$$
	\min_{\sigma \in \cM \times \cW}
	\left\{ \Theta(\sigma) + \Pi(\sigma) \right\},
$$
and the dual problem can be written as
$$
	\min_{Q \in \cM \times \cW}
	\left\{ \Theta^*(-Q) + \Pi^*(Q) \right\}.
$$
For $r,s >0$, the first order optimality conditions at $\hat\sigma$ and $\hat Q$ can be equivalently written as
$$
	\begin{cases}
	\,\, - \hat Q &\in \partial \Theta(\hat \sigma) 
	\\
	\,\, \hat \sigma &\in \partial \Pi^*(\hat Q)
	\end{cases}
	\quad
	\Leftrightarrow
	\quad
	\begin{cases}
	\,\, \hat \sigma - r \hat Q &\in \hat\sigma + r \partial \Theta(\hat \sigma) 
	\\
	\,\, \hat Q + s \hat \sigma &\in \hat Q + s \partial \Pi^*(\hat Q)
	\end{cases}
	\quad
	\Leftrightarrow
	\quad
	\begin{cases}
	\,\, \hat \sigma &\in \argmin_\sigma \left\{ \Theta(\sigma)  + \frac{1}{2r} \| \sigma - (\hat \sigma - r \hat Q) \|^2 \right\}
	\\
	\,\, \hat Q &\in \argmin_Q \left\{ \Pi^*(Q) + \frac{1}{2s} \| Q - (\hat Q + s \hat \sigma )\|^2 \right\}.
	\end{cases}
$$

Given some parameters $r>0, s>0, \tau \in [0,1]$, the Chambolle-Pock method is described in Algorithm~\ref{chambolle-pock-algo}.  The algorithm has been proved to converge if $rs < 1$.

\begin{algorithm} 
\caption{Chambolle-Pock Method}\label{chambolle-pock-algo}
\begin{algorithmic}
\Function {\texttt{ChambollePock}}{$\sigma, \tilde \sigma, Q$}
	\State Initialize $(\sigma^{(0)}, \tilde \sigma^{(0)}, Q^{(0)}) \leftarrow (\sigma, \tilde \sigma, Q)$
	\For{$k=0, \dots, K-1$}
		\vspace{0.1cm}
		\State Find
		\begin{equation}
		\label{eq:CPalgo-step1}
			Q^{(k+1)} \in \argmin_Q \left\{ \Pi^*(Q) + \frac{1}{2s} \| Q - (Q^{(k)} + s \tilde \sigma^{(k)} )\|^2 \right\}
		\end{equation}
		\State Find
		\begin{equation}
		\label{eq:CPalgo-step2}
			\sigma^{(k+1)} \in \argmin_\sigma \left\{ \Theta(\sigma)  + \frac{1}{2r} \| \sigma - (\sigma^{(k)} - r Q^{(k+1)}) \|^2 \right\}
		\end{equation}
		\State Set
		$$
			\tilde \sigma^{(k+1)} = \sigma^{(k+1)} + \tau \left( \sigma^{(k+1)} - \sigma^{(k)} \right).
		$$ 
		\vspace{0.1cm}
	\EndFor
	\Return $(\sigma^{(K)}, \tilde \sigma^{(K)}, Q^{(K)})$
\EndFunction
\end{algorithmic}
\end{algorithm}

Note that the first step is similar to the first step in the ADMM method described in subsection~\ref{sec:ADMM} and amounts to solving a linear fourth order PDE. The second step is easier than in ADMM because $\Theta$ admits an explicit formula.

\section{Multigrid preconditioners}
\label{sec:mult-prec}

\subsection{General considerations on multigrid methods}
\label{sec:gener-cons-mult}

Before explaining how multigrid methods can be applied in the context of numerical solutions for MFG, let us recall the main ideas. We refer to~\cite{MR1807961} for an introduction to multigrid methods and more details. In order to solve a linear system which corresponds to the discretisation of an equation on a given grid, we can use coarser grids in order to get approximate solutions. Intuitively, a multigrid scheme should be efficient because solving the system on a coarser grid is computationally easier and the solution on this coarser grid should provide a good approximation of the solution on the original grid. Indeed, using a coarser grid should help capturing quickly the low frequency modes (i.e., the modes corresponding to the smallest eigenvalues of the differential operator from which the linear system comes), which takes more iterations on a finer grid. 

When the linear system stems from a well behaved second order elliptic operator for example, one can find simple iterative methods (e.g. Jacobi or Gauss-Seidel algorithms) such that a few iterations of these methods are enough to damp the higher frequency components of the residual, i.e. to make the error smooth. Typically, these iterative methods have bad convergence properties, but they have good smoothing properties and are hence called \textit{smoothers}. The produced residual is smooth on the grid under consideration (i.e., it has small fast Fourier components on the given grid), so it is well represented on the next coarser grid. This suggests to transfer  the residual to the next coarser grid (in doing so, half of the low frequency components on the finer grid become high frequency components on the coarser one, so they will be damped by the smoother on the coarser grid). These principles are the basis for a recursive algorithm. Note that in such an algorithm, using a direct method for solving systems of linear equations is required only on the coarsest grid, which contains much fewer nodes than the initial grid. On the grids of intermediate sizes, one only needs to perform matrix multiplications.

To be more precise, let us consider a sequence of nested grids $(\cG_\ell)_{\ell = 0,\dots,L}$, i.e. such that $\cG_\ell \subseteq \cG_{\ell+1}, \ell=0,\dots,L-1$. Denote the corresponding number of points by $\tilde N_\ell = \tilde N 2^{d\ell}$, $\ell = 0,\dots,L$, where $\tilde N$ is a positive integer representing the number of points in the coarsest grid. Assume that the linear system to be solved is
\begin{equation}
\label{eq:sys-multigrid-L}
	M_L x_L = b_L
\end{equation}
where the unknown is $x_L \in \RR^{\tilde N_L}$ and with $b_L \in \RR^{\tilde N_L}$, $M_L \in \RR^{\tilde N_L \times \tilde N_L}$. In order to perform intergrid communications, we introduce 
\begin{itemize}
	\item Prolongation operators, which represent a grid function on the next finer grid: $P_\ell^{\ell+1}: \cG_\ell \to \cG_{\ell+1}$.
	\item Restriction operators, which interpolate a grid function on the next coarser grid: $R_\ell^{\ell-1}: \cG_\ell \to \cG_{\ell-1}$.
\end{itemize}
Using these operators, we can define on each grid $\cG_\ell$ a matrix corresponding to an approximate version of the linear system to be solved: 
$$
	M_{\ell} = R_{\ell+1}^{\ell} M_{\ell+1} P_\ell^{\ell+1}.
$$
Then, in order to solve $M_\ell x_\ell = b_\ell$, the method is decomposed into three main steps. First, a pre-smoothing step is performed: starting from an initial guess $\tilde x_\ell^{(0)}$, a few smoothing iterations, say $\eta_1$, i.e. Jacobi or Gauss-Seidel iterations  for example. This produces an estimate $\tilde x_\ell = \tilde x_\ell^{(\eta_1)}$. Second, an (approximate) solution $x_{\ell-1}$ on the next coarser grid is computed for the equation $M_{\ell-1} x_{\ell-1} = R_\ell^{\ell-1}(b_\ell - M_\ell \tilde x_\ell)$. This is performed either by calling recursively the same function, or by means of a direct solver (using Gaussian elimination) if it is on the coarsest grid.  Third, a post-smoothing step is performed: $\tilde x_\ell + P_{\ell-1}^{\ell} x_{\ell-1}$ is used as an initial guess, from which $\eta_2$ iterations of the smoother are applied, for the problem with right-hand side $b_\ell$. To understand the rationale behind this method, it is important to note that
\begin{align*}
	R_{\ell}^{\ell-1} M_\ell \left(\tilde x_\ell + P_{\ell-1}^{\ell} x_{\ell-1}\right)
	&= R_{\ell}^{\ell-1} M_\ell \tilde x_\ell + M_{\ell-1} x_{\ell-1}
	\\
	&\approx R_{\ell}^{\ell-1} M_\ell \tilde x_\ell + R_\ell^{\ell-1}(b_\ell - M_\ell \tilde x_\ell)
	\\
	&= R_{\ell}^{\ell-1} b_\ell.
\end{align*}
In words, the initial guess (namely, $\tilde x_\ell + P_{\ell-1}^{\ell} x_{\ell-1}$) for the third step above is a good candidate for a solution to the equation $M_\ell x_\ell = b_\ell$, at least on the coarser grid $\cG_{\ell-1}$.

Algorithm~\ref{multigrid-vcycle} provides a pseudo-code for the method described above. Here, $S_\ell(x, b, \eta)$ can be implemented by performing $\eta$ steps of \textsc{Gauss-Seidel} algorithm starting with $x$ and with $b$ as right-hand side. The method as presented uses once the multigrid scheme on the coarser grid, which is called a V-cycle. Other approaches are possible, such as W-cycle (in which the multigrid scheme is called twice) or F-cycle (which is intermediate between the V-cycle and the W-cycle).  See e.g.~\cite{MR1807961} for more details. 

\begin{algorithm} 
\caption{Multigrid Method for $M_L x_L = b_L$ with V-cycle}\label{multigrid-vcycle}
\begin{algorithmic}
\Function {\texttt{MultigridSolver}}{$\ell, \tilde x_\ell^{(0)}, b_\ell$}
\If{$\ell = 0$}
\State $x_0 \leftarrow M_0^{-1}b_0$ \hfill \textit{//  exact solver at level $0$}
\Else
\State $\tilde x_\ell \leftarrow S_\ell\Big(\tilde x_\ell^{(0)}, b_\ell, \eta_1\Big)$  \hfill \textit{//  pre-smoothing with $\eta_1$ steps of smoothing}
\State $\tilde x_{\ell-1}^{(0)} \leftarrow 0$
\State $x_{\ell-1} \leftarrow $ \texttt{MultigridSolver}$\Big(\ell-1, \tilde x_{\ell-1}^{(0)}, R_\ell^{\ell-1}(b_\ell - M_\ell \tilde x_\ell) \Big)$ \hfill \textit{//  coarser grid correction}
\State $x_\ell \leftarrow S_\ell\Big(x_\ell + P_{\ell-1}^{\ell} x_{\ell-1}, b_\ell, \eta_2\Big)$ \hfill \textit{//  post-smoothing with $\eta_2$ steps of smoothing}
\EndIf
\Return $x_\ell$
\EndFunction
\end{algorithmic}
\end{algorithm}

\subsection{Applications in the context of mean field games}
\label{sec:appl-cont-mfg}

Multigrid methods can be used for a linearized version of the MFG PDE system, see~\cite{MR2928376}, or as a key ingredient of the ADDM or the primal-dual algorithms, see~\cite{MR3731033,MR3772008}. In the latter case, it corresponds to taking $M_L = \Lambda \Lambda^*$ in~\eqref{eq:sys-multigrid-L}. A straightforward application of the multigrid scheme described above leads to coarsening the space-time grid which does not distinguish between the space and time dimensions. This is called {\sl full coarsening}. However, in the context of second-order MFG, this approach leads to poor performance. See e.g.~\cite{MR2928376}.  We reproduce here one table contained in~\cite{MR2928376}: the multigrid method is used as a preconditioner in a preconditioned \texttt{BiCGStab} iterative method, see~\cite{MR1149111}, in order to solve a linearized version of the MFG system of PDEs. 
 In Table \ref{tab:3}, we display the number of iterations of the preconditioned \texttt{BiCGStab}  method: we see that the number of iterations grows significantly as the number of nodes is increased.
 \begin{table}[H]
  \centering
  \caption{ Full coarsening multigrid preconditioner with 4 levels and several values of the viscosity $\nu$:  average  number of preconditioned \texttt{BiCGStab} iterations to decrease the residual by a factor $0.01$}
 {\small  \begin{tabular}[h]{|c||c|c|c|}
\hline
  $\nu\backslash$ grid& $32\times 32\times 32$  &  $64\times 64\times 64$ & $128\times 128\times 64$
 \\ \hline   \hline
 0.6 &  40 & 92  & \hbox{fail}
\\
\hline
 0.36 & 24  &61  & \hbox{fail}
\\
\hline
 0.2 & 21 &  45 &\hbox{fail}
 \\
 \hline
  \hline
\end{tabular}\label{tab:3}
}
\end{table}

The reason for this poor behavior can be explained by the fact that the usual smoothers actually make the error smooth in the hyperplanes $t = n \Delta t$, i.e. with respect to the variable $x$, but not with respect to the variable $t$. Indeed, the unknowns are more strongly coupled in the hyperplanes $\{(t,x) : t = n \Delta t\}, n = 0, \dots, N_T$ (fourth order operator w.r.t. $x$) than on the lines $\{(t,x) : x = i \, h\}, i=0,\dots,N_h$ (second order operator w.r.t. $t$). This leads to the idea of using semi-coarsening: the hierarchy of nested grids should be obtained by coarsening the grids in the $x$ directions only, but not in the $t$ direction. We refer the reader to~\cite{MR1807961} for  semi-coarsening multigrid methods in the context of  anisotropic operators.

In the context of the primal-dual method discussed in subsection~\ref{sec:CPalgo}, the first step~\eqref{eq:CPalgo-step1} amounts to solving a discrete version of the PDE with operator $-\partial_{tt}^2 + \nu^2 \Delta^2- \Delta$ where $\Delta^2$ denotes the bi-Laplacian operator. In other words, one needs to solve a system of the form~\eqref{eq:sys-multigrid-L} where $M_L$ corresponds to the discretization of this operator on the (finest) grid under consideration. One can thus use one cycle of the multigrid algorithm, which is a linear operator as a function of the residual on the finest grid, as a preconditioner for solving \eqref{eq:sys-multigrid-L} with the \texttt{BiCGStab} method. 

We now give details on the restriction and prolongation operators when $d=1$.
Using the notations introduced above, we consider that $N_h$ is of the form $N_h = n_0 2^{L}$  for some integer $n_0$. Remember that $\Delta t = T / N_T$ and $h = 1 / N_h$ since we are using the one-dimensional torus as the spatial domain. The number of points on the coarsest grid $\cG_0$ is $\tilde N = (N_T+1) \times n_0$ while on the $\ell$-th grid $\cG_\ell$, it is $\tilde N_\ell =  (N_T+1) \times n_0 \times 2^{\ell}$.

For the restriction operator  $R_\ell^{\ell-1}: \cG_\ell \to \cG_{\ell-1}$, following~\cite{MR2928376}, we can use the second-order full-weighting operator defined by
$$
(R_\ell^{\ell-1} x)^n_{i} := \frac1{4}
\left(
\begin{aligned}
2 X^n_{2i} + X^n_{2i+1}+X^n_{2i-1}
\end{aligned}
\right),
$$
for $n=0,\hdots, N_{T}$, $i=1, \hdots, 2^{\ell-1} n_0$.

As for the prolongation operator $P_\ell^{\ell+1}: \cG_\ell \to \cG_{\ell+1}$, one can take the standard linear interpolation which is second order accurate. An important aspect in the analysis of multigrid methods is that the sum of the accuracy orders of the two intergrid transfer operators should be not smaller than the order of the partial differential operator. Here, both are $4$. In this case, multigrid theory states that convergence holds even with a single smoothing step, i.e. it suffices to take $\eta_1, \eta_2$ such that $\eta_1 + \eta_2 = 1$.

\subsection{Numerical illustration}
\label{sec:numer-illustr}

In this paragraph, we borrow a numerical example from~\cite{BricenoAriasetalCEMRACS2017}. We assume that $d=2$ and that given $q >1$, with conjugate exponent denoted by $q'= q/(q-1)$, the Hamiltonian $H_0: \TT^2 \times \RR^2 \to \RR$ has the form 
$$
	H_0(x,p)= \frac{1}{q'} |p|^{q'}, \hspace{0.3cm} \forall \; x\in \TT^2, \; p \in \RR^2. 
$$
In this case the function $L$ defined in~\eqref{eq:def-L} takes the form
$$
	L(x,m,w) = 
	\begin{cases}
	 \frac{|w|^q}{qm^{q-1}},	&\text{ if } m>0,
	 \\
	 0, &\text{ if } (m,w)=(0,0), 
	 \\
	 +\infty, &\text{ otherwise.}
	\end{cases}
$$
Furthermore, recalling the notation~\eqref{eq:def-F-variational}--\eqref{eq:def-Phi-variational}, we consider $\phi \equiv 0$ and 
$$
	f(x, m) = m^2 - \overline H(x), \qquad \overline H(x) = \sin(2 \pi x_2) + \sin(2 \pi x_1) + \cos(2 \pi x_1),
$$
for all  $x = (x_1, x_2) \in \TT^2$ and $m \in \RR_+$. This means that in the underlying differential game, a typical agent aims to get closer to the maxima of $\bar{H}$ and, at the same time, she is adverse to crowded regions (because of the presence of the $m^2$ term in $f$).

Figure~\ref{fig:evolm} shows the evolution of the mass at four different time steps. Starting from a constant initial density, the mass converges to a steady state, and then, when $t$ gets close to the final time $T$, the mass is influenced by the final cost and converges to a final state. This behavior is referred to as \emph{turnpike phenomenon} in the literature on optimal control~\cite{MR3124890}. Theoretical results on the long time behavior of MFGs can be found in~\cite{MR2928380,MR3103242}. It is illustrated by Figure~\ref{fig:dist-statio}, which displays as a function of time $t$ the distance of the mass at time $t$ to the stationary state computed as in~\cite{MR3772008}. 
In other words, denoting by $M^\infty \in \mathbb{R}^{N_h\times N_h}$ the solution to the discrete stationary problem and by $M \in \cM$ the solution to the discrete evolutive problem, Figure~\ref{fig:dist-statio} displays the graph of 
$n \mapsto \|M^\infty - M^n\|_{\ell_2} = \left(h^2 \sum_{i,j} (M^\infty_{i,j} - M^n_{i,j})^2 \right)^{1/2}$, $n \in \{0,\dots,N_T\}$.

\begin{figure}	
	\centering
	\begin{subfigure}[t]{0.45\linewidth}
		\centering
		\includegraphics[width=\linewidth]{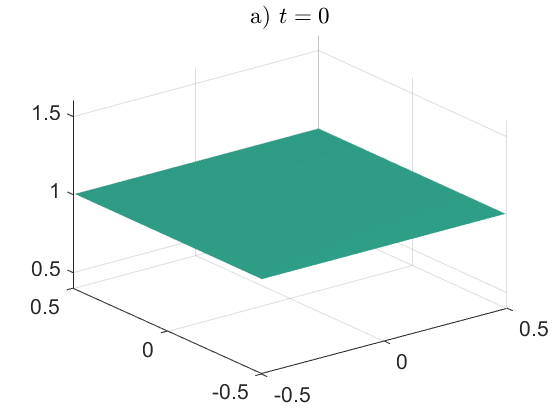}
	\end{subfigure}
	\quad
	\begin{subfigure}[t]{0.45\linewidth}
		\centering
	  	\includegraphics[width=\linewidth]{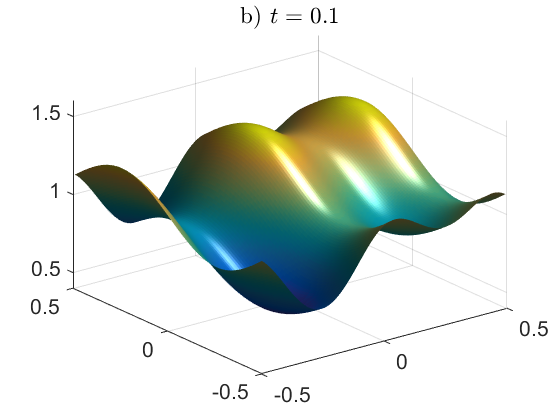}
	\end{subfigure}
	\vspace{1em}
	
	\begin{subfigure}[t]{0.45\linewidth}
		\centering
	  	\includegraphics[width=\linewidth]{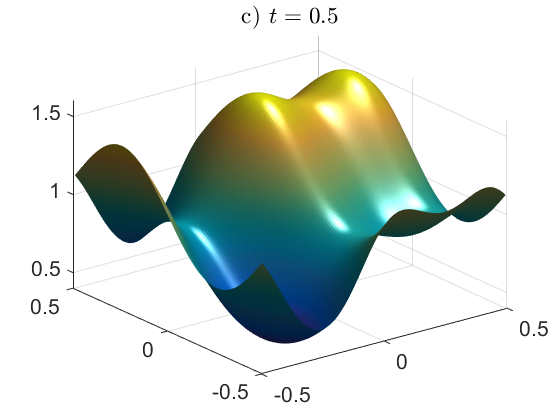}	
	\end{subfigure}
	\quad
	\begin{subfigure}[t]{0.45\linewidth}
		\centering
	  	\includegraphics[width=\linewidth]{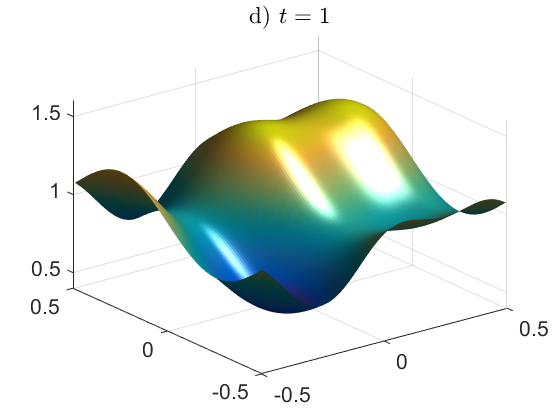}
	\end{subfigure}
	\caption{\label{fig:evolm} Evolution of the density $m$ obtained with the multi-grid preconditioner for $\nu = 0.5, T=1, N_T = 200$ and $N_h = 128$. At $t=0.12$ the solution is close to the solution of the associated  stationary MFG.}
\end{figure}

\begin{figure}	
		\centering
		\includegraphics[width=0.6\linewidth]{{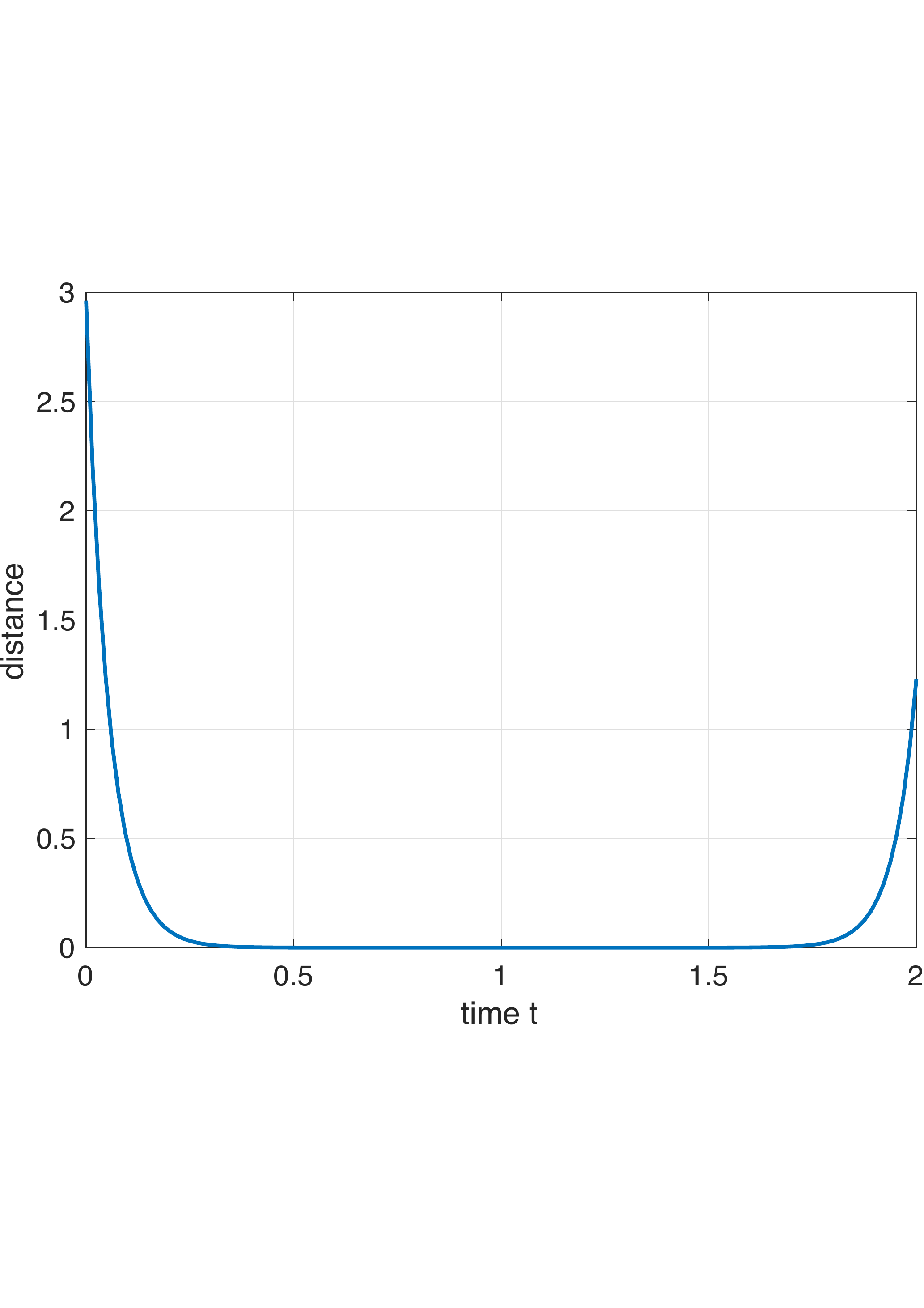}}	
	\caption{\label{fig:dist-statio} Distance to the stationary solution at each time $t \in [0,T]$, for $\nu = 0.5, T=2, N_T = 200$ and $N_h =128$. The distance is computed using the $\ell^2$ norm  as explained in the text. The turnpike phenomenon is observed as for a long time frame the time-dependent mass approaches the solution of the stationary MFG.}
\end{figure}

For the multigrid preconditioner, Table \ref{tab:MG_cvg_time} shows the computation times for different discretizations (i.e. different values of $N_h$ and $N_T$ in the coarsest grid). It has been observed in~\cite{MR3772008,BricenoAriasetalCEMRACS2017} that finer meshes with $128^3$ degrees of freedom are solvable within CPU times which outperfom several other methods such as Conjugate Gradient or \texttt{BiCGStab} unpreconditioned or preconditioned with modified incomplete Cholesky factorization. Furthermore, the method is robust with respect to different viscosities. 

From Table \ref{tab:MG_cvg_time} we observe that most of the computational time is used for solving~\eqref{eq:CPalgo-step2}, which does not use a multigrid strategy but which is a pointwise operator (see~\cite[Proposition 3.1]{MR3772008}) and thus could be fully parallelizable.

\begin{table}[H]
    \begin{minipage}{.5\linewidth}
      \caption*{(a) Grid with $64 \times 64 \times 64$ points.}
      \centering
        \begin{tabular}{| c | c | c | c |}
            \hline
            $\nu$ & Total time & Time first prox & Iterations \\
            \hline
            $0.6$ & $116.3$ [s] & $11.50$ [s] & $20$ \\
            \hline
            $0.36$ & $120.4$ [s] & $11.40$ [s] & $21$  \\
            \hline
            $0.2$  & $119.0$ [s]  & $11.26$ [s] & $22$ \\
            \hline
            $0.12$  &  $129.1$ [s] & $14.11$ [s] & $22$  \\
            \hline
            $0.046$ &  $225.0$ [s] & $23.28$ [s] & $39$  \\
            \hline
        \end{tabular}
    \end{minipage}%
    \begin{minipage}{.5\linewidth}
      \centering
        \caption*{(b) Grid with $128 \times 128 \times 128$ points.}
        \begin{tabular}{| c | c | c | c |}
            \hline
            $\nu$ & Total time & Time first prox & Iterations \\
            \hline
            $0.6$ &  $921.1$ [s] & $107.2$ [s] &  $20$ \\
            \hline
            $0.36$ &  $952.3$ [s] &  $118.0$ [s] & $21$ \\
            \hline
            $0.2$  &  $1028.8$ [s] &  $127.6$ [s] & $22$ \\
            \hline
            $0.12$  & $1036.4$ [s]  &  $135.5$ [s] & $23$ \\
            \hline
            $0.046$ &  $1982.2$ [s] & $260.0$ [s] & $42$ \\
            \hline
        \end{tabular}
    \end{minipage} 
  \vskip 2mm  	
    \caption{Time (in seconds) for the convergence of the Chambolle-Pock algorithm, cumulative time of the first proximal operator with the multigrid preconditioner, and number of iterations in the Chambolle-Pock algorithm, for different viscosity values $\nu$ and two types of grids. Here we used $\eta_1 = \eta_2 = 2$ in the multigrid methods, $T=1$.
      Instead of using a number of iterations $K$ fixed a priori, the iterations  have been stopped when the quantity $\|M^{(k+1)} - M^{(k)}\|_{\ell_2} = \left(\Delta t h^2  \sum_{n=0}^{N_T} \sum_{i,j} (M^{(k+1),n}_{i,j} - M^{(k),n}_{i,j})^2 \right)^{1/2}$ became smaller than $10^{-6}$, where  $M^{(k)}$ denotes the approximation of $M$ at iteration $k$, which is given by the first component of $\sigma^{(k)}$ in the notation used in Algorithm~\ref{chambolle-pock-algo}.}\label{tab:MG_cvg_time}
\end{table}

Table~\ref{tab:MG_bicg_ite} shows that the method is robust with respect to the viscosity since the average number of iterations of \texttt{BiCGStab} does not increase  much as the viscosity decreases. For instance, as shown in Table~\ref{tab:MG_bicg_ite}(b) for a grid of size $128 \times 128 \times 128$, the average number of iterations increases from $3.38$ to $4.67$ when $\nu$ is decreased from $0.6$ to $0.046$. On the other hand, Table~\ref{tab:MG_cvg_time} shows that the average number of  Chambolle-Pock iterations 
depends on the viscosity parameter, but this is not related to the use of the multigrid preconditioner.

\begin{table}[H]
    \begin{minipage}{.5\linewidth}
      \caption*{(a) Iterations to decrease the residual by a factor $10^{-3}$.}
      \centering
        \begin{tabular}{| c | c | c | c |}
            \hline
            $\nu$ & {\tiny $32\times32\times32$} & {\tiny $64\times64\times64$} & {\tiny $128\times128\times128$}\\
            \hline
            $0.6$ & $1.65$ & $1.86$ &  $2.33$\\
            \hline
            $0.36$ & $1.62$ & $1.90$ &  $2.43$ \\
            \hline
            $0.2$  & $1.68$ & $1.93$ & $2.59$ \\
            \hline
            $0.12$ & $1.84$ & $2.25$ &  $2.65$ \\
            \hline
            $0.046$ & $1.68$ & $2.05$ &  $2.63$ \\
            \hline
        \end{tabular}
    \end{minipage}%
    \begin{minipage}{.5\linewidth}
      \centering
        \caption*{(b) Iterations to solve the system with an error of $10^{-8}$.}
        \begin{tabular}{| c | c | c | c |}
            \hline
            $\nu$ & {\tiny $32\times32\times32$} & {\tiny $64\times64\times64$} & {\tiny $128\times128\times128$}\\
            \hline
            $0.6$ & $3.33$ & $3.40$ &  $3.38$\\
            \hline
            $0.36$ & $3.10$ & $3.21$ &  $3.83$ \\
            \hline
            $0.2$  & $3.07$ & $3.31$ & $4.20$ \\
            \hline
            $0.12$ & $3.25$ & $3.73$ &  $4.64$ \\
            \hline
            $0.046$ & $2.88$ & $3.59$ &  $4.67$ \\
            \hline
        \end{tabular}
    \end{minipage} 
    \caption{Average number of iterations of the preconditioned \texttt{BiCGStab} with $\eta_1 = \eta_2 = 2, T=1$ }   \label{tab:MG_bicg_ite}
\end{table}

\section{Algorithms for solving the system of non-linear equations}
\label{sec:other-algor-solv}

\subsection{Combining continuation methods with Newton iterations}
\label{sec:newton-algorithms}

Following previous works of the first author, see e.g.~\cite{MR3135339}, one may  use a continuation method (for example with respect to the viscosity parameter $\nu$) 
in which every system of nonlinear equations (given the parameter of the continuation method) is solved by means of Newton iterations.
With  Newton algorithm, it is important to have a good initial guess of the solution; for that,
 it is possible to take  advantage of the continuation method by  choosing the initial guess as the solution obtained with the previous value of  the parameter. 
Alternatively,  the initial guess can be obtained from the simulation of the same problem on a coarser grid, using interpolation. 
It is also important  to implement the Newton algorithm on a ``well conditioned''  system. Consider for example the system  (\ref{eq:discrete-HJB})-~(\ref{eq:discrete-KFP}): 
in this case, it proves more convenient to introduce  auxiliary unknowns, namely $\Bigl((f_i^n,)_{i,n},  (\Phi)_i\Bigr)$, 
 and see (\ref{eq:discrete-HJB})-(\ref{eq:discrete-KFP}) as a fixed point problem for  the map $\Xi:  \Bigl((f_i^n,)_{i,n},  (\Phi)_i\Bigr)\mapsto 
\Bigl((g_i^n,)_{i,n},  (\Psi)_i\Bigr) $ defined as follows: one solves first the discrete Bellman equation with data $\Bigl((f_i^n,)_{i,n},  (\Phi)_i\Bigr)$:
\begin{displaymath}
  \begin{array}[c]{rcll}
    - (D_t U_{i})^{n} - \nu (\Delta_h U^{n})_{i}
	+ \tilde H(x_i, [\grad_h U^n]_i) 
	&=&  f_i^{n} \quad  &i \in \{0,\dots,N_h\} \, , \,  n \in \{0, \dots, N_T-1\} \, ,\\
U^{n}_{0} &=& U^{n}_{N_h} \, ,\quad &n \in \{0, \dots,  N_T-1\} \, ,\\
U^{N_T}_{i} &=&\Phi_i & i \in \{ 0, \dots , N_{h} \} \,
\end{array}
\end{displaymath}
then the discrete Fokker-Planck equation
\begin{displaymath}
  \begin{array}[c]{rcll}
    (D_t M_{i})^{n} - \nu (\Delta_h M^{n+1})_{i}
    - \cT_i(U^{n}, M^{n+1})	&=& 0 \, , 
    \quad &i \in \{0,\dots,N_h\}, n \in \{0, \dots, N_T-1\} \, ,
    \\
    M^{n}_{0} &=& M^{n}_{N_h} \, , \quad &n \in \{ 1, \dots, N_T\} \, , 
    \\
    M^{0}_{i} &=& \bar m_0(x_i) \, , \, \quad & i \in \{0, \dots, N_h\} \, ,
  \end{array}
\end{displaymath}
and finally sets
\begin{displaymath}
  \begin{array}[c]{rcll}
    g_i^n &=& f_0(x_i,M^{n+1}_i),\quad \quad &i \in \{0,\dots,N_h\} \, , \,  n \in \{0, \dots, N_T-1\} \, ,\\
    \Psi_i&=& \phi(M^{N_T}_i) \, , \quad \quad &i \in \{0,\dots,N_h\}.
  \end{array}
\end{displaymath}
The Newton iterations are applied to the fixed point problem  $(I_d- \Xi) \Bigl((f_i^n,)_{i,n},  (\Phi)_i\Bigr)=0$. Solving the discrete Fokker-Planck guarantees that at each Newton iteration, the grid functions $M^n$ are non negative and have the same mass, which is not be the case if the Newton iterations are applied directly to (\ref{eq:discrete-HJB})-(\ref{eq:discrete-KFP}). Note also that Newton iterations consist of solving systems of linear equations involving the Jacobian of $\Xi$: 
for that, we use a nested iterative method (BiCGStab for example), which only requires a function that returns the matrix-vector product by the Jacobian, and not the construction of the Jacobian, which is a huge and dense  matrix.

The strategy consisting in  combining the continuation method with Newton iterations has the advantage to be very general: it requires neither a variational structure
 nor monotonicity: it has been successfully applied  in many simulations, for example for MFGs including congestion models, MFGs with two populations, 
see paragraph~\ref{sec:an-example-with} or MFGs  in which the coupling is through the control.  It is generally much faster than the methods presented in paragraph \ref{sec:varMFG}, when the latter can be used.

On the other hand, to the best of our knowledge, there is no general proof of convergence. Having these methods work efficiently is part of the know-how of the numerical analyst.  

Since  the strategy has been described in several articles of the first author, and since it is discussed thoroughly  in the paragraph \S~\ref{sec:an-example-with} devoted to  pedestrian flows, we shall not give any further detail here.

\subsection{A recursive algorithm based on elementary solvers on small time intervals}
\label{sec:recurs-algor-based}

In this paragraph, we consider a recursive method introduced by Chassagneux, Crisan and Delarue in~\cite{MR3914553} and further studied in~\cite{Angiulietal-2019}. It is based on the following idea. When the time horizon is small enough, mild assumptions allow one to apply Banach fixed point theorem and give a constructive proof of  existence and uniqueness for system~(\ref{eq:PDE-system-MFG}).  The unique  solution can be obtained by Picard iterations, i.e., by updating iteratively the flow of distributions and the value function. Then, when the time horizon $T$ is large, one can partition the time interval into intervals of duration $\tau$, with $\tau$ small enough. Let us consider the two adjacent intervals 
 $[0,T-\tau]$ and $[T-\tau, T]$.   The solutions in $[0,T-\tau]\times \TT^d$ and in $[T-\tau, T]\times \TT^d$  are coupled through their initial or terminal conditions: for the former interval $[0,T-\tau]$, the initial condition for the distribution of states  is given, but the terminal condition on the value function will come from the solution in $[T-\tau, T]$. 
The principle of the global solver is to use the elementary solver on $[T-\tau, T]$ (because $\tau$ is small enough) and recursive calls of the global solver on $[0,T-\tau]$, (which will in turn call the elementary solver on $[T-2\tau, T-\tau]$ and recursively the global solver on  $[0,T-2\tau]$, and so on so forth).

We present a version of this algorithm based on PDEs (the original version in~\cite{MR3914553} is based on forward-backward stochastic differential equations (FBSDEs for short) but the principle is the same). Recall that $T>0$ is the time horizon, $m_0$ is the initial density, $\phi$ is the terminal condition, and that we want to solve  system~\eqref{eq:PDE-system-MFG}. 

Let $K$ be a positive integer such that $\tau = T/K$ is small enough. Let 
$$
	\texttt{ESolver}:(\tau, \tilde m, \tilde \phi) \mapsto (m,u)
$$
 be an elementary solver which, given  $\tau$, an initial probability density $\tilde m$ defined on  $\TT^d$ 
and a terminal condition $\tilde \phi: \TT^d \to \RR$, returns the solution $(u(t),m(t))_{t \in [0,\tau]}$ to the MFG system of forward-backward PDEs corresponding to these data, i.e.,
\begin{subequations}
     \begin{empheq}[left=\empheqlbrace]{align*}
     	\displaystyle
	&-\frac{\partial u}{\partial t}(t,x)  - \nu \Delta u(t,x) = H(x, m(t,x), \grad u(t,x)), 
	&&\hbox{ in } [0,\tau) \times \TT^d,
	\\
	\displaystyle
	&\frac{\partial m}{\partial t}(t,x)  - \nu \Delta m(t,x) - \div\left( m(t,\cdot) H_p (\cdot, m(t,\cdot), \grad u(t,\cdot)) \right)(x) = 0, 
	&&\hbox{ in } (0,\tau] \times \TT^d,
	\\
	&u(\tau,x) = \tilde \phi(x), \qquad m(0,x) = \tilde m(x), 
	&&\hbox{ in } \TT^d.
     \end{empheq}
\end{subequations}
The solver \texttt{ESolver} may for instance consist of Picard or Newton iterations.

We then define the following recursive function,
 which takes as inputs the level of recursion $k$, the maximum recursion depth $K$, 
and an initial distribution $\tilde m: \TT^d \to \RR$, and returns an approximate solution of the system of PDEs in $[kT/K, T]\times \TT^d$ with initial condition $m(t,x)=\tilde m (x)$ and  terminal condition $u(T,x) = \phi(m(T,x),x)$. Calling $\texttt{RSolver}(0, K, m_0)$ then returns an approximate solution $(u(t),m(t))_{t \in [0,T]}$ to system~\eqref{eq:PDE-system-MFG} in $[0,T]\times \TT^d$, as desired.
\\
To compute the approximate solution on  $[kT/K, T]\times \TT^d$, the following is repeated $J$ times, say from $j=0$ to $J-1$, after some initialization step (see Algorithm~\ref{recursive-solver-algo} for a pseudo code):
\begin{enumerate}
\item  Compute the approximate solution on  $[(k+1)T/K, T]\times \TT^d$ by a recursive call of the algorithm, given the current approximation of $m((k+1)T/K,\cdot)$
(it will come from the next point if $j>0$).
\item Call the elementary solver on  $[kT/K,(k+1)T/K]\times \TT^d$  given $u((k+1)T/K,\cdot)$ coming from the previous point.
\end{enumerate}

\begin{algorithm}[H]
\caption{Recursive Solver for the system of forward-backward PDEs} \label{recursive-solver-algo}
\begin{algorithmic}
\Function {\texttt{RSolver}}{$k, K, \tilde m$}
\If{$k = K$}
		\vspace{0.1cm}
	\State $(u(t, \cdot),m(t, \cdot))_{t = T} = (\phi(\tilde m(\cdot), \cdot), \tilde m(\cdot))$ \hfill \textit{//  last level of recursion}
		\vspace{0.1cm}
\Else
		\vspace{0.1cm}
	\State $(u(t,\cdot), m(t,\cdot))_{t \in [kT/K, (k+1)T/K)]} \leftarrow (0, \tilde m(\cdot))$   \hfill \textit{//  initialization}
		\vspace{0.1cm}
	\For{$j=0, \dots, J$}
		\vspace{0.1cm}
		\State $(u(t, \cdot),m(t, \cdot))_{t \in [(k+1)T/K, T]} \leftarrow \texttt{RSolver}\big(k+1, K, m((k+1)T/K, \cdot) \big)$ \hfill \textit{// interval} $ [(k+1)T/K, T]$
		\vspace{0.1cm}
		\State $(u(t, \cdot),m(t, \cdot))_{t \in [kT/K, (k+1)T/K]} \leftarrow \texttt{ESolver}\big(T/K, m(kT/K, \cdot), u((k+1)T/K, \cdot) \big)$  \hfill \textit{// interval } $ [kT/K, (k+1)T/K]$
		\vspace{0.1cm}
	\EndFor
\EndIf
\Return $(u(t, \cdot),m(t, \cdot))_{t \in [kT/K, T]}$
\EndFunction
\end{algorithmic}
\end{algorithm}

In~\cite{MR3914553}, Chassagneux et al. introduced a version based on FBSDEs and proved, under suitable regularity assumptions on the decoupling field, the convergence of the algorithm, with a complexity that is  exponential in $K$. The method has been further tested in~\cite{Angiulietal-2019} with implementations relying on trees and grids to discretize the evolution of the state process.

\section{An   application to pedestrian flows}
\label{sec:an-example-with}
\subsection{An example with two populations, congestion effects and various boundary conditions}
\label{sec:an-example-with-1}
The numerical simulations discussed in this paragraphs somehow stand at the state of the art  because they combine the following  difficulties:
\begin{itemize}
	\item The MFG models includes congestion effects. Hence the Hamiltonian does not depend separately on $Du$ and $m$. In such cases, the MFG can never be interpreted as an optimal  control problem driven by a PDE; in other words, there is no variational interpretation, which makes it impossible to apply the methods discussed in \S~\ref{sec:varMFG}.
	\item There are two populations of agents which interact with each other, which adds a further layer of complexity. The now well known arguments due to Lasry and Lions and  leading to uniqueness do not apply
	\item The model will include different kinds of boundary conditions corresponding to walls, entrances and exits, which need careful discretizations.
	\item We are going to look for stationary equilibria, despite the fact that there is no underlying ergodicity: there should be a balance between exit and entry fluxes. A special numerical algorithm is necessary in order to capture such situations
\end{itemize}

We consider a two-population mean field game in a complex geometry. It models situations in which two crowds of pedestrians have to share a confined area.
In this case, the state space is a domain of $\R^2$.
The agents belonging to a given population are all identical, and differ from the agents belonging to the other population because they have for instance different objectives, and also because they feel uncomfortable in the regions where their own population is in minority (xenophobia). In the present example, there are several exit doors and the agents aim at reaching some of these doors, depending on which population they belong to. To reach their targets, the agents may have to cross regions in which their own population is in minority. 
More precisely, the running cost of each individual is made of different terms:
\begin{itemize}
\item a first term only depends of the state variable: it models the fact that a given agent more or less wishes to reach one or several exit doors. There is an exit  cost or reward at each doors, which 
 depends on which population the agents belong to. This translates the fact that the agents belonging to different populations have different objectives
\item the second term is a cost of motion. In order to model congestion effects, it depends on the velocity and on the distributions of states for both populations 
\item the third term models xenophobia and aversion to highly crowded regions.
\end{itemize}

\subsubsection{The system of partial differential equations}
\label{sec:syst-part-diff}
Labelling the two populations with the indexes $0$ and $1$, 
the model leads to a  system of four forward-backward partial differential equations as follows:
 \begin{align}\label{eq:ex:3}
\frac {\partial  u_0}{\partial t} +\nu \Delta  u_0 - H_0(\nabla  u_0; m_0,m_1)  &= - \Phi_0 (x,m_0,m_1),  \\
\label{eq:ex:4}
\frac {\partial  m_0}{\partial t}-\nu \Delta  m_0 - \diver\left(  m_0\frac {\partial H_0} {\partial p} (\nabla  u_0; m_0,m_1)\right)  &= 0, \\
\label{eq:ex:5}
\frac {\partial  u_1}{\partial t} +\nu \Delta  u_1 - H_1( \nabla  u_1; m_1,m_0)  &=  -\Phi_1(x,m_1,m_0), \\
\label{eq:ex:6}
\frac {\partial  m_1}{\partial t}- \nu \Delta  m_1 - \diver\left(  m_1\frac {\partial H_1} {\partial p} (\nabla  u_1; m_1,m_0)\right)  &= 0.
\end{align}
In the numerical simulations discussed below, we have chosen
\begin{equation}
  \label{eq:ex:1}
  H_i(x, p; m_i, m_j)= \frac {|p|^2}{1+ m_i + 5 m_j  },
\end{equation}
and 
\begin{equation}
  \label{eq:ex:2}
  \Phi_i(x,m_i,m_j)= 0.5+ 0.5  \left( \frac {m_i}{ m_i+m_j+\epsilon} -0.5  \right)_-  + (m_i+m_j-4)_+,
\end{equation}
where $\epsilon$ is a small parameter and  $j=1-i$. Note that we may replace (\ref{eq:ex:2})  by a   smooth function obtained by a regularization involving another small parameter $\epsilon_2$.
\begin{itemize}
\item The choice of $H_i$ aims at modelling the fact that motion  gets costly in the highly populated regions of the state space. The different factors in front of $m_i$ and $m_j$ in (\ref{eq:ex:1})
 aim at modelling the fact that the cost of motion of an agent of a given type, say $i$,
 is more sensitive to the density of agents of the different type, say $j$; indeed, since  the  agents of different types have different objectives,  their optimal 
controls are unlikely to be aligned, which makes motion even more difficult.
\item
The coupling cost in (\ref{eq:ex:2}) is made of three terms: the first term, namely $0.5$, is the instantaneous  cost for staying in the domain; the term  
$0.5  \left( \frac {m_i}{ m_i+m_j+\epsilon} -0.5  \right)_- $ translates the fact that an agent in population $i$ feels uncomfortable if its population  
is locally in minority.  The last term, namely $(m_i+m_j-4)_+$, models aversion to highly crowded regions.
\end{itemize}

\subsubsection{The domain and the boundary conditions}
\label{sec:doma-bound-cond}
The domain $\Omega$ is displayed in Figure \ref{fig:dom1}. The solid lines stand for walls, i.e. parts of the boundaries that cannot be crossed by the agents.
 The colored arrows indicate entries or exits  depending if they are pointing inward or outward. The two different colors correspond to the two different populations, green for population 0 and orange for population 1.  The length of the northern wall is $5$, and the north-south diameter is $2.5$. The width of the southern exit is $1$.
The widths of the other exits and entrances are $0.33$.  The width of the outward arrows stands for the reward for exiting.
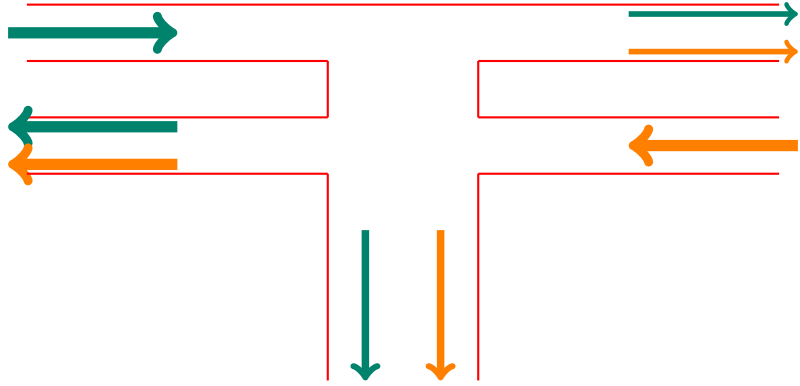
\begin{figure}[htbp]
  \centering
    \begin{center}
    \begin{tikzpicture}[scale=0.5
, trans/.style={thick,<->,shorten >=2pt,shorten <=2pt,>=stealth} ]
      \draw[red,thick] (0,10) -- (20,10);
      \draw [PineGreen, line width =1.5mm,->]  (-0.5,9.25)--(4,9.25); 
      \draw [PineGreen, line width =0.75mm,->] (16,9.75)--(20.5,9.75);
      \draw [orange, line width =0.75mm,->] (16,8.75)--(20.5,8.75);
      \draw[red,thick] (0,8.5) -- (8,8.5);
      \draw[red,thick] (12,8.5) -- (20,8.5);
      \draw[red,thick] (8,8.5) -- (8,7);
      \draw[red,thick] (12,8.5) -- (12,7);
      \draw[red,thick] (0,7) -- (8,7);
      \draw[red,thick] (12,7) -- (20,7);
      \draw [PineGreen, line width =1.5mm,->] (4,6.75) -- (-0.5,6.75) ;
        \draw [orange, line width =1.5mm,->] (4,5.75)-- (-0.5,5.75);
      \draw [orange, line width =1.5mm,->] (20.5,6.25) -- (16,6.25);
      \draw[red,thick] (0,5.5) -- (8,5.5);
      \draw[red,thick] (12,5.5) -- (20,5.5);
      \draw[red,thick] (8,5.5) -- (8,0);
      \draw[red,thick] (12,5.5) -- (12,0);
      \draw [PineGreen, line width =1mm,->]  (9,4)-- (9,0);
      \draw [orange, line width =1mm,->]  (11,4) -- (11,0);
    \end{tikzpicture}
  \end{center}
  \caption{\label{fig:dom1}The domain $\Omega$. The colored arrows indicate entries or exits  depending if they are pointing inward or outward.  The two different colors correspond to the two different populations. The width of the outward arrows stands for the reward for exiting.}
\end{figure}
\begin{itemize}
\item The boundary conditions at walls are as follows:
\begin{equation}\label{eq:ex:7}
     \frac{\partial u_i}{\partial n}(x)= 0,
\quad \hbox{and }\quad     \frac{\partial m_i}{\partial n}(x)= 0.
\end{equation}
The first condition in (\ref{eq:ex:7}) comes from the fact that the stochastic process describing the state of a given agent in population $i$ is reflected on walls.
The second condition  in (\ref{eq:ex:7})  is in fact  $-\nu \frac{\partial m_i}{\partial n} -   m_i\; n\cdot \frac {\partial H_i} {\partial p} (\nabla  u_i; m_i,m_j) = 0$, where we have taken into account  the Neumann condition on $u_i$.
\item At an exit door  for population $i$, the  boundary conditions are as follow
\begin{equation}\label{eq:ex:8}
       u_i =\hbox{ exit cost},\quad \hbox{and }\quad m_i=0.
\end{equation}
A negative exit cost means that the agents are rewarded for exiting the domain through this door. The homogeneous Dirichlet condition on $m_i$ in (\ref{eq:ex:8}) can be explained by saying that the agents stop taking part to the game as soon as they reach the exit.
\item At an entrance for population $i$, the  boundary conditions are as follows:
\begin{equation}\label{eq:ex:9}
       u_i  =\hbox{ exit cost},\quad \hbox{ and }\quad 
\nu \frac{\partial m_i}{\partial n} +  m_i\;n\cdot \frac {\partial H_i} {\partial p} (\nabla  u_i; m_i,m_j) =\hbox{ entry flux}.
\end{equation}
Setting a high exit cost prevents the agents from exiting though the entrance doors.
\end{itemize}
In our simulations, the exit costs of population 0 are as follows:
 \begin{enumerate}
  \item North-West entrance : $0$
  \item South-West exit : $-8.5$
  \item North-East exit : $-4$
  \item South-East exit : $0$
  \item South exit : $-7$
  \end{enumerate}
and the exit costs of population 1 are
  \begin{enumerate}
  \item North-West exit : $0$
  \item South-West exit : $-7$
  \item North-East exit : $-4$
  \item South-East entrance : $0$
  \item South exit: $-4$.
  \end{enumerate}
The entry fluxes are as follows:
 \begin{enumerate}
  \item Population 0:  at the North-West entrance, the entry flux is $1$
  \item Population 1:  at the South-East entrance, the entry flux is $1$.
  \end{enumerate}

For equilibria in finite horizon $T$, the system should be supplemented with an initial  Dirichlet condition for $m_0, m_1$  since the laws of the initial 
distributions are known,  and a terminal Dirichlet-like condition for $u_0,u_1$ accounting for the terminal costs.
  \subsection{Stationary equilibria}
\label{sec:stat-equil}
  
We look for a stationary equilibrium. For that, we solve numerically (\ref{eq:ex:3})-(\ref{eq:ex:6}) with a finite horizon $T$, 
and with the boundary conditions described in paragraph~\ref{sec:doma-bound-cond}, see paragraph~\ref{sec:algor-solv-syst} below   and use
 an iterative method in order to progressively diminish the effects of the initial and terminal conditions:  starting from $(u_i^0, m_i^0)_{i=0,1}$, the numerical solution of the finite horizon problem described above, we construct a sequence of approximate solutions $(u_i^\ell, m_i^\ell)_{\ell \ge 1}$ by the following induction:  $(u_i^{\ell+1}, m_i^{\ell+1})$ is the solution of the finite horizon problem with the same system of PDEs in $(0,T)\times \Omega$, the same boundary conditions on $(0,T)\times \partial \Omega$, 
and the  new initial and terminal conditions as follows:
\begin{eqnarray}
\label{eq:ex:10}
u_i^{\ell+1}(T, x)&=& u_i^{\ell}\left(\frac T 2, x\right),\qquad x\in \Omega,\; i=0,1,\\
m_i^{\ell+1}(0, x)&=& m_i^{\ell}\left(\frac T 2, x\right),\qquad x\in \Omega,\; i=0,1 .
\end{eqnarray}
As $\ell$ tends to $+\infty$, we observe that  $(u_i^\ell, m_i^\ell)$ converge to time-independent functions. At the limit, we obtain a steady solution of 
 \begin{align}\label{eq:ex:11}
\nu \Delta  u_0 - H_0(\nabla  u_0; m_0,m_1)  &= - \Phi_0 (x,m_0,m_1),  \\
\label{eq:ex:12}
-\nu \Delta  m_0 - \diver\left(  m_0\frac {\partial H_0} {\partial p} (\nabla  u_0; m_0,m_1)\right)  &= 0, \\
\label{eq:ex:13}
\nu \Delta  u_1 - H_1( \nabla  u_1; m_1,m_0)  &=  -\Phi_1(x,m_1,m_0), \\
\label{eq:ex:14}
- \nu \Delta  m_1 - \diver\left(  m_1\frac {\partial H_1} {\partial p} (\nabla  u_1; m_1,m_0)\right)  &= 0,
\end{align}
with the boundary conditions on $\partial \Omega$ described in paragraph~\ref{sec:doma-bound-cond}.

\subsection{ A stationary equilibrium with $\nu=0.30$}\label{sec:stat-equil-with}
We first take a relatively large viscosity coefficient namely $\nu=0.30$.
In Figure \ref{fig:stat_sol_0.31}, we display the distributions of states for the two populations, see Subfigure~\ref{fig:1.1}, the value functions for both populations, see Subfigure~\ref{fig:1.2}, and the optimal feedback controls of population 0 (respectively 1) in Subfigure~\ref{fig:1.3} (respectively Subfigure~\ref{fig:1.4}).
 We see that population 0 enters the domain via the north-west entrance, and most probably exits by the south-west exit door. Population 1 
enters the domain via the south-east entrance, and exits by two doors, the south-west and the southern ones. The effect of viscosity is large enough to prevent complete segregation of the two populations.
\begin{figure}	
	\centering
	\begin{subfigure}[t]{0.45\linewidth}
		\centering
		\includegraphics[width=\linewidth]{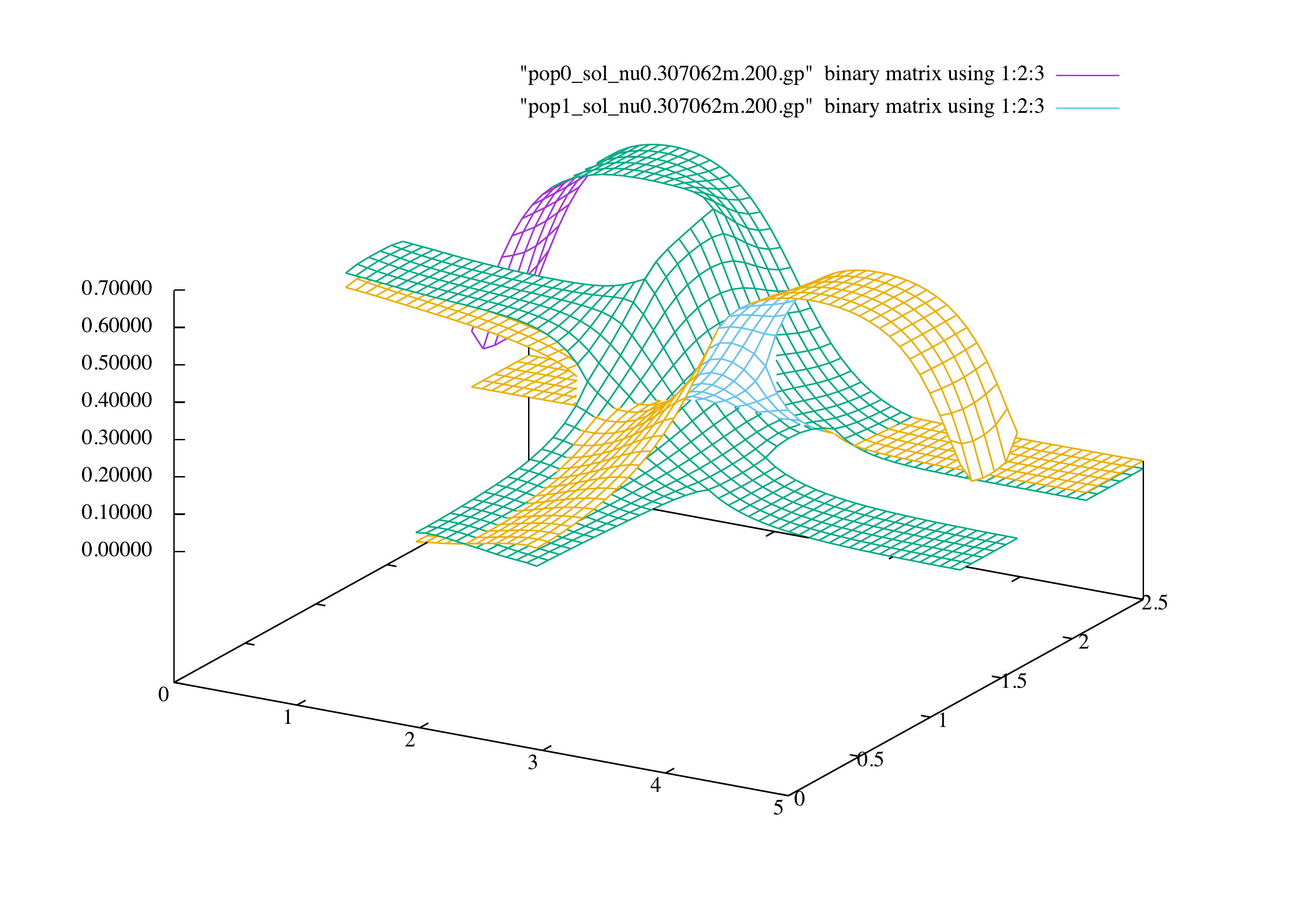}
		\caption{Distributions of the two populations}\label{fig:1.1}
	\end{subfigure}
	\quad
	\begin{subfigure}[t]{0.45\linewidth}
		\centering
		\includegraphics[width=\linewidth]{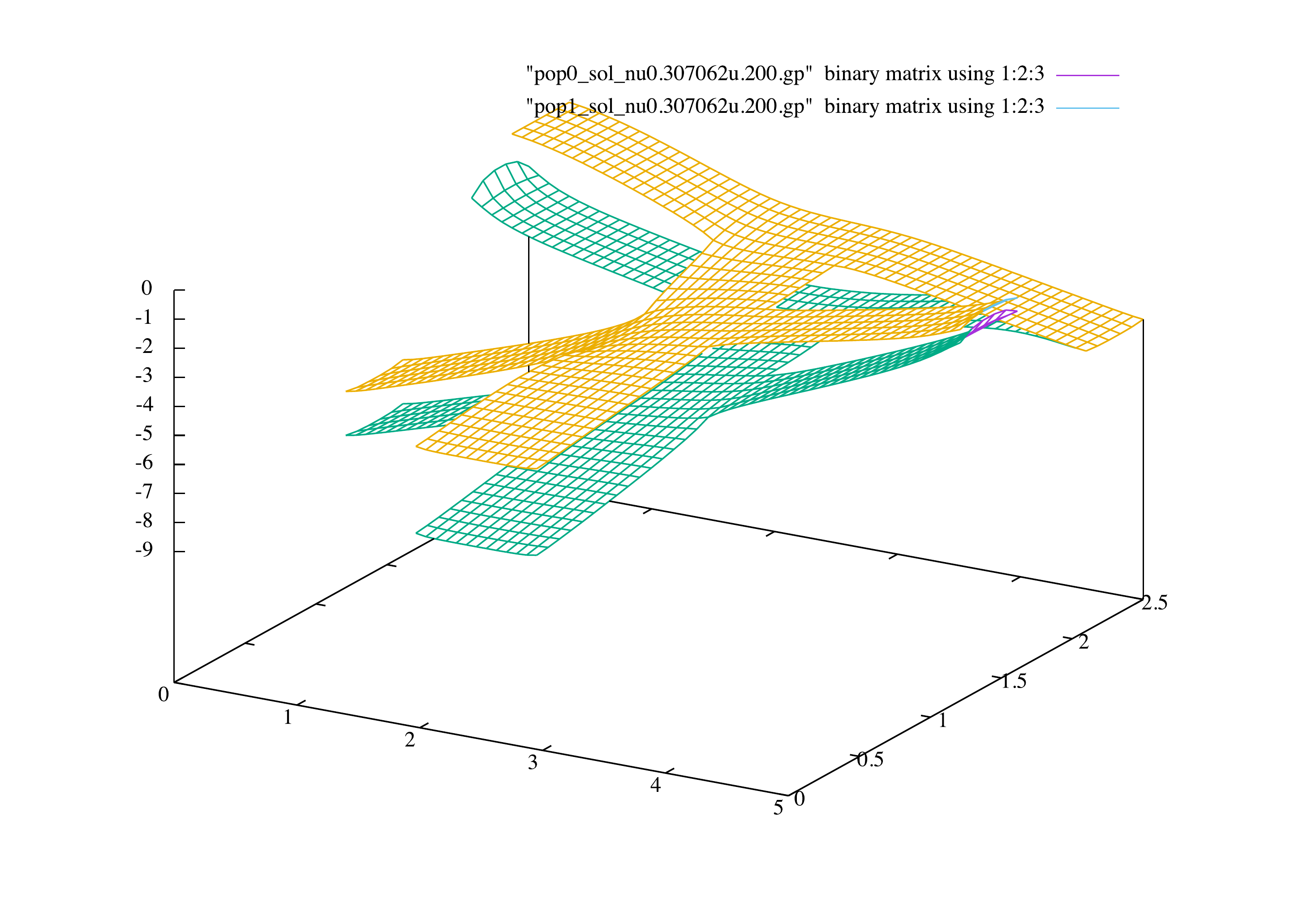}
		\caption{Value functions of the two populations}\label{fig:1.2}
	\end{subfigure}
	\vspace{1em}
	
	\begin{subfigure}[t]{0.45\linewidth}
		\centering
		\includegraphics[width=\linewidth]{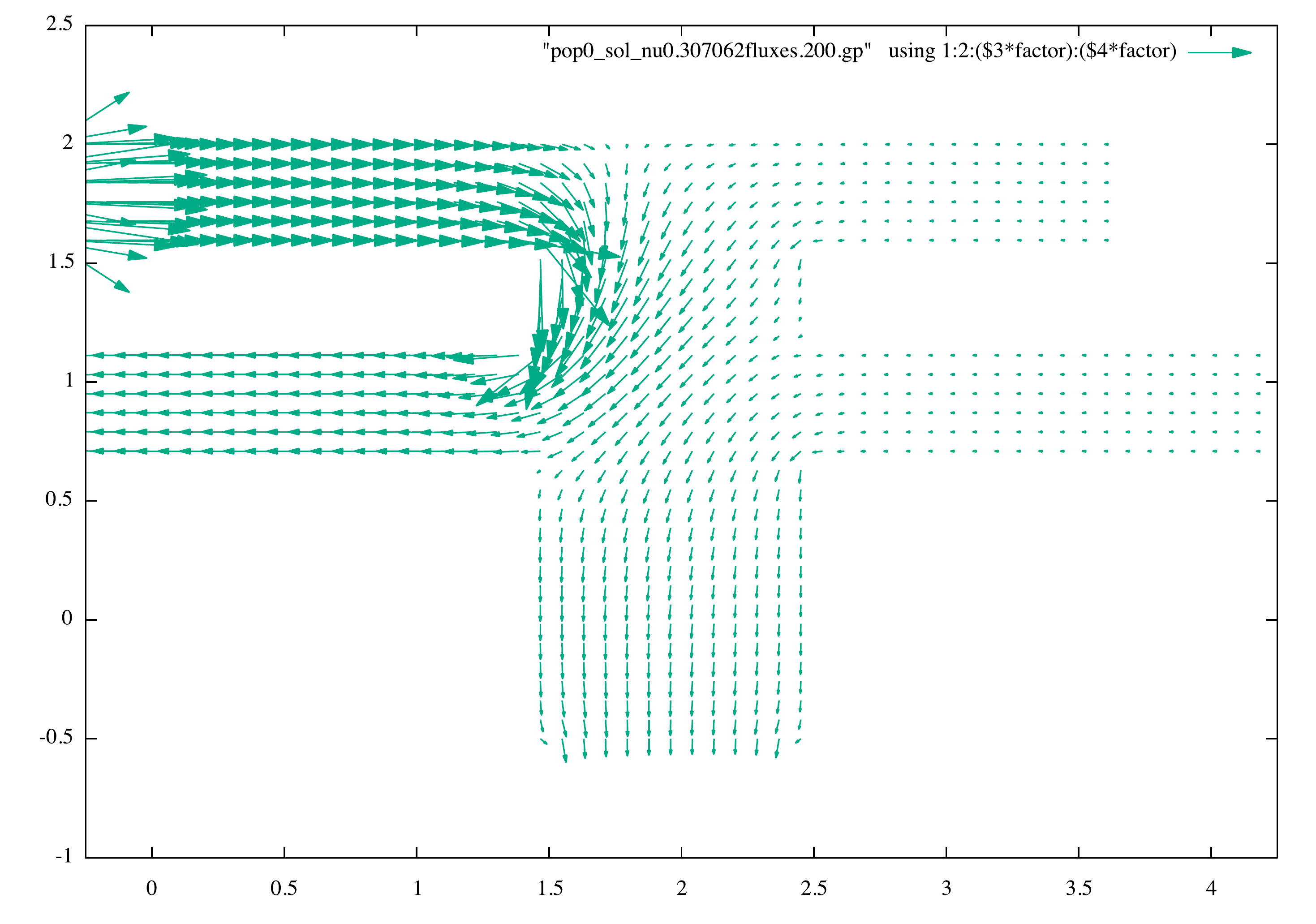}
		\caption{Optimal feedback for population $0$}\label{fig:1.3}
	\end{subfigure}
	\quad
	\begin{subfigure}[t]{0.45\linewidth}
		\centering
		\includegraphics[width=\linewidth]{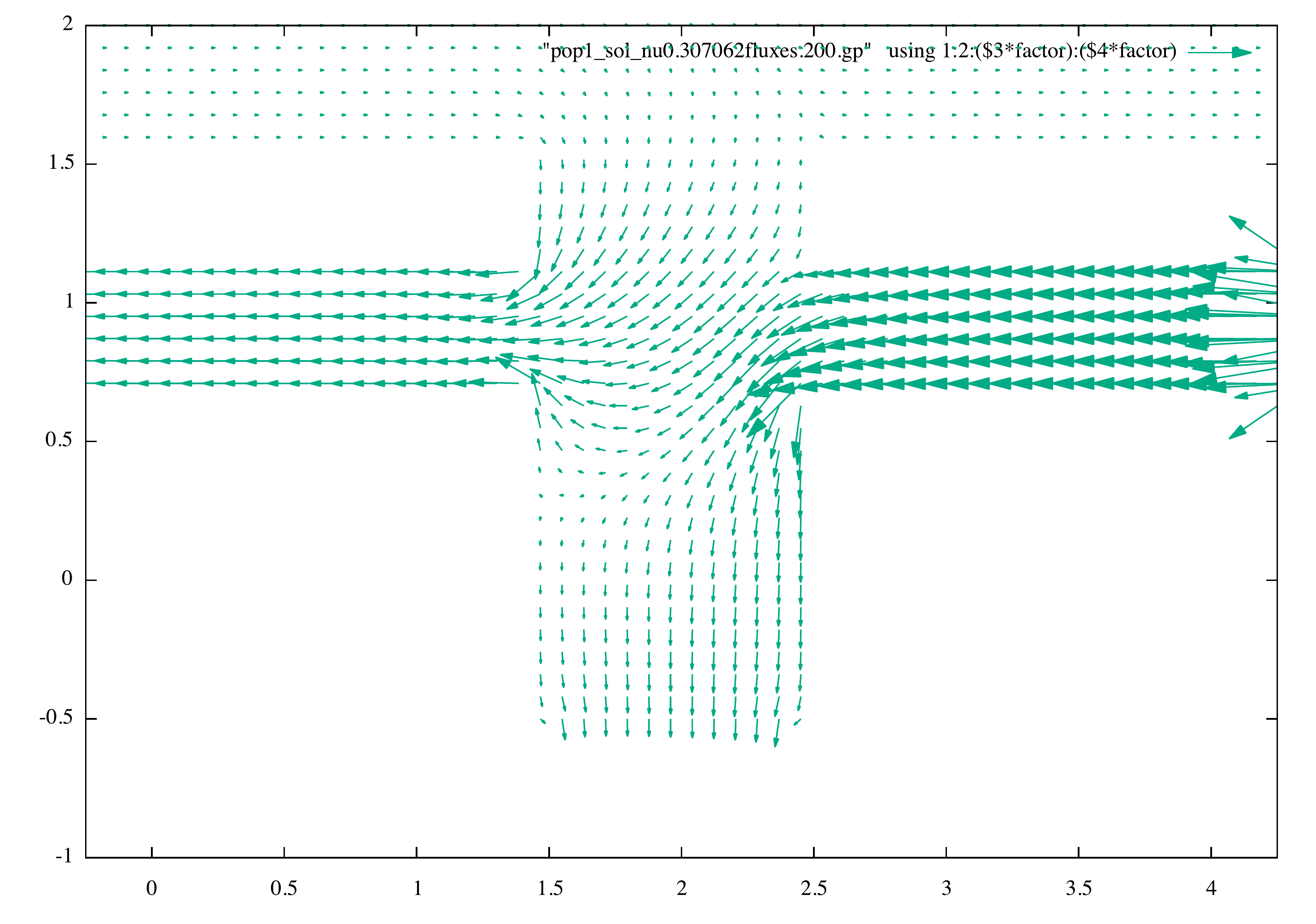}
		\caption{Optimal feedback for population $1$}\label{fig:1.4}
	\end{subfigure}
	\caption{\label{fig:stat_sol_0.31} Numerical Solution to Stationary Equilibrium with $\nu\sim0.3$}
\end{figure}

\subsection{ A stationary equilibrium with $\nu=0.16$}\label{sec:stat-equil-with-3}
We decrease the  viscosity coefficient to the value $\nu=0.16$. We are going to see that the solution is quite different from the one obtained for $\nu=0.3$, because the populations now occupy separate regions. 
In Figure \ref{fig:stat_sol_0.16}, we display the distributions of states for the two populations, see Subfigure~\ref{fig:2.1}, the value functions for both populations, see Subfigure~\ref{fig:2.2}, and the optimal feedback controls of population 0 (respectively 1) in Subfigure~\ref{fig:2.3} (respectively Subfigure~\ref{fig:2.4}).
 We see that population 0 enters the domain via the north-west entrance, and most probably exits by the south-west exit door. Population 1 
enters the domain via the south-east entrance, and  most probably exits the domain by the southern door. The populations occupy almost separate regions.  
We have made simulations with smaller viscosities, up to  $\nu=10^{-3}$, and we have observed that the results are qualitatively similar to the ones displayed on Figure \ref{fig:stat_sol_0.16}, i.e. the distribution of the populations overlap less and less and the optimal strategies are similar. As $\nu$ is decreased, the gradients of the distributions of states increase 
in the transition regions between the two populations.

\begin{figure}	
	\centering
	\begin{subfigure}[t]{0.45\linewidth}
		\centering
		\includegraphics[width=\linewidth]{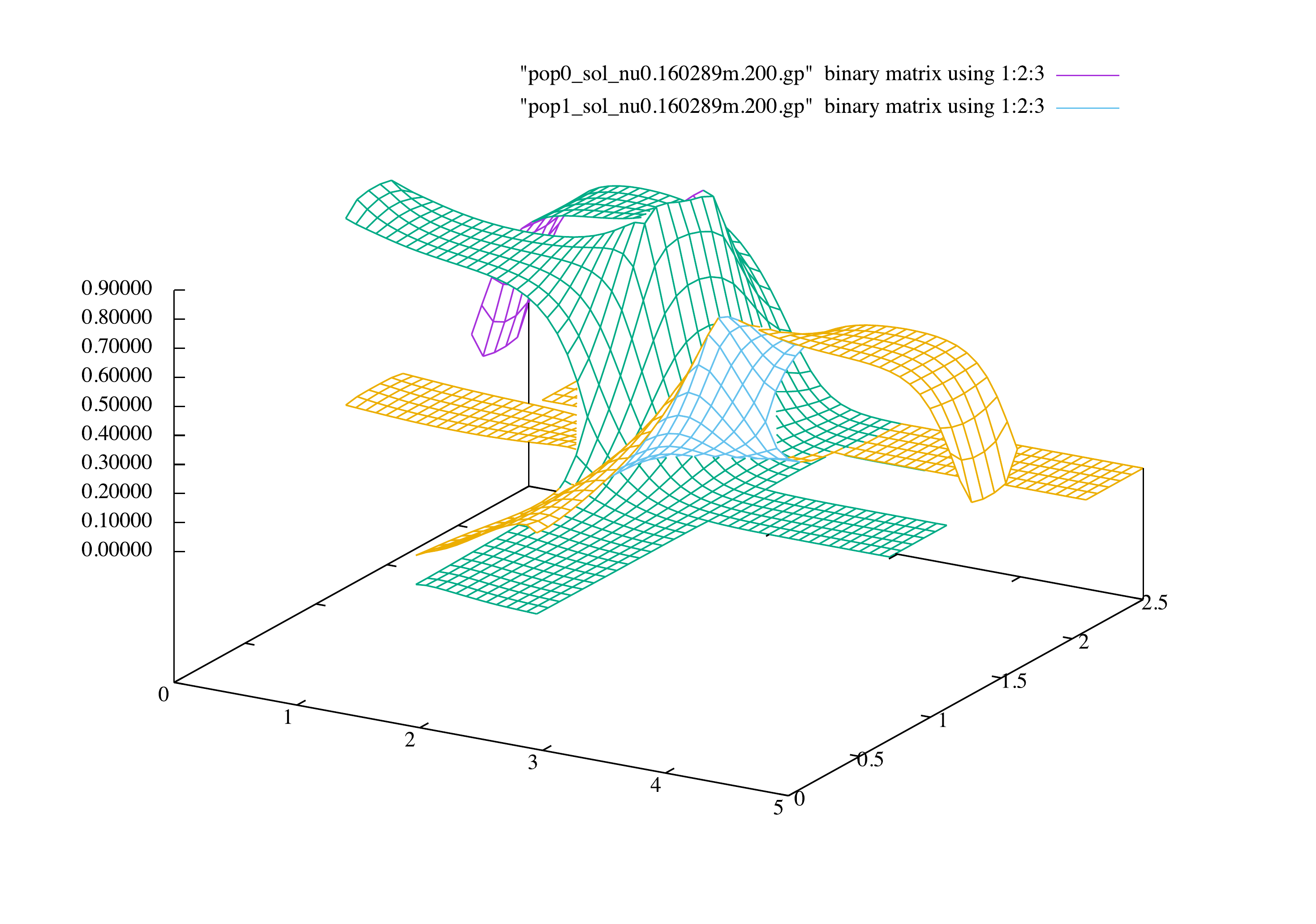}
		\caption{Distributions of the two populations}\label{fig:2.1}
	\end{subfigure}
	\quad
	\begin{subfigure}[t]{0.45\linewidth}
		\centering
		\includegraphics[width=\linewidth]{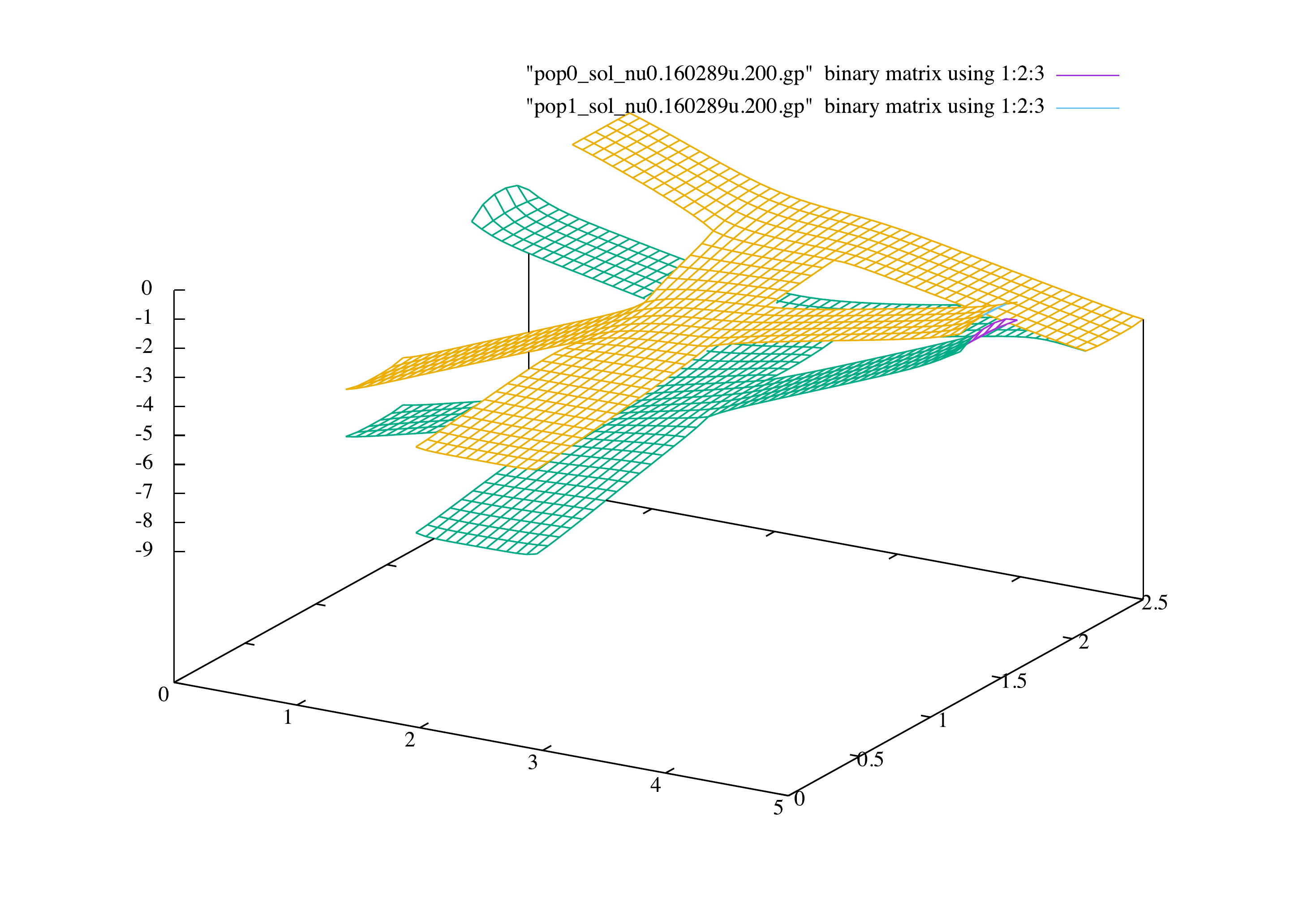}
		\caption{Value functions of the two populations}\label{fig:2.2}
	\end{subfigure}
	\vspace{1em}
	
	\begin{subfigure}[t]{0.45\linewidth}
		\centering
		\includegraphics[width=\linewidth]{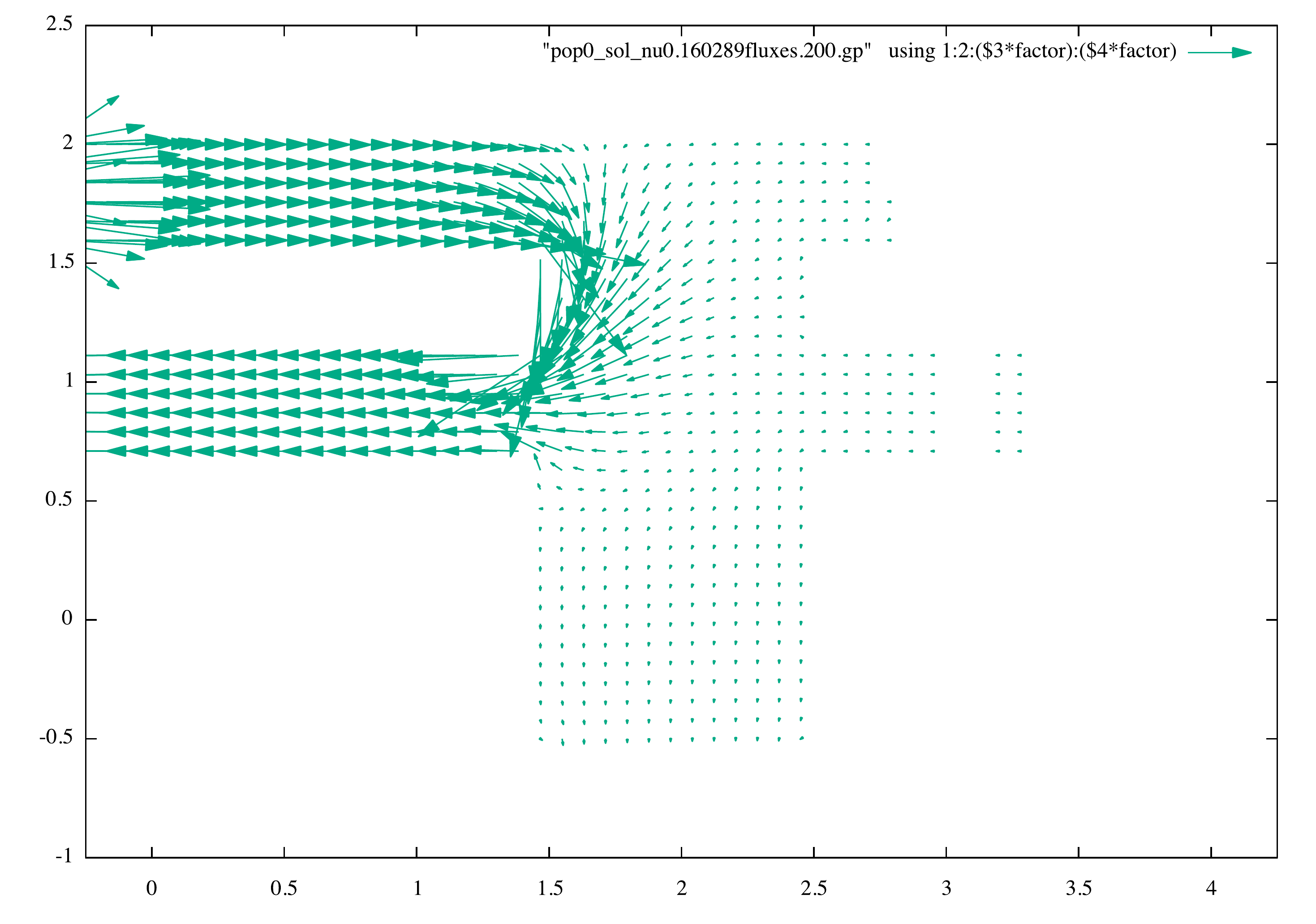}
		\caption{Optimal feedback for population $0$}\label{fig:2.3}
	\end{subfigure}
	\quad
	\begin{subfigure}[t]{0.45\linewidth}
		\centering
		\includegraphics[width=\linewidth]{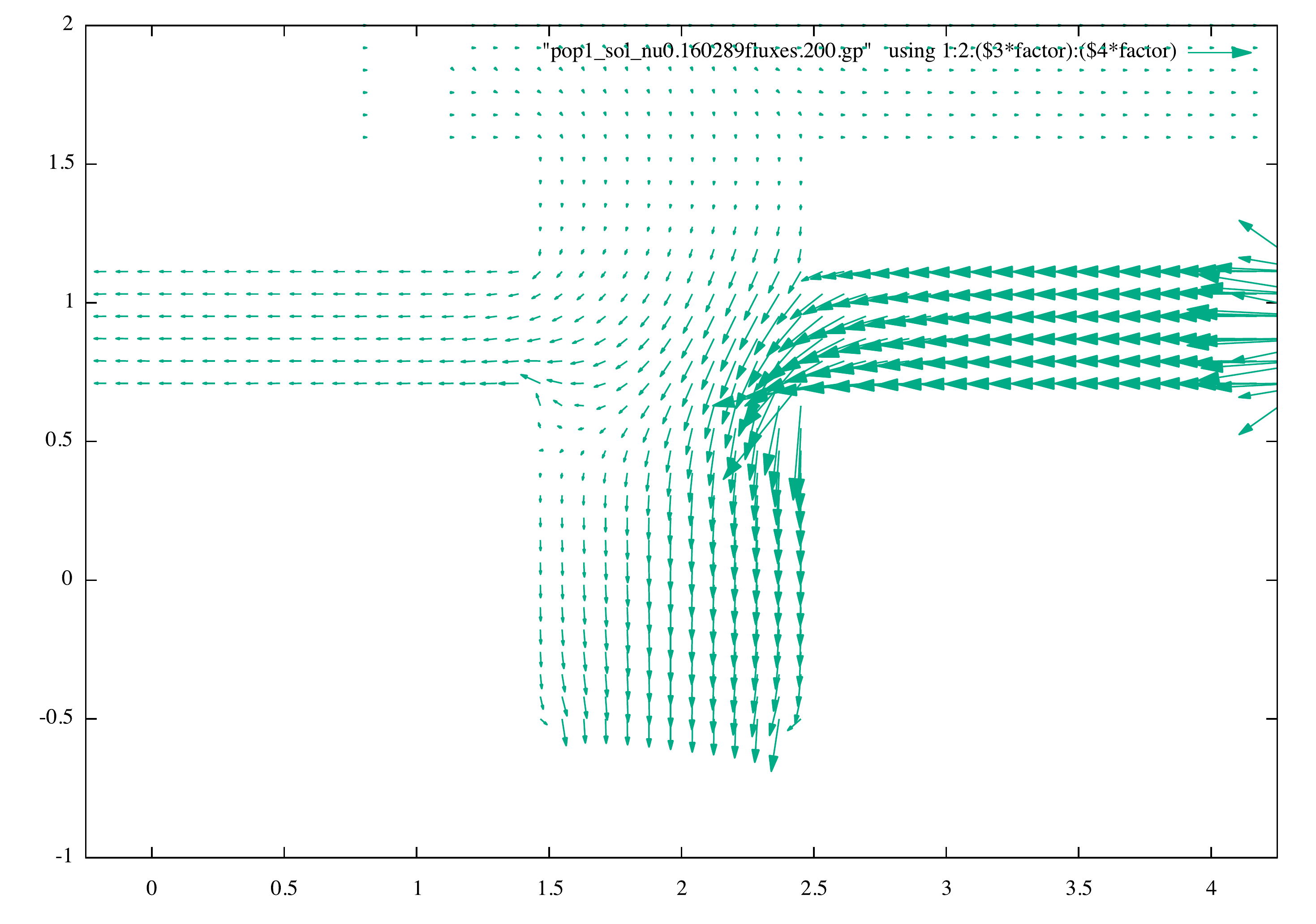}
		\caption{Optimal feedback for population $1$}\label{fig:2.4}
	\end{subfigure}
	\caption{\label{fig:stat_sol_0.16}Numerical Solution to Stationary Equilibrium with $\nu\sim0.16$}
\end{figure}

\subsection{ Algorithm for solving the system of nonlinear equations}\label{sec:algor-solv-syst}
We briefly describe the iterative method used in order to solve the system of nonlinear equations arising from the discretization of the finite horizon problem.
Since the latter system couples  forward and backward (nonlinear) equations, 
it cannot be solved by merely marching in time. 
Assuming that the discrete Hamiltonians are $C^2$ and the coupling functions are $C^1$ (this is true after a regularization procedure involving a small regularization parameter)
 allows us to use a Newton-Raphson method for the whole 
system of nonlinear equations  (which can be huge if  $d\ge 2$). \\

Disregarding the boundary conditions for simplicity, the discrete version of the MFG system reads 
\begin{eqnarray}
  \label{eq:ex:15}\ds \frac{U_{i,j}^{k,n+1}-U_{i,j}^{k,n}}{\Delta t} - \nu (\Delta_h U^{k,n+1})_{i,j} + g\left([\nabla_h U^{k,n+1}]_{i,j},  Z^{k,n+1}_{i,j}\right)
 =  Y^{k,n+1}_{i,j} \\
  \label{eq:ex:16}\ds \frac{M_{i,j}^{k,n+1}-M_{i,j}^{k,n}}{\Delta t} - \nu (\Delta_h M^{k,n+1})_{i,j} - \cT_{i,j} \left(U^{k,n+1}, M^{k,n+1},  Z^{k,n+1}\right) = 0,  \\
  \label{eq:ex:17}\ds Y^{k,n+1}_{i,j} =-\Phi_k\left(x_{i,j},M^{k,n+1}_{i,j},  M^{1-k,n+1}_{i,j}\right) ,\\
  \label{eq:ex:18} \ds Z^{k,n+1}_{i,j}= \left( 1+ M^{k,n+1}_{i,j} +5M^{1-k,n+1}_{i,j}\right)^{-1},
\end{eqnarray}
for $k=0,1$. The system above is satisfied for internal points of the grid, i.e. $2 \le i,j \le N_h - 1$, and is supplemented with suitable boundary conditions.
 The ingredients in (\ref{eq:ex:15})-(\ref{eq:ex:18}) are as follows:
\[
[\nabla_h U]_{i,j} = ((D^+_1 U)_{i,j}, (D^+_1 U)_{i-1,j}, (D^+_2 U)_{i,j}, (D^+_2 U)_{i,j-1})\in \R^4,
\]
and the Godunov discrete Hamiltonian is
\[
\tilde H(q_1,q_2,q_3,q_4,z)= z\left( [(q_1)^-]^2 + [(q_3)^-]^2 + [(q_2)^+]^2 + [(q_4)^+]^2\right).
\]
The transport operator in the discrete Fokker-Planck equation is given by 
\[
\cT_{i,j}(U,M,Z) = \frac{1}{h} \left( \begin{array}{l}
M_{i,j} \partial_{q_1} \tilde H[\nabla_h U]_{i,j}, Z_{i,j}) - M_{i-1,j} \partial_{q_1} \tilde H[\nabla_h U]_{i-1,j},Z_{i-1,j}) \\
\quad + M_{i+1,j} \partial_{q_2} \tilde H[\nabla_h U]_{i+1,j},Z_{i+1,j}) - M_{i,j} \partial_{q_2} \tilde H[\nabla_h U]_{i,j},Z_{i,j}) \\
\quad\quad + M_{i,j} \partial_{q_3} \tilde H[\nabla_h U]_{i,j},Z_{i,j}) - M_{i,j-1} \partial_{q_3} \tilde Hx,[\nabla_h U]_{i,j-1},Z_{i,j-1}) \\
\quad\quad\quad + M_{i,j+1} \partial_{q_4} \tilde H[\nabla_h U]_{i,j+1},Z_{i,j+1}) - M_{i,j} \partial_{q_4} \tilde H[\nabla_h U]_{i,j},Z_{i,j})
\end{array}\right),
\]
 We define the map $\Theta$ : \[ \Theta:\quad \left(Y^{0,n}_{i,j}, Y^{1,n}_{i,j},Z^{0,n}_{i,j}, Z^{1,n}_{i,j} \right)_{i,j,n} \mapsto
  \left(M^{0,n}_{i,j}, M^{1,n}_{i,j}   \right)_{i,j,n},\] 
 by solving first the discrete HJB equation (\ref{eq:ex:15}) (supplemented with boundary conditions)
 then the discrete Fokker-Planck equation (\ref{eq:ex:16}) (supplemented with boundary conditions).  We then summarize (\ref{eq:ex:17}) and (\ref{eq:ex:18}) by writing 
\begin{equation}
  \label{eq:ex:19}
\left(Y^{0,n}_{i,j}, Y^{1,n}_{i,j},Z^{0,n}_{i,j}, Z^{1,n}_{i,j}   \right)_{i,j,n}= \Psi \left(  \left(M^{0,n}_{i,j}, M^{1,n}_{i,j}   \right)_{i,j,n}\right).
\end{equation}
Here $n$ takes its values in  $\{1\dots, N\}$ and   $i,j$ take their values  in  $\{1\dots, N_h\}$.
We then see the full system (\ref{eq:ex:15})-(\ref{eq:ex:18}) (supplemented with boundary conditions) as a fixed point problem for the map $\Xi=\Psi\circ\Theta$.
\\
Note that in (\ref{eq:ex:15}) the two discrete Bellman equations are decoupled and do not involve $M^{k,n+1}$. 
Therefore, one can first obtain $U^{k,n}$ $0\le n\le N$, $k=0,1$ 
by marching backward in time (i.e. performing a backward loop with respect to the index $n$).
For every time index $n$, the two systems of nonlinear equations for $U^{k,n}$, $k=0,1$ are themselves solved by means 
of a nested Newton-Raphson method. 
 Once an approximate solution of (\ref{eq:ex:15}) has been found, 
one can solve the (linear) Fokker-Planck equations (\ref{eq:ex:16}) for $M^{k,n}$ $0\le n\le N$, $k=0,1$,  
by marching forward in time (i.e. performing a forward loop with respect to the index $n$).  
The solutions of (\ref{eq:ex:15})-(\ref{eq:ex:16}) are such that $M^{k,n}$ are nonnegative.
\\
 The fixed point equation for $\Xi$ is solved numerically 
by using a Newton-Raphson method. This  requires the differentiation of both the Bellman and Kolmogorov equations in (\ref{eq:ex:15})-(\ref{eq:ex:16}), which may be done either analytically (as in the present simulations) or via automatic differentiation of computer codes (to the best of our knowledge,  no  computer code for MFGs based on automatic differentiation is available yet, but developing such codes seems doable and interesting).
\\
A good choice of an initial guess  is important, as always for  Newton methods. 
To address this matter, we first observe that the 
above mentioned iterative  method generally quickly converges to a solution when the value of $\nu$ is large.
This leads us to use a continuation method in the variable $\nu$:
 we start solving  (\ref{eq:ex:15})-(\ref{eq:ex:18}) 
with a rather high value of the parameter $\nu$ (of the order of $1$),
 then gradually decrease $\nu$ down to the desired value,
 the solution found for a  value of $\nu$ being used as an initial guess
 for the iterative solution with the next and smaller value of $\nu$.

\section{Mean field type control}
\label{MFTC}

As mentioned in the introduction, the theory of mean field games allows one to study Nash equilibria in games with a number of players tending to infinity. In such models, the players are selfish and try to minimize their own individual cost. 
Another kind of asymptotic regime is obtained by  assuming that all the agents use the same distributed feedback strategy
 and by passing  to the limit as $N\to \infty$ before optimizing the common feedback. For a fixed common feedback strategy, the asymptotic behavior is 
given by the McKean-Vlasov theory, \cite{MR0221595,MR1108185}: the dynamics of a representative agent is found by solving  a stochastic differential equation with coefficients depending on a mean field, namely the statistical distribution of the states, which may also affect the objective function. Since the feedback strategy is common to all agents, perturbations of the latter affect the mean field (whereas in a mean field game, the other players' strategies are fixed when a given player optimizes).  Then, having each agent optimize her objective function amounts to solving a control problem
 driven by the McKean-Vlasov dynamics. The latter is named control of McKean-Vlasov dynamics by R.~Carmona and F.~Delarue~\cite{MR3045029,MR3091726,CarmonaDelarue_book_I} and mean field type control by A.~Bensoussan et al~\cite{MR3037035,MR3134900}.
 Besides the aforementioned interpretation as a social optima in collaborative games with a number of agents growing to infinity, mean field type control problems have also found applications in finance and risk management for problems in which the distribution of states is naturally involved in the dynamics or the cost function.  Mean field type control problems lead to a system of forward-backward PDEs which has some features similar to the MFG system, but which can always be seen as the optimality conditions of a minimization problem. These problems can be tackled using the methods presented in this survey, see e.g.~\cite{MR3498932,MR3575615}. For the sake of comparison with mean field games, we provide in this section an example of crowd motion (with a single population) taking into account congestion effects. The material of this section is taken from~\cite{MR3392611}.

\subsection{Definition of the problem}
Before focusing a specific example, let us present the generic form of a mean field type control problem. To be consistent with the notation used in the MFG setting, we consider the same form of dynamics and the same running and terminal costs functions $f$ and $\phi$. However, we focus on a different notion of solution: Instead of looking for a fixed point, we look for a minimizer when the control directly influences the evolution of the population distribution. More precisely, the goal is to find a feedback control $v^*: Q_T\to \RR^d$ minimizing the following functional:
\begin{align*}
	J: v \mapsto J(v) = \EE \left[\int_0^T f(X_t^v, m^v(t,X_t^v), v(t,X_t^v) ) dt + \phi(X_T^v, m^v(T,X_T^v)) \right]
\end{align*}
under the constraint that the process $X^v = (X_t^v)_{t \ge 0}$ solves the stochastic differential equation (SDE)
\begin{equation}
\label{eq:dyn-X-general-MFTC}
	d X_t^v = b(X_t^v, m^v(t,X_t^v), v(t, X_t^v)) dt + \sqrt{2 \nu} d W_t, \qquad t \ge 0,
\end{equation}
and $X_0^v$ has distribution with density $m_0$. Here $m^v_t$ is the probability density of the law of $X_t^v$, so the dynamics of the stochastic process is of McKean-Vlasov type. 

For a given feedback control $v$, $m^v_t$ solves the same Kolmogorov-Fokker-Planck (KFP) equation~\eqref{eq:intro-MFG-KFP} as in the MFG, but the key difference between the two problems lies in the optimality condition. For the mean field type control problem, one can not rely on standard optimal control theory because the distribution of the controlled process is involved in a potentially non-linear way.

In \cite{MR3134900}, A. Bensoussan, J. Frehse and P. Yam have proved that a necessary condition
for the existence of a smooth feedback function $v^* $ achieving
$J( v^*)= \min J(v)$ is that
$$
	v^*(t,x) = \argmax_{a \in \RR^d} \big\{ -f(x, m(t,x), a) - \langle  b(x, m(t,x), a) , \nabla u(t,x) \rangle  \big\},
$$
where $(m,u)$ solve the
following system of partial differential equations
\begin{subequations}
     \begin{empheq}[left=\empheqlbrace]{align*}
     	\displaystyle
	&-\frac{\partial u}{\partial t}(t,x)  - \nu \Delta u(t,x) + H(x, m(t,x), \grad u(t,x))  + \int_{\TT^d} \frac{\partial H}{\partial m}(\xi, m(T,\xi), \grad u(t, \xi)) m(T,\xi) d\xi =0, 
	&&\hbox{ in } [0,T) \times \TT^d,
	\\
	\displaystyle
	&\frac{\partial m}{\partial t}(t,x)  - \nu \Delta m(t,x) - \div\left( m(t,\cdot) H_p (\cdot, m(t,\cdot), \grad u(t,\cdot)) \right)(x) = 0, 
	&&\hbox{ in } (0,T] \times \TT^d,
	\\
	&u(T,x) = \phi(x, m(T,x)) + \int_{\TT^d} \frac{\partial \phi}{\partial m}(\xi, m(T,\xi)) m(T,\xi) d\xi , \qquad m(0,x) = m_0(x), 
	&&\hbox{ in } \TT^d.
     \end{empheq}
\end{subequations}
Here, $\frac{\partial}{\partial m}$ denotes a derivative with respect to the argument $m$, which is a real number because the dependence on the distribution is purely local in the setting considered here. When the Hamiltonian depends on the distribution $m$ in a non-local way, one needs to use a suitable notion of derivative with respect to a probability density or a probability measure, see e.g.~\cite{cardaliaguet2010,MR3134900,CarmonaDelarue_book_I} for detailed explanations.

\subsection{Numerical simulations}
\label{sec:some-simulations}
Here we model a situation in which a crowd of pedestrians   is driven to leave a given square hall (whose side is 50 meters long) containing rectangular  obstacles:
 one can imagine for example a situation of panic in
a closed building, in which the population tries to reach the exit doors. The chosen geometry is represented on Figure~\ref{fig:MFTC-geom-m0}.
\begin{figure}
	\centering
	\begin{subfigure}[t]{0.4\linewidth}
		\centering
		\includegraphics[height=4cm]{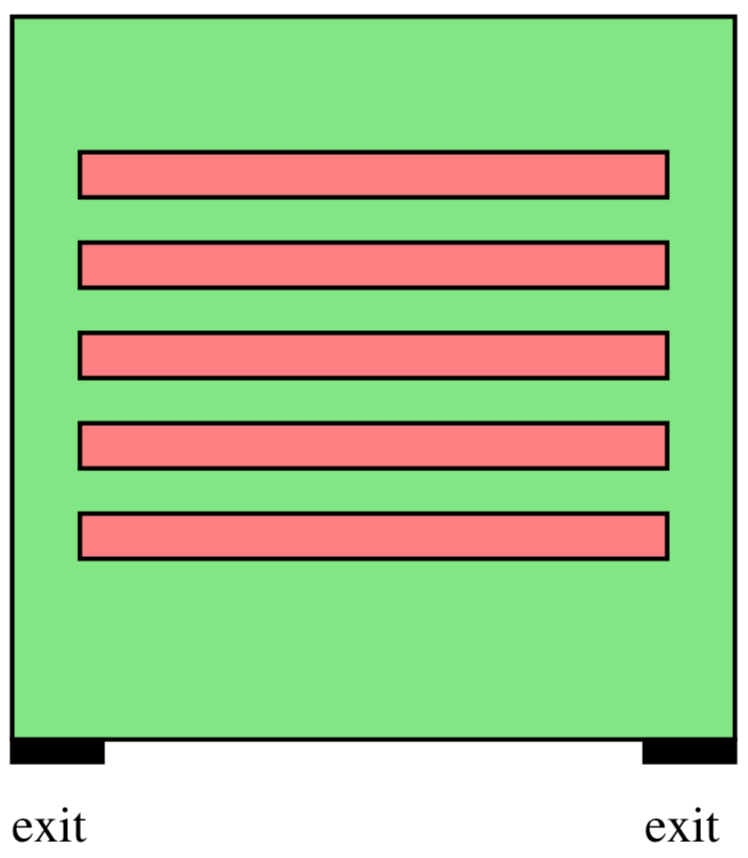}
	\end{subfigure}
	\quad
	\begin{subfigure}[t]{0.4\linewidth}
		\centering
	  	\includegraphics[height=5cm]{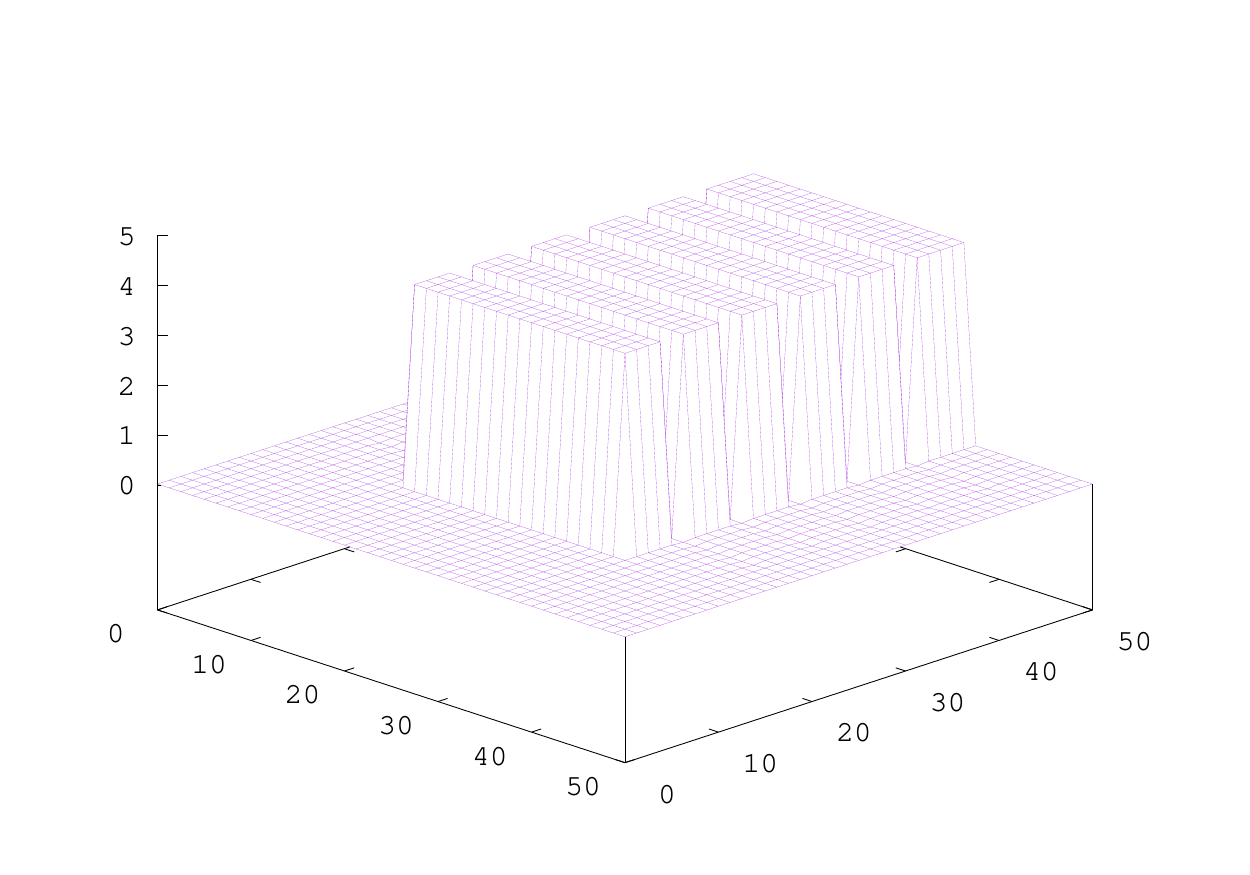}
	\end{subfigure}
	\caption{\label{fig:MFTC-geom-m0} Left: the geometry (obstacles are in red). Right: the density at $t=0$.}
\end{figure}

The aim is to compare the evolution of the density in two models:
\begin{enumerate}
\item Mean field games: we choose $\nu=0.05$ and  the Hamiltonian takes congestion effects
 into account  and depends locally  on $m$; more precisely:
  \begin{displaymath}
    H(x, m, p)= -\frac {8 |p|^2} {(1+m)^{\frac 3 4}} + \frac 1 {3200} \,.
  \end{displaymath}
The MFG  system of PDEs ~\eqref{eq:PDE-system-MFG} becomes
\begin{subequations}
     \begin{empheq}[left=\empheqlbrace]{align}
     	\displaystyle
	\frac {\partial  u}{\partial t}  +   0.05 \;  \Delta  u
     -\frac {8} {(1+m)^{\frac 3 4}} \;   |\nabla u|^2 &=  -    \frac  1  {3200} \,, 
	\\
	\displaystyle
	\frac {\partial  m}{\partial t}  -   0.05\;  \Delta m   - 16 \, \div\left(   \frac { m \nabla u } {(1+m)^{\frac 3 4}}  \right) &= 0 \,.
	\label{eq:MFG-KFP-congestion}
     \end{empheq}
\end{subequations}
The horizon $T$  is $T=50$ minutes. There is no terminal cost, i.e. $\phi \equiv 0$. 

There are two exit doors, see Figure~\ref{fig:MFTC-geom-m0}. The part of the boundary corresponding to the doors is called  $\Gamma_D$. The boundary conditions at the exit doors are chosen as follows:  there is a Dirichlet condition for $u$ on $\Gamma_D$, corresponding to an exit cost; in our simulations, we have chosen $u=0$ on $\Gamma_D$.
For $m$, we may assume that $m =0$ outside the domain, so we also get the Dirichlet condition  $m=0$ on $\Gamma_D$.

The boundary $\Gamma_N$  corresponds to the solid walls of the hall and of the obstacles.
A natural boundary condition for $u$ on $\Gamma_N$ is a homogeneous Neumann boundary condition, i.e.
$ \frac{\partial u}{\partial n}=0$ which says that the velocity of the pedestrians is tangential to the walls.
 The natural condition for the density $m$ is that
$\nu \frac {\partial  m}{\partial n}+m     \frac {\partial \tilde H} {\partial p} (\cdot, m, \nabla  u )\cdot n=0$, therefore $ \frac {\partial  m}{\partial n}=0$
on $\Gamma_N$ .
\item Mean field type control:  this is the situation where pedestrians or robots use the same feedback law (we may imagine that they follow the strategy  decided by a leader); we keep the same Hamiltonian, and the HJB equation becomes
  \begin{equation}
    \label{eq:71}
   \frac {\partial  u}{\partial t}  +   0.05 \;  \Delta  u   -
     \left( \frac {2} {(1+m)^{\frac 3 4}}    +   \frac {6} {(1+m)^{\frac 7 4}}\right) \;   |\nabla u|^2= -     \frac  1  {3200}.
  \end{equation}
while \eqref{eq:MFG-KFP-congestion} and the boundary conditions are unchanged.
\end{enumerate}
The initial density $m_0$ is piecewise constant and takes two values $0$ and $4$ people/m$^2$, see Figure \ref{fig:MFTC-geom-m0}.
At $t=0$, there are 3300 people in the hall.

We use the method described in section~\ref{sec:newton-algorithms}, i.e., Newton iterations with the finite difference scheme originally proposed in \cite{MR2679575},
see \cite{MR3135339} for some details on the implementation.

On Figure~\ref{fig:MFTC-MFG-congestion-evol-m}, we plot the density $m$ obtained by the simulations for the two models, at $t=1$, $2$, $5$ and $15$ minutes.
With both models, we see that the pedestrians rush towards the narrow corridors leading to the exits, at the left and right sides of the hall, and that the density reaches high values
at the intersections of corridors; then congestion effects explain why the velocity is low (the gradient of $u$)
in the regions where the density is high, see Figure~\ref{fig:MFTC-MFG-congestion-evol-gradu-cmp}. We see on Figure~\ref{fig:MFTC-MFG-congestion-evol-m} that the mean field type control leads to
 lower peaks of density, and on Figure~\ref{fig:MFTC-MFG-congestion-evol-totalmass} that it leads to a faster exit of the room. We can hence infer that the mean field type control performs better than the mean field game, leading to a positive {\sl price of anarchy}.

 \begin{center}
   \includegraphics[width=7cm]{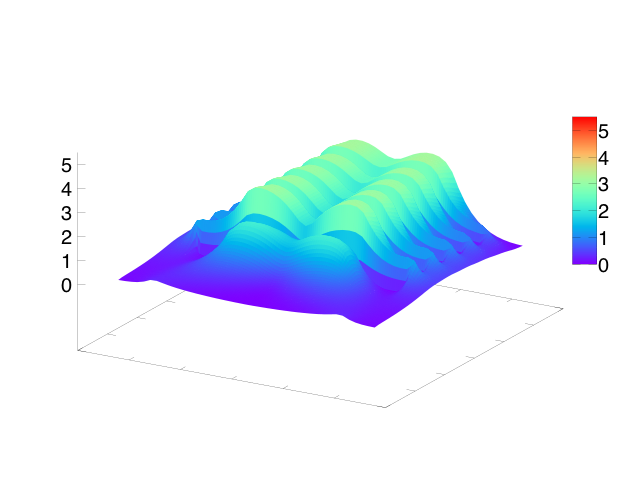} \quad \includegraphics[width=7cm]{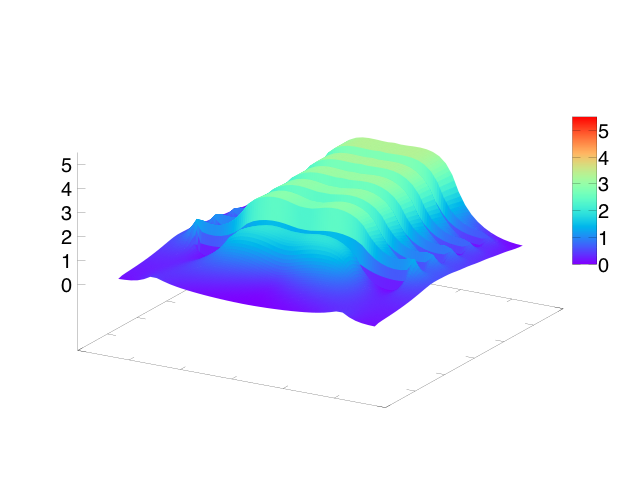}
   \\    \vskip -1cm
    \includegraphics[width=7cm]{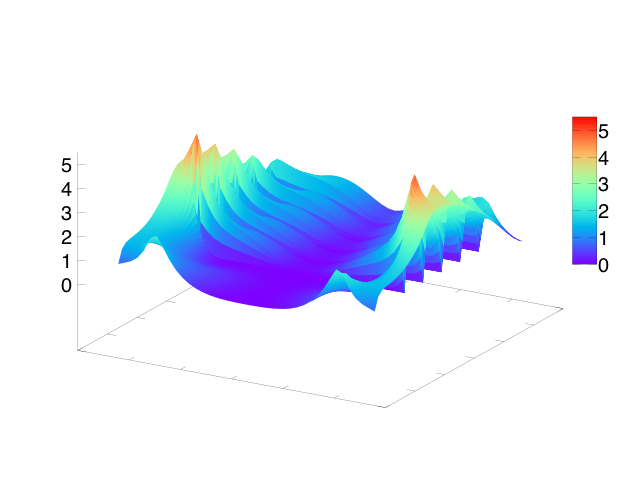} 
  \quad  
   \includegraphics[width=7cm]{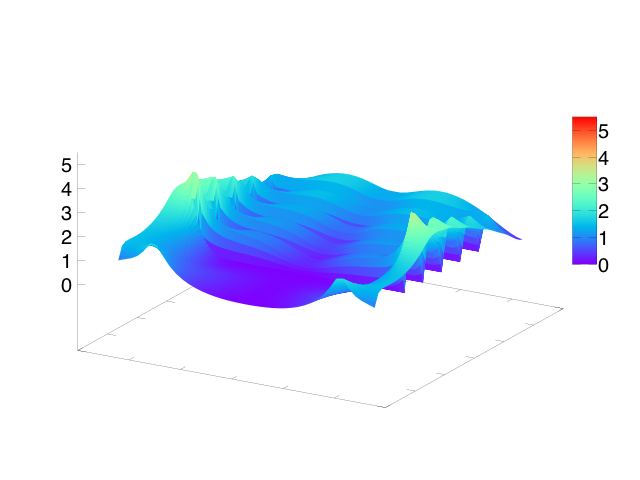}
   \\    \vskip -1cm
   \includegraphics[width=7cm]{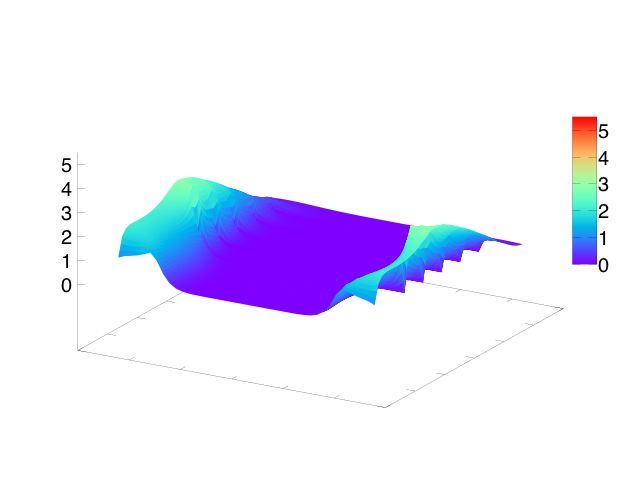}
  \quad  
  \includegraphics[width=7cm]{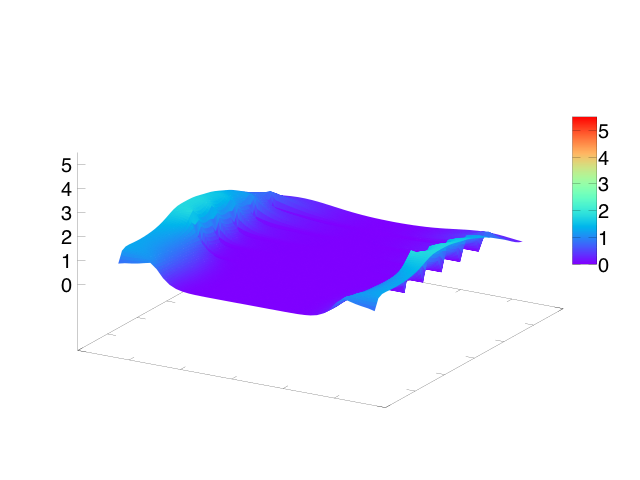}
     \vskip -1cm
  \captionof{figure}{ \label{fig:MFTC-MFG-congestion-evol-m} The density computed with the two models at different dates, $t=1,5$ and $15$ minutes (from top to bottom). Left: Mean field game. Right: Mean field type control.}
 \end{center}

 \begin{center}
   \includegraphics[width=7cm]{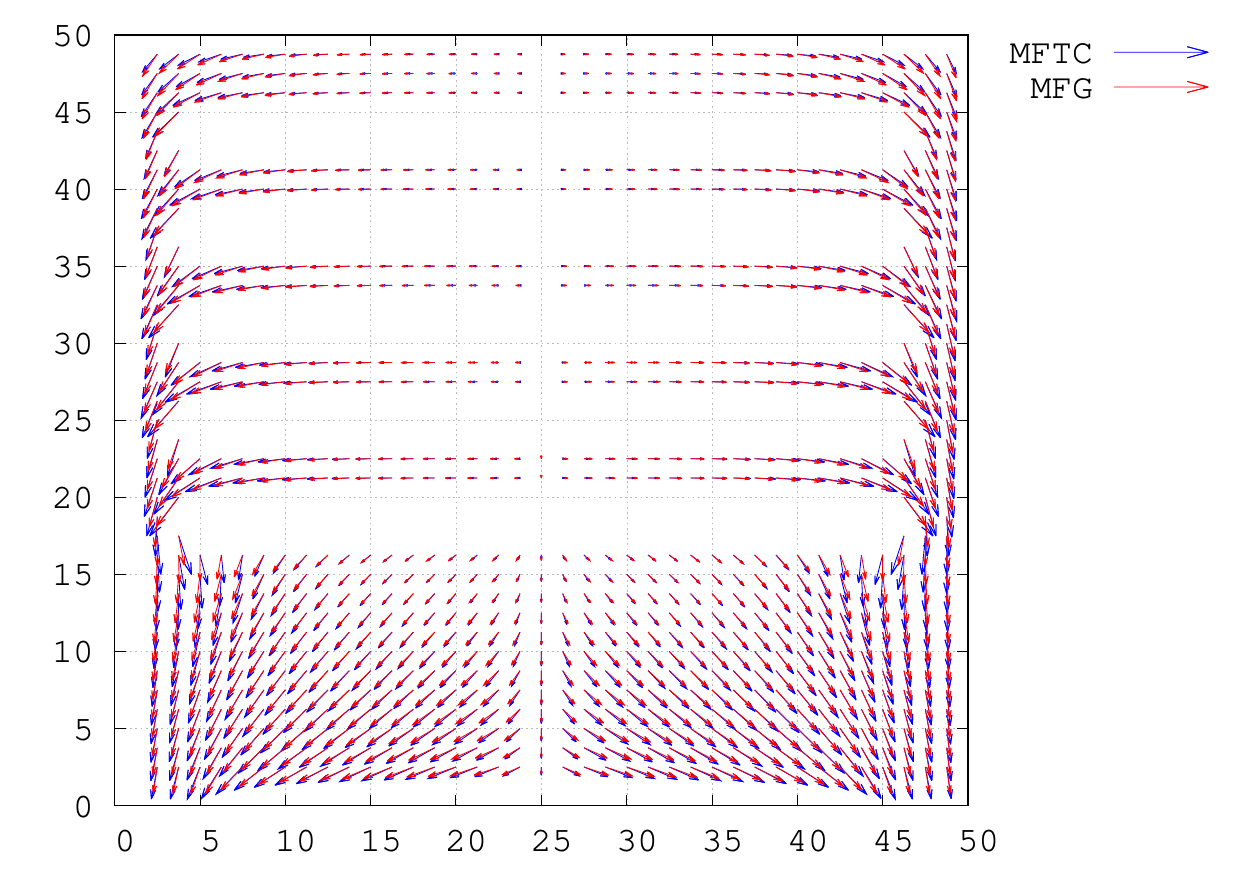}\quad
   \includegraphics[width=7cm]{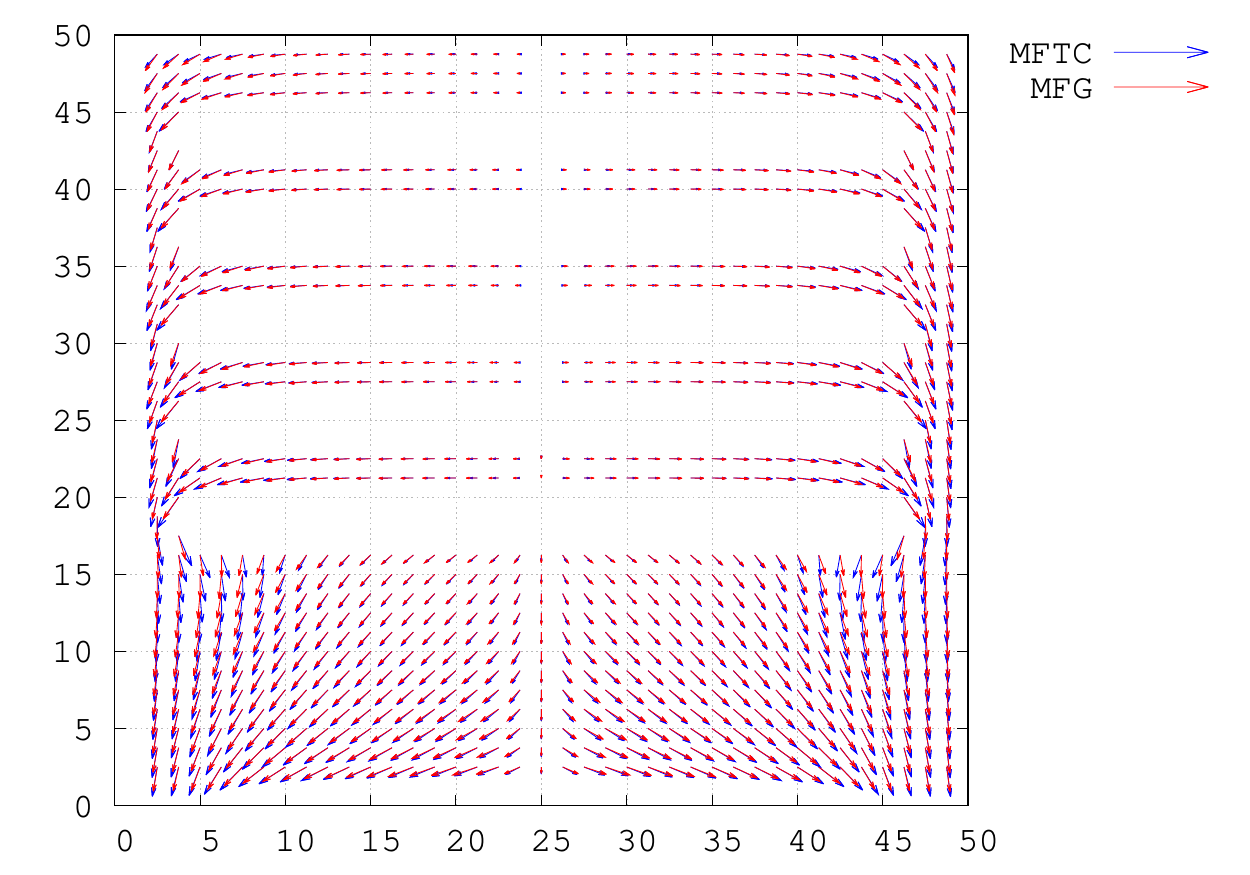}
   \\
   \includegraphics[width=7cm]{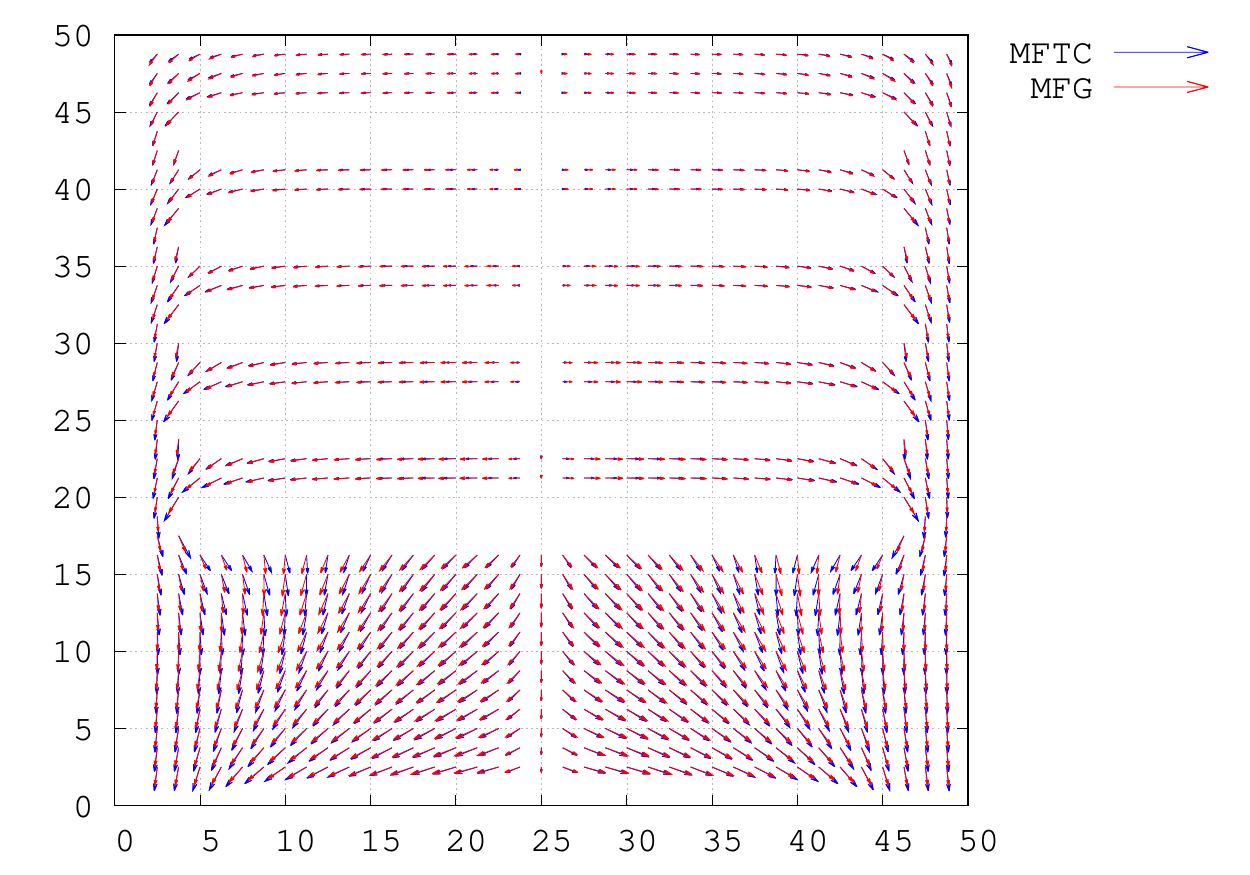}\quad  \includegraphics[width=7cm]{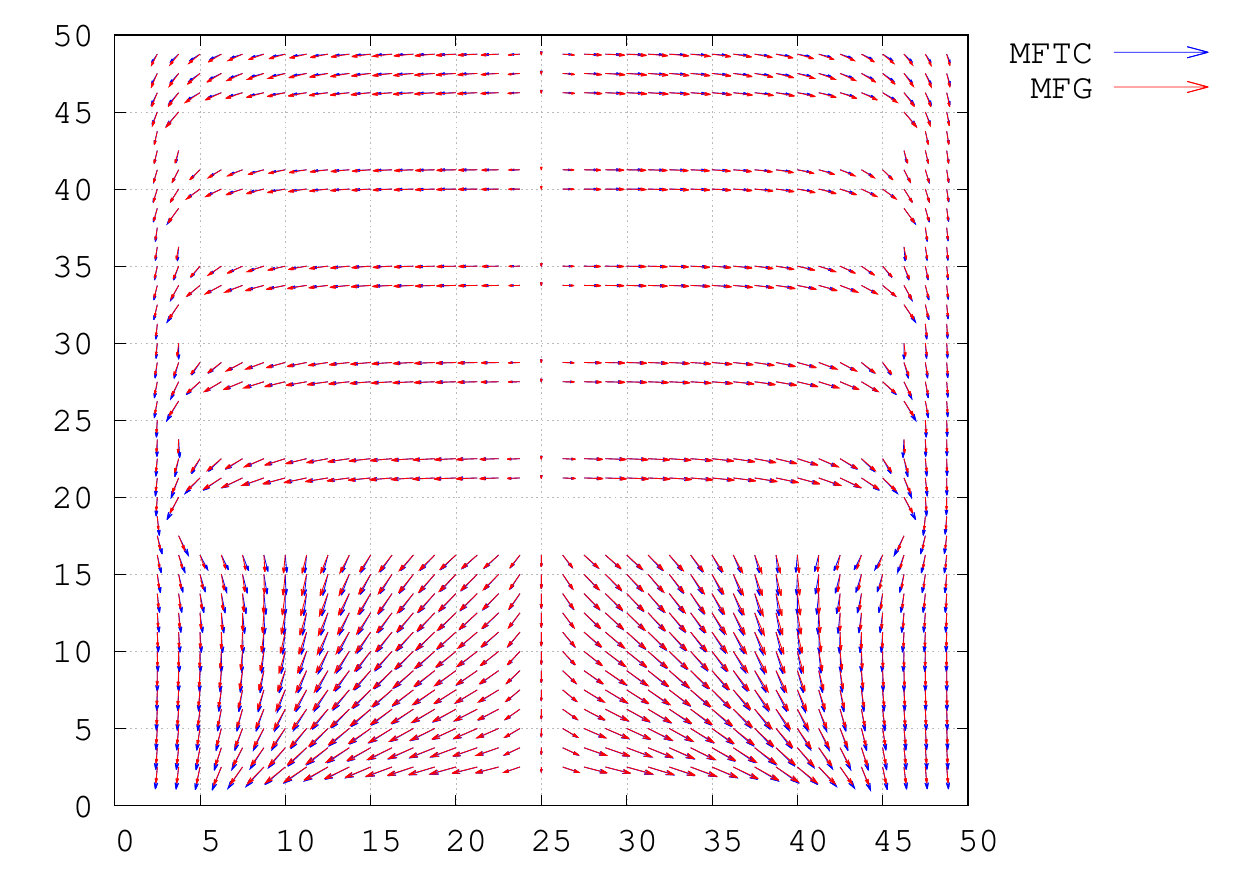}
    \captionof{figure}{ \label{fig:MFTC-MFG-congestion-evol-gradu-cmp} The velocity ($v(t,x) = -16 \nabla u(t,x) (1+m(t,x))^{-3/4}$) computed with the two models at different dates, $t=1,2,5$ and $t=15$ minutes (from top-left to bottom-right). Red: Mean field game. Blue: Mean field type control. }
 \end{center}

\begin{figure}[H] 
	\centering
	\begin{subfigure}[t]{0.6\linewidth}
		\centering
		\includegraphics[height=5cm]{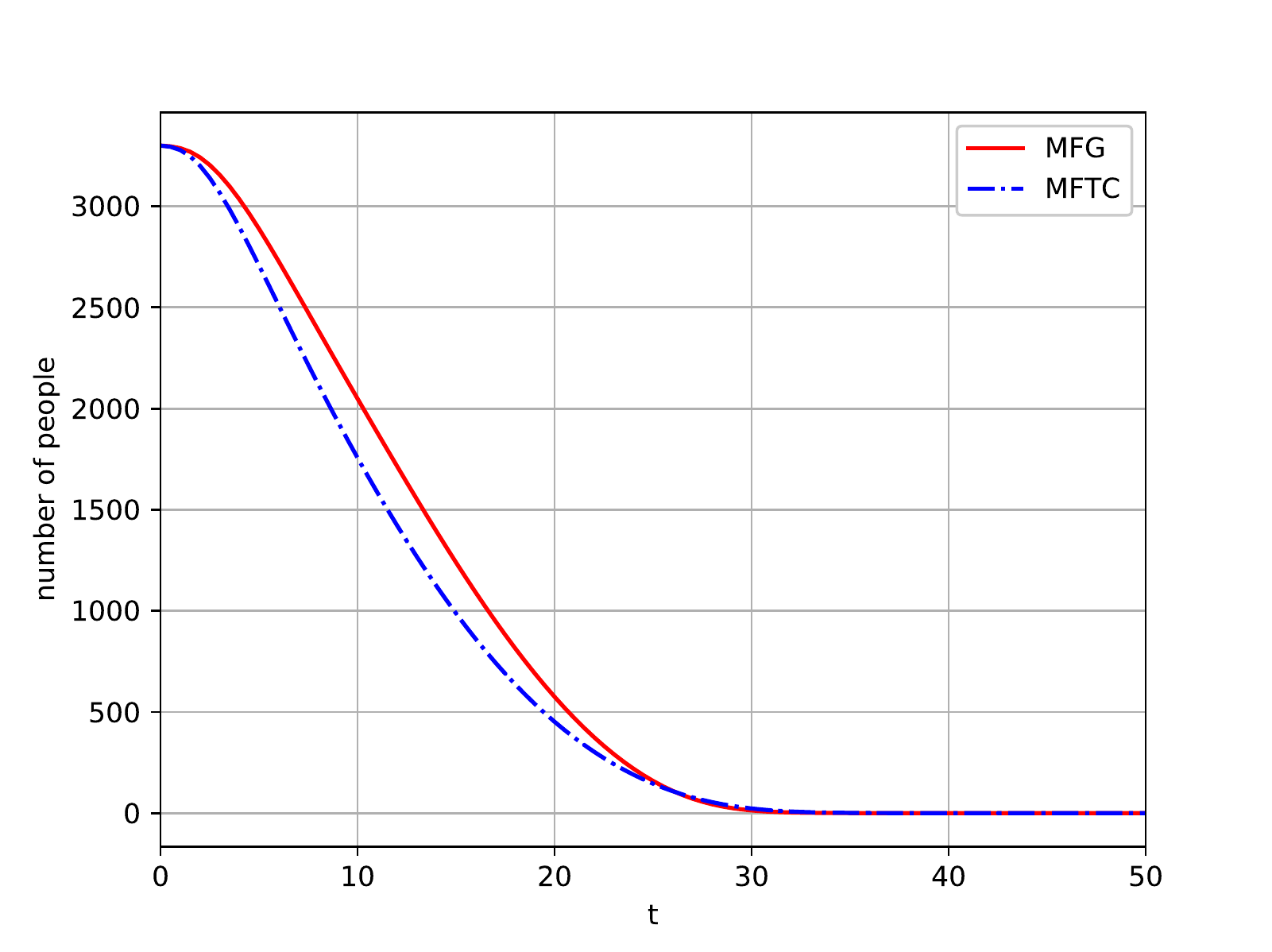}
	\end{subfigure}
	\caption{\label{fig:MFTC-MFG-congestion-evol-totalmass} Evolution of the total remaining number of people in the room for the mean field game (red line) and the mean field type control (dashed blue line).}
\end{figure}

\section{MFGs in macroeconomics}\label{sec:mfgs-macroeconomics}
The material of this section  is taken from a  work of the first author with  Jiequn Han,  Jean-Michel Lasry,  Pierre-Louis Lions,  and Benjamin Moll, 
 see~\cite{achdou2017income}, see also~\cite{MR3268061}. 
In economics, the ideas underlying MFGs have been investigated in the so-called
   {\sl heterogeneous agents models}, see  for example~\cite{bewley, huggett, aiyagari,krusell-smith}, for a long time before the mathematical formalization of Lasry and Lions.
The latter models mostly involve discrete in time  optimal control problems, because the economists who proposed them felt  more comfortable with the related mathematics. 
The connection  between heterogeneous agents models and MFGs is discussed in~\cite{MR3268061}. 
\\
We will first describe the simple yet prototypical Huggett model, see~\cite{huggett, bewley}, in which  the  constraints on the state variable play a key role.   We will then discuss the finite difference scheme for the related system of differential equations, putting the stress on the boundary conditions. Finally, we will present numerical simulations of the richer  Aiyagari
 model, see~\cite{aiyagari}.\\
Concerning HJB  equations, state constraints and related numerical schemes have been much studied in the literature since the pioneering works of Soner and Capuzzo Dolcetta-Lions, see~\cite{MR838056,MR861089,MR951880}. On the contrary, the boundary conditions  for the related invariant measure  had not been addressed before~\cite{MR3268061,achdou2017income}. The recent references~\cite{cannarsa2017existence,cannarsa2018mean} contain a rigorous study of MFGs with state constraints in a different setting from the economic models considered below.

\subsection{A prototypical model with heterogeneous agents: the Huggett model}

\paragraph{\bf The optimal control problem solved by an agent}
We consider households which are heterogeneous in  wealth $x$ and income $y$. The dynamics of the wealth of a  household is given by 
\begin{displaymath}     
 dx_t = (r x_t -c_t + y_t) dt,
    \end{displaymath}
where $r$ is an interest rate and $c_t$ is the consumption (the control variable). The income $y_t$ is a two-state Poisson process with intensities $\lambda_1$ and $\lambda_2$, i.e.
$y_t \in \{ y_1, y_2\}$ with $y_1<y_2$, and 
      \begin{displaymath} 
        \begin{split}
          &\PP\left(y_{t+\dt} =y_1\right| \left.  y_{t} =y_1 \right)= 1- \lambda_1 \dt +o(\dt), \quad     \PP\left(y_{t+\dt} =y_2\right| \left.  y_{t} =y_1 \right)= \lambda_1 \dt +o(\dt) ,
 \\
          &   \PP\left(y_{t+\dt} =y_2\right| \left.  y_{t} =y_2 \right)= 1-\lambda_2 \dt +o(\dt), \quad     
          \PP\left(y_{t+\dt} =y_1\right| \left.  y_{t} =y_2 \right)= \lambda_2 \dt +o(\dt).
        \end{split}
      \end{displaymath}
Recall that a negative wealth means debts; there is a {\sl borrowing constraint}: the wealth of a given household cannot be less than a given {\sl borrowing limit}  $\underline{x}$. 
In the terminology of control theory,  $  x_t\ge  \underline{x}$ is a constraint on the state variable.\\
A  household solves the optimal control problem
    \begin{equation*}
      \max_{\{c_t\}}  \mathbb{E}\int_0^\infty e^{-\rho t} u(c_t)dt \quad \hbox{subject to }
      \left\{
        \begin{array}[c]{rcl}
          dx_t &=& (y_t + r x_t - c_t)dt,\\
          x_t  &\geq& \underline{x},    
        \end{array}
      \right.   
    \end{equation*}
    where
   \begin{itemize}   
     \item $\rho$ is a positive discount factor
     \item $u$ is a  utility function,  strictly increasing and strictly concave, e.g. the CRRA (constant relative risk aversion) utility:
       \[ u(c)=c^{1-\gamma}/(1-\gamma),\qquad \gamma>0.\]
     \end{itemize}
For $i=1,2$, let $ v_i(x)$ be the value of the optimal control problem solved by an agent when her wealth is $x$ and her income $y_i$.  
The value functions $(v_1(x),v_2(x))$ satisfy the system of differential equations:
 \begin{eqnarray}\label{eq:macroeco:1}
   -\rho v_1 + H (x,y_1, \partial_x v_1) + \lambda_1 v_2(x)-\lambda_1 v_1(x)=0,\quad
 x>\underline{x},\\\label{eq:macroeco:2}
   -\rho v_2 + H (x,y_2, \partial_x v_2) + \lambda_2 v_1(x)-\lambda_2 v_2(x)=0,\quad x>\underline{x},
 \end{eqnarray}
where the Hamiltonian $H$ is given by
\begin{equation*}
H(x,y,p)= \max_{c \ge 0}    \Bigl(   (y+rx-c)p +u(c) \Bigr).
\end{equation*}
The system of differential equations (\ref{eq:macroeco:1})-(\ref{eq:macroeco:2}) must be supplemented with boundary conditions connected to the state constraints  $x\ge\underline{x}$.
Viscosity solutions of Hamilton-Jacobi equations with such boundary conditions have been studied in~\cite{MR838056,MR861089,MR951880}. 
It is convenient to introduce the  non-decreasing and non-increasing envelopes $H^{\uparrow}$ and $H^{\downarrow}$ of $H$:
\begin{equation*}
  \begin{split}
H^\uparrow (x,y,p)&= \max_{0\le c \le y+rx}    \Bigl(   (y+rx-c)p +u(c) \Bigr),\\
H^\downarrow (x,y,p)&= \max_{  \max(0,y+rx)\le c}    \Bigl(   (y+rx-c)p +u(c) \Bigr).    
  \end{split}
\end{equation*}
It can be seen that 
\begin{displaymath}
  H(x,y,p)=H^\uparrow (x,y,p)+H^\downarrow (x,y,p)- \min_{p\in \R} H(x,y,p).
\end{displaymath}
 The boundary conditions associated to the state constraint can be written
 \begin{eqnarray}\label{eq:macroeco:3}
   -\rho v_1  (\underline{x}) + H^\uparrow ( \underline{x},y_1, \partial_x v_1) + \lambda_1 v_2 (\underline{x})-\lambda_1 v_1 (\underline{x})=0,\\
\label{eq:macroeco:4}   -\rho v_2 (\underline{x}) + H^\uparrow  (\underline{x},y_2, \partial_x v_2) + \lambda_2 v_1 (\underline{x})-\lambda_2 v_2 (\underline{x})=0.
 \end{eqnarray}

Consider for example the prototypical case when $u(c)=\frac 1  {1-\gamma} c^{1-\gamma}$ with $\gamma>0$. Then 
\begin{equation*}
H (x,y,p)= \max_{0\le c }    \Bigl(   (y+rx-c)p + \frac 1 {1-\gamma} c^{1-\gamma}\Bigr)= (y+rx)p +\frac {\gamma}{1-\gamma } p^{1-\frac 1 \gamma} .
\end{equation*}
In Figure~\ref{fig:macroeco:0}, we plot the graphs of the functions $p\mapsto H(x,y,p)$, $p\mapsto H^{\uparrow}(x,y,p)$ and $p\mapsto H^{\downarrow}(x,y,p)$ for $\gamma=2$.
\begin{center}
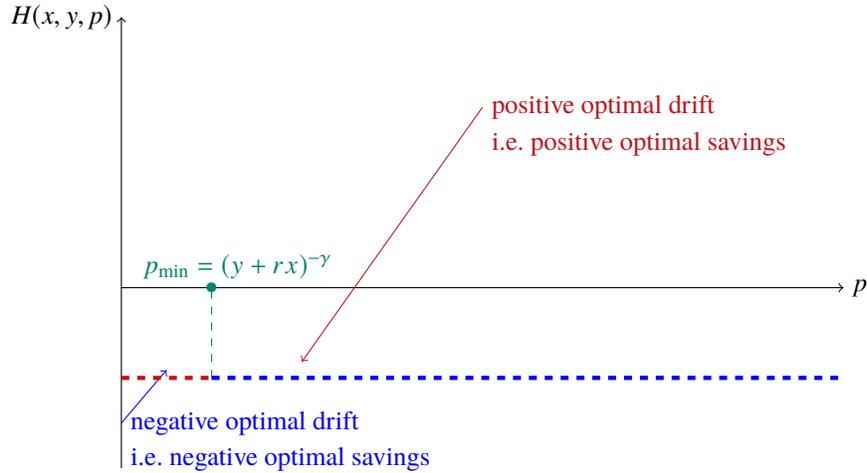

  \begin{tikzpicture}[scale=1.2]  
    \draw[->] (0,0) -- (8,0) node[right] {$p$};
    \draw[->] (0,-2) -- (0,3)    ;
    \draw[color=PineGreen, dashed] (1,-1) -- (1,0)    ;
     \draw[color=blue,<-] (0.5,-0.914)-- (0,-1.5)  node[right,color=blue] {negative optimal drift};
    \draw (0,-1.9)node[right,color=blue] {i.e. negative optimal savings};
    \draw[color=blue, ultra thick, domain=0:1] plot[id=0112+] function{    x-2*sqrt(x)} ;
    \draw[color=blue,ultra thick,dashed] (1,-1)-- (8,-1)  ;
    \draw[color=Red,<-] (2,-0.82)-- (4,2)  node[right,color=Red] {positive optimal drift};
    \draw[color=Red,ultra thick,dashed] (0,-1)-- (1,-1)  ;
    \draw (4,1.6)node[right,color=Red] {i.e. positive optimal savings};
    \draw[color=Red, ultra thick, domain=1:6] plot[id=0113+] function{    x-2*sqrt(x)} ;
    \draw(0,3)      node[left] {$ H(x,y,p)$};
    \draw[color=PineGreen] (1,0) node{$\bullet$};
    \draw(1,0)   node[above, color=PineGreen] {$\quad \quad  p_{\min}= (y+rx)^{-\gamma}$};
  \end{tikzpicture}
\captionof{figure}{The bold line is the graph of the function $p\mapsto H(x,y,p)$.  The dashed and bold  blue  lines form the graph of $p\mapsto H^{\downarrow}(x,y,p)$. The  dashed and bold red lines form the graph of $p\mapsto H^{\uparrow}(x,y,p)$}
\label{fig:macroeco:0}
\end{center}
\paragraph{\bf The ergodic measure and the coupling condition}
In Huggett model, the  interest rate $r$ is an unknown which is found from the fact that the considered economy neither creates nor destroys  wealth: the aggregate wealth 
! can be fixed to $0$. 
\\ 
 On the other hand, the latter quantity can be deduced from the ergodic measure of the process, i.e. $m_1$ and $m_2$,
the  wealth distributions of households with respective income $y_1$ and $y_2$, defined by
\begin{displaymath}
  \lim_{t\to +\infty} \EE(\phi(x_t)|y_t=y_i) =\langle m_i, \phi \rangle, \quad \forall \phi\in C_b([\underline x, +\infty)).
\end{displaymath}
 The measures  $m_1$ and $m_2$  are defined on $[\underline{x},+\infty)$ and  obviously satisfy the following property:
 \begin{equation}
   \label{eq:macroeco:5}
\int_{x\ge \underline{x}} dm_1(x)+ \int_{x\ge \underline{x}} dm_2(x)=1. 
 \end{equation}
Prescribing the aggregate  wealth amounts to writing that
\begin{equation}
  \label{eq:macroeco:6}
\int_{x\ge \underline{x}} xdm_1(x)+ \int_{x\ge \underline{x}} xdm_2(x)=0.
\end{equation}
In fact (\ref{eq:macroeco:6}) plays the role of the coupling condition in MFGs. We will discuss this point after having characterized the  measures $m_1$ and $m_2$: 
it is well known that they satisfy the Fokker-Planck equations 
 \begin{eqnarray*}
-\partial _x\left(  m_1 H_p (\cdot,y_1, \partial_x v_1) \right)   + \lambda_1     m_1-\lambda_2 m_2=0,\\
-\partial _x\left(  m_2 H_p (\cdot,y_2, \partial_x v_2) \right)   + \lambda_2     m_2-\lambda_1 m_1=0,
 \end{eqnarray*}
in the sense of distributions in  $(\underline{x},+\infty)$. 
\\
Note that even if $m_1$ and $m_2$ are regular in the open interval  $(\underline{x},+\infty)$, it may happen that the optimal strategy of a representative agent (with income $y_1$ or $y_2$) consists  of reaching the borrowing limit $x=\underline{x}$ in finite time and staying there; in such a case, the ergodic measure has a singular part supported on $\{x=\underline{x}\}$, and its absolutely continuous part (with respect to Lebesgue measure) may blow up near the boundary.
\\
For this reason, we  decompose $m_i$ as the sum of a measure  absolutely continuous with respect to Lebesgue measure on $(\underline x,+\infty)$  with density $g_i$
 and possibly of a Dirac mass at $x=\underline x$: for each Borel set $A\subset [\underline{x},+\infty)$,
  \begin{equation}
    \label{eq:macroeco:16}
 m_i(A)= \int_A g_i(x) dx + \mu_i   \delta_{\underline{x}}(A),\quad i=1,2.
  \end{equation}
Here, $g_i$ is a  measurable and nonnegative function.

 Assuming that $\partial_x v_i(x)$ have limits when $x\to \underline{x}_+$  (this can be justified rigorously for suitable choices of $u$),
 we see by definition of the ergodic measure, that for all test-functions $(\phi_1,\phi_2)\in \left(C^1_c([\underline x, +\infty))\right)^2$, 

   \begin{equation*}
    \begin{split}
  0=&  \ds
 \int_{x> \underline x}  g_1(x)  \Bigl(  \lambda_1 \phi_2(x)   -\lambda_1 \phi_1(x)          
  +   H_p(x ,y_1,\partial_x v_1(x))  \partial_x \phi_1(x)  \Bigr)dx 
  \\
 &\ds + \int_{x> \underline x}  g_2(x) \Bigl(  \lambda_2 \phi_1 (x)     -\lambda_2 \phi_2 (x)    
  +    H_p(x,y_2,\partial_x v_2(x))  \partial_x \phi_2(x)  \Bigr) dx
 \\
& \ds+
\mu_1 \Bigl(  \lambda_1 \phi_2(\underline{x}) -\lambda_1 \phi_1(\underline{x}) 
    +  H_p^\uparrow (\underline{x}, y_1 , \partial_x v_1({\underline{x}}_+))  \partial_x \phi_1 (\underline{x}) \Bigr)
\\ &\ds +
\mu_2 \Bigl( \lambda_2 \phi_1(\underline{x}) -\lambda_2 \phi_2(\underline{x}) 
    +  H_p^\uparrow (\underline{x}, y_2 , \partial_x v_2({\underline{x}}_+))  \partial_x \phi_2 (\underline{x}) \Bigr).
\end{split}
 \end{equation*}
It is also possible to use $\phi_1=1$ and $\phi_2=0$ as test-functions in the equation above. This  yields 
\begin{displaymath}
\lambda_1\int_{x\ge \underline{x}} dm_1(x)= \lambda_2\int_{x\ge \underline{x}} dm_2(x)  .
\end{displaymath}
Hence 
\begin{equation}
  \label{eq:macroeco:7}
\int_{x\ge \underline{x}} dm_1(x) =\frac {\lambda_2}{\lambda_1 +\lambda_2},\quad\quad   \int_{x\ge \underline{x}} dm_2(x)=\frac {\lambda_1}{\lambda_1 +\lambda_2}.
\end{equation}
\paragraph{\bf Summary}
To summarize, finding an equilibrium in the  Huggett model  given that the aggregate wealth is $0$ consists in looking for $(r,v_1,v_2, m_1, m_2)$ such that 
\begin{itemize}
\item the functions $v_1$ and $v_2$ are viscosity solutions to (\ref{eq:macroeco:1}),(\ref{eq:macroeco:2}),(\ref{eq:macroeco:3}),(\ref{eq:macroeco:4})
\item the measures $m_1$ and $m_2$ satisfy   (\ref{eq:macroeco:7}), 
 $m_i=g_i L+\mu_i \delta_{x=\underline x}$,
 where $L$ is the Lebesgue measure on $(\underline{x}, +\infty)$, $g_i$ is a measurable nonnegative function  $(\underline{x}, +\infty)$ and
  for all $i=1,2$, for all test function $\phi$:
  \begin{equation}
    \label{eq:macroeco:15}
 \left. \begin{array}[c]{r}
    \ds 
   \ds 
 \int_{x> \underline x}  \left( \lambda_j  g_j(x)-\lambda_i  g_i(x) \right) \phi(x) dx 
  +  \int_{x> \underline x}  H_p(x ,y_i,\partial_x v_i(x))  \partial_x \phi(x)  dx 
\\
 + \ds
\mu_i\Bigl(  -\lambda_i \phi(\underline{x})     +  H_p^\uparrow (\underline{x}, y_i , \partial_x v_i(\underline{x}_+))  \partial_x \phi (\underline{x}) \Bigr)
+\mu_j  \lambda_j \phi(\underline{x})
  \end{array}
\right\}=0,
  \end{equation}
where $j=2$ if $i=1$ and $j=1$ if $i=2$.
\item the aggregate wealth is fixed, i.e. (\ref{eq:macroeco:6}).  By contrast with the previously considered MFG models, the coupling in the Bellman equation does not come from a coupling cost, but from the implicit relation (\ref{eq:macroeco:6}) which can be seen as an equation for $r$.  Since the latter coupling is only through the constant value $r$, it can be said to be weak.
\end{itemize}
\paragraph{\bf Theoretical results}
The following theorem, proved in~\cite{achdou2017income}, gives quite accurate information on  the equilibria for which $r<\rho$:
\begin{theorem}
  \label{sec:bf-summary}
Let $c_i^*(x)$ be the optimal consumption of an agent with  income $y_i$ and wealth $x$. 
\begin{enumerate}
\item If $r<\rho$,  then the optimal drift (whose economical interpretation is the optimal saving policy) corresponding to income $y_1$, namely $rx+y_1-c_1^*(x)$, is negative for all $x>\underline{x}$.
\item If $r<\rho$ and  $-\frac {u''(y_1+r\underline x)}{u'(y_1+r\underline x)}<+\infty$, then there exists a positive number $\nu_1$ (that can be written explicitly) such that  for $x$ close to $\underline{x}$, 
\begin{eqnarray*}
  rx+y_1-c_1^*(x) \sim -\sqrt{2\nu_1} \sqrt{x-\underline x}<0,
\end{eqnarray*}
so the  agents for which the income stays at the lower value $y_1$ hit the borrowing limit in finite time, and
\begin{displaymath}
 \mu_1>0, \qquad
 g_1(x)\sim \frac {\gamma_1 }{\sqrt{x-\underline{x}}} ,
\end{displaymath}
for some positive value $\gamma_1$ (that can be written explicitly).
\item If  $r<\rho$ and $\sup_{c} \left(-\frac {cu''(c)}{u'(c)}\right)<+\infty$, then  there exists $\underline x \le \overline{x}<+\infty$ such that
\begin{eqnarray*}
  rx+y_2-c_2^*(x)<0, &\quad & \hbox{ for all } x>\overline{x},\\
rx+y_2-c_2^*(x)>0, &\quad & \hbox{ for all } \underline x < x<\overline{x}.
\end{eqnarray*}
Moreover $\mu_2=0$ and if  $\overline {x}>\underline{x}$, then for some positive constant $\zeta_2$,
 $rx+y_2-c_2^*(x) \sim \zeta_2 (\overline{x}-x)$ for $x$ close to $\overline{x}$.
\end{enumerate}
\end{theorem}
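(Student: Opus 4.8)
The plan is to reduce all three assertions to a single scalar identity for the optimal saving rate, and then to run a boundary-layer analysis at $\underline{x}$. I first record the regularity that the whole argument rests on: each $v_i$ is concave (strictly so in the interior), of class $C^2$ on $(\underline{x},+\infty)$, with $v_i'>0$ and one-sided limits $v_i'(\underline{x}_+)$, and the two are ordered as $v_1'\ge v_2'>0$ — the marginal value of wealth is larger in the low-income state; the ordering follows from a comparison argument on the coupled system (\ref{eq:macroeco:1})--(\ref{eq:macroeco:2}) and the interior regularity from the strict concavity of $u$. Writing $c_i^*(x)=(u')^{-1}(v_i'(x))$ and letting $s_i(x)=rx+y_i-c_i^*(x)$ be the optimal drift, the envelope relations $H_x(x,y,p)=rp$ and $H_p(x,y_i,v_i'(x))=s_i(x)$ (so $s_i$ is also the coefficient appearing in the Fokker--Planck equations) give, upon differentiating the Bellman equation in $x$,
\[ s_i(x)\,v_i''(x)=\Psi_i(x)\quad\text{on }(\underline{x},+\infty),\qquad i=1,2, \]
where $\Psi_1(x)=(\rho-r+\lambda_1)v_1'(x)-\lambda_1 v_2'(x)$ and $\Psi_2(x)=(\rho-r+\lambda_2)v_2'(x)-\lambda_2 v_1'(x)$.

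Assertion (1) is then immediate: $\Psi_1=(\rho-r)v_1'+\lambda_1(v_1'-v_2')\ge(\rho-r)v_1'>0$ when $r<\rho$, so $s_1 v_1''=\Psi_1>0$ together with $v_1''\le 0$ forces $v_1''<0$ and $s_1<0$ throughout $(\underline{x},+\infty)$. For (2) I would first show $s_1(\underline{x}_+)=0$ (a strictly negative drift at $\underline{x}$ is incompatible with the $H^{\uparrow}$ boundary condition (\ref{eq:macroeco:3}) encoding the state constraint), whence $c_1^*(\underline{x}_+)=y_1+r\underline{x}=:c_0$ and $v_1'(\underline{x}_+)=u'(c_0)$; this is the only place the hypothesis $-u''(y_1+r\underline{x})/u'(y_1+r\underline{x})<+\infty$ enters, since it amounts to $c_0>0$ with $u''(c_0)$ finite. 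Differentiating $s_1=rx+y_1-(u')^{-1}(v_1')$ and inserting $v_1''=\Psi_1/s_1$ yields the scalar ODE
\[ s_1(x)\,s_1'(x)=r\,s_1(x)-\frac{\Psi_1(x)}{u''(c_1^*(x))}, \]
whose right-hand side tends to $\nu_1:=-\Psi_1(\underline{x})/u''(c_0)>0$ as $x\downarrow\underline{x}$. Hence $\tfrac{d}{dx}\bigl(\tfrac{1}{2}s_1^2\bigr)\to\nu_1$, and since $s_1(\underline{x}_+)=0$ a mean-value argument gives $s_1(x)^2\sim 2\nu_1(x-\underline{x})$, i.e. $s_1(x)\sim-\sqrt{2\nu_1}\sqrt{x-\underline{x}}$; because $\int_{\underline{x}}\!dx/\sqrt{x-\underline{x}}<+\infty$, the flow $\dot x=s_1(x)$ reaches $\underline{x}$ in finite time.

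For the measure in (2), near $\underline{x}$ the stationary Fokker--Planck equation for the density part is $-\partial_x(g_1 s_1)+\lambda_1 g_1-\lambda_2 g_2=0$, and balancing mass at the atom gives $\lambda_1\mu_1=-\lim_{x\downarrow\underline{x}}g_1(x)s_1(x)$; finiteness and positivity of this inward flux force $g_1 s_1\to-\lambda_1\mu_1$ with $\mu_1>0$, which combined with $s_1\sim-\sqrt{2\nu_1(x-\underline{x})}$ yields $g_1(x)\sim\gamma_1/\sqrt{x-\underline{x}}$ with $\gamma_1=\lambda_1\mu_1/\sqrt{2\nu_1}$. For (3) I would run the same identity with $i=2$: now $\Psi_2$ has no fixed sign, but near $\underline{x}$ the constraint is slack for the high-income state (the $y_2$-agent accumulates rather than decumulates), so $\Psi_2\le 0$, $s_2>0$, and hence $\mu_2=0$; while the hypothesis $\sup_c(-cu''(c)/u'(c))<+\infty$ forces consumption to exceed income once wealth is large, so $\Psi_2>0$ and $s_2<0$ for $x$ large. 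An intermediate-value / monotonicity argument on $\Psi_2$ then produces a unique $\overline{x}\in[\underline{x},+\infty)$ with $\Psi_2(\overline{x})=0$, equivalently $s_2(\overline{x})=0$, $s_2>0$ on $(\underline{x},\overline{x})$, $s_2<0$ on $(\overline{x},+\infty)$; since $\overline{x}$ is interior with $v_2''(\overline{x})\in(-\infty,0)$, $s_2$ vanishes linearly there, $s_2(x)\sim\zeta_2(\overline{x}-x)$ with $\zeta_2=-s_2'(\overline{x})>0$.

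The hard part is not these computations but two supporting results. First, the regularity and ordering package for $v_1,v_2$ under a state constraint with Poisson coupling — one-sided $C^1$ regularity at $\underline{x}$, strict interior concavity, and $v_1'\ge v_2'>0$ — is what licenses the identity $s_i v_i''=\Psi_i$, and establishing it rigorously needs a careful viscosity-solution analysis rather than a citation. Second, upgrading the boundary-layer heuristics to theorems is delicate: proving the singular ODE for $s_1$ genuinely exhibits the $\sqrt{x-\underline{x}}$ rate (a Grönwall / L'Hôpital estimate near an endpoint where $v_1''\to-\infty$), and that the Fokker--Planck equation forces the matching $1/\sqrt{x-\underline{x}}$ blow-up of $g_1$ together with $\mu_1>0$. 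Showing $\overline{x}<+\infty$ in (3) is the one genuinely global step, and it is exactly there that the bounded-relative-risk-aversion hypothesis is indispensable.
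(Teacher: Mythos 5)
The paper itself does not prove this theorem; it is cited from \cite{achdou2017income}, so I am assessing your proposal against the approach taken there, which is indeed the one you describe: differentiate the Bellman system in $x$ using the envelope theorem to get the scalar identity $s_i(x)\,v_i''(x)=\Psi_i(x)$, then run a boundary-layer analysis at $\underline{x}$. Your computations are correct: $H_x=rp$, $H_p=s_i$, hence $s_i v_i''=\Psi_i$ with $\Psi_1=(\rho-r+\lambda_1)v_1'-\lambda_1 v_2'$, and the ODE $s_1 s_1'=rs_1-\Psi_1/u''(c_1^*)$ with $s_1(\underline{x}_+)=0$ does give $s_1(x)^2\sim 2\nu_1(x-\underline{x})$ with $\nu_1=-\Psi_1(\underline{x})/u''(y_1+r\underline{x})>0$, and the flux balance at the atom gives $\gamma_1=\lambda_1\mu_1/\sqrt{2\nu_1}$. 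The regularity package ($C^2$ interior, $v_1'\ge v_2'>0$, one-sided limits at $\underline{x}$) that you flag as the real work is exactly the content of the supporting lemmas in the reference.

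The one genuine gap not merely of the ``this needs to be made rigorous'' type is in part (3). Your argument for $\mu_2=0$ reads: near $\underline{x}$ the $y_2$-agent accumulates, so $s_2>0$, hence no atom. But this is precisely the assertion $\overline{x}>\underline{x}$, which the statement does \emph{not} guarantee ($\overline{x}=\underline{x}$ is explicitly allowed), and you give no reason it should hold. The sign of $\Psi_2(\underline{x})=(\rho-r+\lambda_2)v_2'(\underline{x})-\lambda_2 v_1'(\underline{x})$ is genuinely ambiguous: the first term is positive by $\rho>r$, the second is negative because $v_1'>v_2'$, and their relative size depends on the data. If $\Psi_2(\underline{x})>0$, the same boundary-layer mechanism as in part (2) applies to state $2$ and would produce an atom, so either one must rule out $\Psi_2(\underline{x})>0$ under the stated hypotheses, or one must give a different argument for $\mu_2=0$ that covers $\overline{x}=\underline{x}$. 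Your proposal does neither; this is the place where the proof is not just incomplete but where a reader cannot see why the claim is true. A secondary, smaller issue in the same part: the linear vanishing $s_2(x)\sim\zeta_2(\overline{x}-x)$ rests on the assertion $v_2''(\overline{x})\in(-\infty,0)$, which is a removable-singularity claim for the $0/0$ quotient $\Psi_2/s_2$ at an interior zero and deserves its own justification (interior elliptic-type regularity, or a L'H\^opital argument paired with $\Psi_2'(\overline{x})\ne 0$), rather than a bare statement. Everything else you list as ``hard parts'' is correctly identified and is indeed where the technical labor lies.
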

From Theorem~\ref{sec:bf-summary}, we see in particular that agents whose  income remains at the lower value 
 are drifted toward the borrowing limit and get stuck there, while agents whose income remains at the higher value get richer if their wealth is small enough.
 In other words, the only way for an agent in the low income state to avoid being drifted to  the borrowing limit is to draw the high income  state.
Reference~\cite{achdou2017income}  also contains existence and uniqueness results.
 \subsection{A finite difference method for the Huggett model}\label{sec:finite-diff-meth}
 We wish to simulate the Huggett model in  the interval $[\underline x,  \tilde x]$.
Consider a uniform grid on $[\underline x,  \tilde x]$ with step $h=  \frac {\tilde x-\underline x}{N_h}  $:  we set $x_i=\underline x+ ih$, $i=0,\dots, N_h$.
The discrete approximation of $v_j(x_i)$ is named $v_{i,j}$.
The discrete version of the Hamiltonian $H$ involves the  function $\cH:  \R^4\to \R$,
\begin{displaymath}
  \cH(x,y,\xi_r,\xi_{\ell})= H^{\uparrow}(x,y,\xi_{r})+ H^{\downarrow}(x,y,\xi_\ell)- \min_{p\in \R} H(x,y,p).
\end{displaymath}
We do not discuss the boundary conditions at $x=\tilde{x}$ in order to focus on the difficulties coming from the state constraint $x_t\ge \underline x$. In fact, if $\tilde x$ is large enough, state constraint boundary conditions can also be chosen at $x=\tilde x$. 
The numerical monotone scheme for (\ref{eq:macroeco:1}),(\ref{eq:macroeco:2}),(\ref{eq:macroeco:3}),(\ref{eq:macroeco:4}) is 
\begin{eqnarray}
  \label{eq:macroeco:8}
  -\rho v_{i,j}+ \cH\left(x_i,y_j,\frac { v_{i+1,j}-v_{i,j} } h,\frac { v_{i,j}-v_{i-1,j} } h\right) +\lambda_i \left( v_{i,k}- v_{i,j}  \right)&=&0,\quad   \quad \hbox{for } 0<i<N_h,
\\\label{eq:macroeco:9}
  -\rho v_{0,j}+ H^\uparrow \left( \underline x,y_j,\frac { v_{1,j}-v_{0,j} } h \right) +\lambda_i \left( v_{0,k}- v_{0,j}  \right)&=&0,\quad   \quad \hbox{for } i=0,
\end{eqnarray}
where $k=2$ if $j=1$ and $k=1$ if $j=2$. In order to find the discrete version of the Fokker-Planck equation, we start by writing (\ref{eq:macroeco:8})-(\ref{eq:macroeco:9}) in the following compact form:
\begin{equation}\label{eq:macroeco:10}
-\rho V +\cF(V)=0  ,
\end{equation}
with self-explanatory notations.  The differential of  $\cF$ at $V$  maps $W$ to the grid functions whose $(i,j)$-th components is:
\begin{itemize}
\item if $ 0<i<N_h$:
\begin{equation}
\label{eq:macroeco:11}
   H_p^\uparrow \left( x_i,y_j,\frac { v_{i+1,j}-v_{i,j} } h \right)\frac { w_{i+1,j}-w_{i,j} } h
+  H_p^\downarrow \left( x_i,y_j,\frac { v_{i,j}-v_{i-1,j} } h \right)\frac { w_{i,j}-w_{i-1,j} } h
 +\lambda_i \left( w_{i,k}- w_{i,j}  \right),  
\end{equation}
\item If $i=0$:
\begin{equation}
\label{eq:macroeco:12}
   H_p^\uparrow \left( \underline x,y_j,\frac { v_{1,j}-v_{0,j} } h \right)\frac { w_{1,j}-w_{0,j} } h
 +\lambda_i \left( w_{0,k}- w_{0,j}  \right).
\end{equation}
\end{itemize}
We get the discrete Fokker-Planck equation by requiring that $  \langle D\cF(V)W, M\rangle =0$ for all $W$,
 where  $M=(m_{i,j})_{0\le i\le N_h, j=1,2}$; more explicitly, we obtain:
\begin{equation}\label{eq:macroeco:13}
0=\left\{
    \begin{array}[c]{l}
\lambda_k  m_{i,k}-\lambda_j  m_{i,j} 
 \\ 
\ds +  \frac 1 {h} \left(  m_{i,j} H^{\downarrow}_p\left(x_{i},y_j, \frac { v_{i,j}- v_{i-1,j}}{h}\right)
-m_{i+1,j} H_p^{\downarrow}\left(x_{i+1}, y_j, \frac { v_{i+1,j}- v_{i,j}}{h}\right)  \right)   \\
\\ \ds
- \frac 1 {h} \left(  m_{i,j} H_p^{\uparrow}\left(x_{i}, y_j, \frac { v_{i+1,j}- v_{i,j}}{h}\right)
- m_{i-1,j} H_p^{\uparrow}\left(x_{i-1},y_j, \frac { v_{i,j}- v_{i-1,j}}{h}\right)\right)   ,
    \end{array}
\right.
\end{equation}
for $0<i<N_h$, and 
 \begin{equation}\label{eq:macroeco:14}
 0=
 \ds  \lambda_k  m_{0,k}-\lambda_j  m_{0,j}
-\frac  1 h \left(   m_{0,j} H_p^{\uparrow}\left(\underline x, y_j, \frac { v_{1,j}- v_{0,j}}{h}\right)  +
m_{1,j} H_p^{\downarrow}\left(x_{1}, y_j, \frac { v_{1,j}- v_{0,j}}{h}\right) \right).
 \end{equation}
Assuming for simplicity that in (\ref{eq:macroeco:16}), $(g_j)_{j=1,2}$ are absolutely continuous with respect to the Lebesgue measure, 
equations  (\ref{eq:macroeco:13}) and (\ref{eq:macroeco:14}) provide a consistent discrete scheme for (\ref{eq:macroeco:15}) if $h m_{0,j}$ is seen as the discrete version of $\mu_j$
and  if $ m_{i,j}$ is the discrete version of $g_{j}(x_i)$. If we do not suppose that $g_j$ is absolutely continuous with respect to the Lebesgue measure,
then $m_{i,j}$ may be seen as a discrete version of $\frac 1 h \int_{x_i -h /2 } ^{x_i +h/2 } dg_j(x)$.  \\
The consistency of (\ref{eq:macroeco:13}) for $i>0$ is obtained as usual. Let us focus on (\ref{eq:macroeco:14}):
 assume that  the following convergence holds:
   \begin{displaymath}
  \lim_{h\to 0} \max_{i,j} | v_{i,j}- v(x_{i},y_j)|=0 \quad \hbox{and}\quad    \lim_{h\to 0} \max_{j}  \left| \frac { v_{1,j}- v_{0,j}}{h} - \partial_x v_j (\underline x)\right|=0.
   \end{displaymath}
Assume  that $H_p(\underline x, \partial_x v_j(\underline x))>\varepsilon$  for a fixed positive number $\varepsilon>0$. Then for $h$ small enough, 
 \begin{displaymath}
  H_p^{\uparrow}\left( x_{i},y_j, \frac { v_{1,j}- v_{0,j}}{h}\right) >0\quad  \hbox{ and}\quad 
   H_p^{\downarrow}\left( x_{i},y_j,\frac { v_{1,j}- v_{0,j}}{h}\right) =0,   
 \end{displaymath}
for $i=0,1$. Plugging this information in (\ref{eq:macroeco:14}) yields that $m_{0,j}$ is of the same order as $hm_{0,k}$, in agreement with the fact that since the optimal drift is positive, there is no Dirac mass at $x=\underline x$.  In the opposite case, we see that (\ref{eq:macroeco:14}) is consistent with the asymptotic expansions found in Theorem~\ref{sec:bf-summary}.

\paragraph{\bf Numerical simulations}
On Figure \ref{fig:macroeco:1}, we plot the densities $g_1$ and $g_2$ and the cumulative distributions $\int_{\underline{x}}^{x} dm_1$ and $\int_{\underline{x}}^{x} dm_2$, computed by  two methods:
the first one is the finite difference descrived above. The second one consists of coupling the finite difference scheme described above for the HJB equation and a closed formula for the Fokker-Planck equation. Two different grids are used with $N_h=500$ and $N_h=30$.   We see that for $N_h=500$, it is not possible to distinguish the graphs
obtained by the two methods. The density $g_1$ blows up at $x=\underline{x}$ while $g_2$ remains bounded. From the graphs of the cumulative distributions, we also see that $\mu_1>0$ while $\mu_2=0$.

\begin{center}
  \includegraphics[width=16cm]{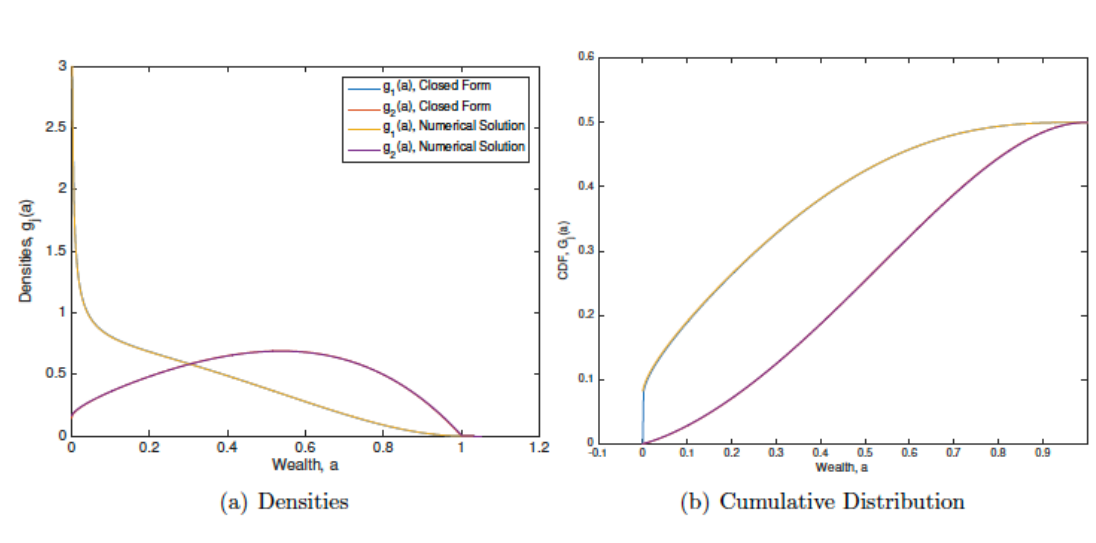}
\includegraphics[width=16cm]{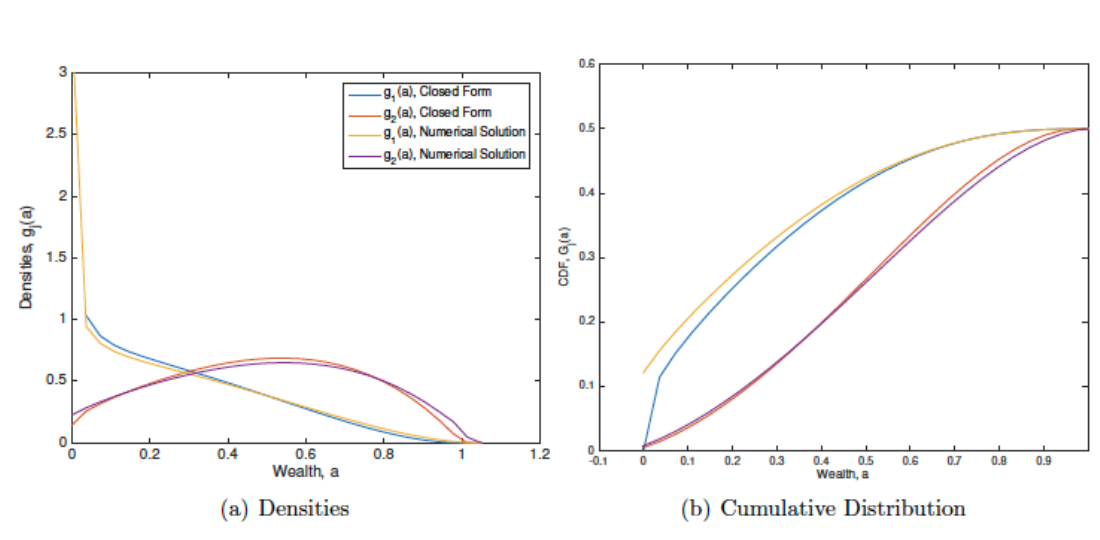} 
  \captionof{figure}{Huggett model: Left: the measures $g_i$  obtained by an explicit formula and by the finite difference method for two grid resolutions with $500$ nodes (Top) and 
$30$ nodes (Bottom). Right: the cumulative distributions  $\int_{\underline{x}}^{x} dm_i$.  }
  \label{fig:macroeco:1}
\end{center}

\subsection{The model of Aiyagari}
\label{sec:model-aiyagari}
There is a continuum of agents which are heterogeneous in  wealth $x$ and productivity $y$. The dynamics of the wealth of a  household is given by 
\begin{displaymath}     
 dx_t = (w y_t+r x_t -c_t ) dt,
    \end{displaymath}
where $r$ is the interest rate, $w$ is the level of wages, and $c_t$ is the consumption (the control variable). 
The dynamics of the productivity $y_t$ is given by the stochastic differential equation in $\R_+$:
\begin{displaymath}
   dy_t = \mu(y_t) dt +\sigma(y_t) dW_t,
\end{displaymath}
where $W_t$ is a standard one dimensional Brownian motion. Note that this models situations in which the noises in the productivity of the agents are all independent  (idiosyncratic noise). Common noise will be briefly discussed below.

 As for 
the Huggett model, there  is  a borrowing constraint $x_t\ge \underline{x}$, 
a  household tries to maximize the utility $     \mathbb{E}\int_0^\infty e^{-\rho t} u(c_t)dt$.  
To determine the interest rate $r$ and the level of wages $w$, Aiyagari considers that the production of the economy is given by the following Cobb-Douglas law:
\[  F(X,Y)=  A X^\alpha Y^{1-\alpha}  ,\]
for some $\alpha\in (0,1)$,
where, if $m(\cdot,\cdot)$ is the ergodic measure, 
\begin{itemize}
  \item $A$ is a  productivity factor
  \item $X= \int_{x\ge \underline  x} \int_{y\in \R_+}  x dm (x,y) $ is the aggregate capital
  \item $Y=   \int_{x\ge \underline  x} \int_{y\in \R_+}  y dm (x,y)$ is  the  aggregate  labor.
  \end{itemize}
 The level of wages $w$ and interest rate $r$ are obtained by  the equilibrium relation
  \begin{displaymath}
    (X,Y)= {\rm{argmax}}    \Bigl( F(X,Y)- (r+\delta ) X -w Y \Bigr),
  \end{displaymath}
  where $\delta$ is the rate of depreciation of the capital. This implies that 
  \begin{displaymath}
  r=\partial _X F (X,Y)-\delta, \quad\quad w = \partial _Y F (X,Y).
\end{displaymath}
\begin{remark}\label{sec:model-aiyagari-1}
Tackling a  common noise 
 is possible in the non-stationary  version of the model, by letting the productivity factor become a random process $A_t$ (for example $A_t$  may be a piecewise constant process with random shocks): this leads to a more complex setting  which is the continuous-time version of the famous Krusell-Smith model, see~\cite{krusell-smith}.
Numerical simulations of Krusell-Smith models have been carried out  by the authors of~\cite{achdou2017income},
 but,  in order to keep the survey reasonnably short, we will not discuss this topic. 
\end{remark}
The Hamiltonian of the problem is $  H( x, y , p)=\max_c \Bigl( u(c) + p ( w y + r x -c) \Bigr)$ and the mean field equilibrium is found by solving the system of partial differential  equations:
\begin{eqnarray}
  \label{eq:macroeco:17}    0=\ds   \frac{\sigma^2(y)}{2}\partial_{yy}v +\mu(y)  \partial_y v  +     H(x,y,\partial_x v )   - \rho v   ,\\
\label{eq:macroeco:18}    0= \ds  - \frac{1}{2} \partial_{yy} \left(\sigma^2(y)m\right)+\partial_y \Bigl(\mu(y)m\Bigr) + \partial_x \Bigl(m H_p(x,y,\partial_x v )  \Bigr) ,
\end{eqnarray}
in $(\underline{x},+\infty)\times \R_+$
with suitable boundary conditions on the line $\{x=\underline x\}$ linked to the state constraints, $ \int_{x\ge \underline  x} \int_{y\in \R_+}  dm(x,y)=1$,
and the  equilibrium condition
\begin{equation}
\label{eq:macroeco:19}
  \begin{array}[c]{ll}
    \ds r=\partial _X F (X,Y)-\delta,\quad\quad &\ds w= \partial _Y F (X,Y),\\
\ds    X=  \ds \int_{x\ge \underline  x} \int_{y\in \R_+}  x dm (x,y) & Y=  \ds\int_{x\ge \underline  x} \int_{y\in \R_+} y dm (x,y).
  \end{array}
\end{equation}
 The boundary condition for the value function can be written
 \begin{equation}
    \label{eq:macroeco:20}   0=\ds  \frac{\sigma^2(y)}{2}\partial_{yy}v +\mu(y)  \partial_y v  +     H^{\uparrow}(\underline 
x,y,\partial_x v )  - \rho v  ,
 \end{equation}
where $p\mapsto H^{\uparrow}(x,y,p)$ is the non-decreasing envelope of $p \mapsto H(x,y,p)$. \\
We are going to look for $m$ as the sum of a measure  which is absolutely continuous with respect to the two dimensional Lebesgue measure
 on $(\underline x,+\infty)\times \R_+$ with density $g$  and of a measure  $\eta$ supported in the line $\{x=\underline x\}$:
 \begin{equation}
   \label{eq:macroeco:22}
    dm(x,y) =  g(x,y)  dxdy +  d\eta( y) ,
 \end{equation}
and for all test function $\phi$:
  \begin{equation}
\label{eq:macroeco:21}
 \left. \begin{array}[c]{r}
    \ds 
\int_{x> \underline  x} \int_{y\in \R_+} g(x,y) \left( \frac{\sigma^2(y)}{2} \partial_{yy} \phi (x,y)  + H_p(x ,y,\partial_x v (x,y))  \partial_x \phi(x,y) +\mu(y)  \partial_y \phi(x,y)    \right)dxdy 
\\
 + \ds
  \int_{y\in \R_+}  \left( \frac{\sigma^2(y)}{2} \partial_{yy} \phi (\underline{x},y)  + H_p^\uparrow (\underline{x} ,y,\partial_x v(\underline{x}_+,y))  \partial_x \phi (\underline{x},y)+\mu(y)  \partial_y \phi  (\underline{x},y)  \right) d\eta(y)
  \end{array}
\right\}=0.
  \end{equation}
Note that it is not possible to find a partial differential equation  for $\eta$ on the line $x=\underline{x}$, nor a local boundary condition for $g$, because it is not possible to express the term 
$\ds  \int_{y\in \R_+} H_p^\uparrow (\underline{x} ,y,\partial_x v(\underline{x}_+,y))  \partial_x \phi (\underline{x},y) d\eta(\underline x,y)$
 as a distribution acting on $\phi(\underline x,\cdot)$.
\\
The finite difference scheme for (\ref{eq:macroeco:17}),(\ref{eq:macroeco:20}),(\ref{eq:macroeco:22}),(\ref{eq:macroeco:21}),(\ref{eq:macroeco:19}) is found exactly in the same spirit as for Huggett model and we omit the details.
In Figure \ref{fig:macroeco:3}, we display the optimal saving policy $(x,y)\mapsto wy+rx-c^*(x,y)$ and the ergodic measure obtained by the above-mentioned finite difference scheme for Aiyagari model with $u(c)=-c^{-1}$. We see that the ergodic measure $m$ (right part of Figure \ref{fig:macroeco:3}) has a singularity  on the line $x=\underline{x}$ for small values of the productivity $y$, and for the same values of
$y$, the density of the absolutely continuous part of $m$ with respect to the two-dimensional Lebesgue measure blows up when $x\to \underline x$. The economic interpretation is thatthe agents with low productivity are drifted to the borrwing limit.   The singular part of the ergodic measure is of the same nature as the Dirac mass that was obtained for $y=y_1$ in the Huggett model.  It corresponds to 
the zone where the optimal drift is negative near  the  borrowing limit (see the left part of Figure \ref{fig:macroeco:3}).
\begin{center}
	\includegraphics[width=0.49\textwidth]{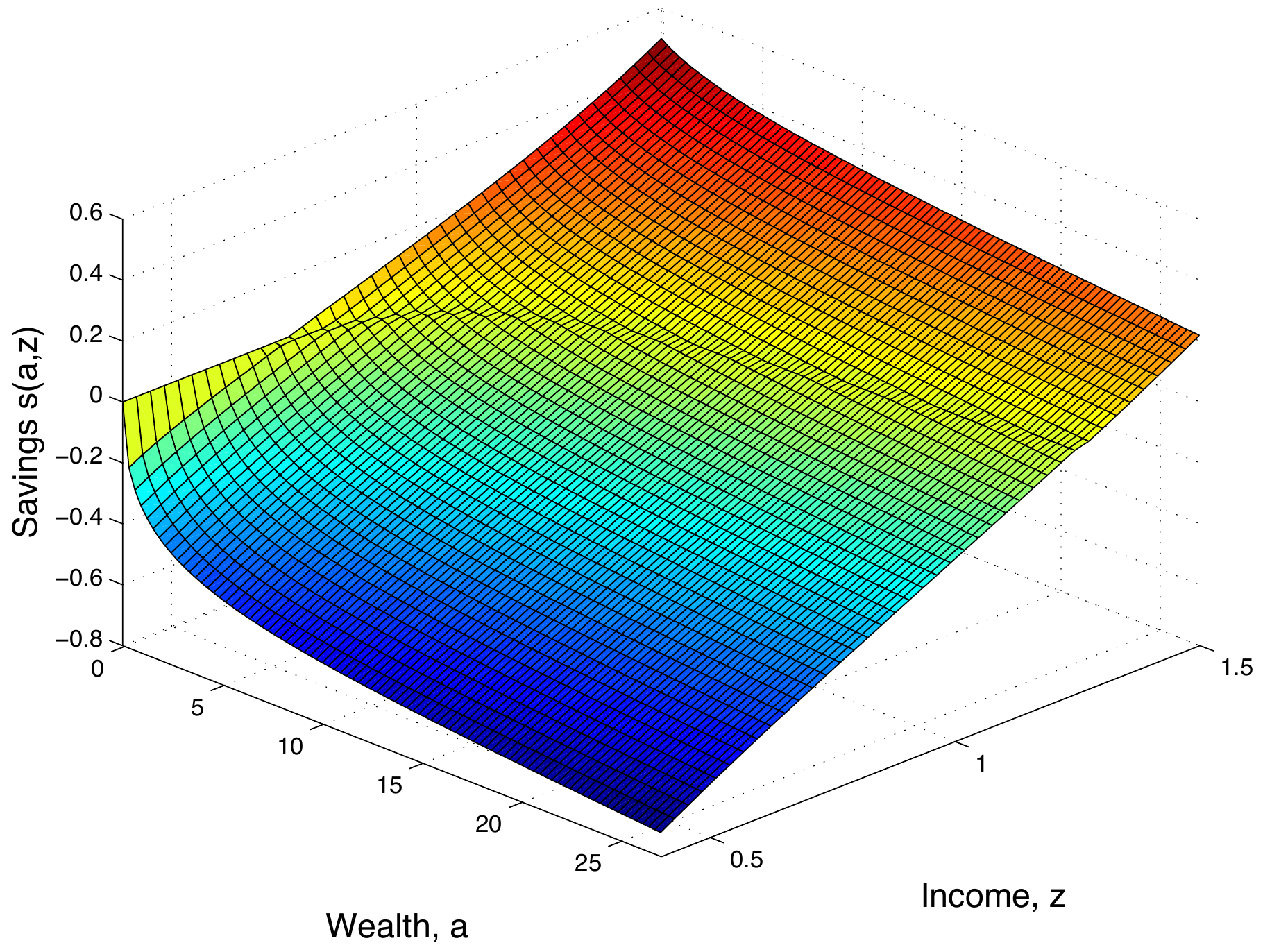}
	\includegraphics[width=0.49\textwidth]{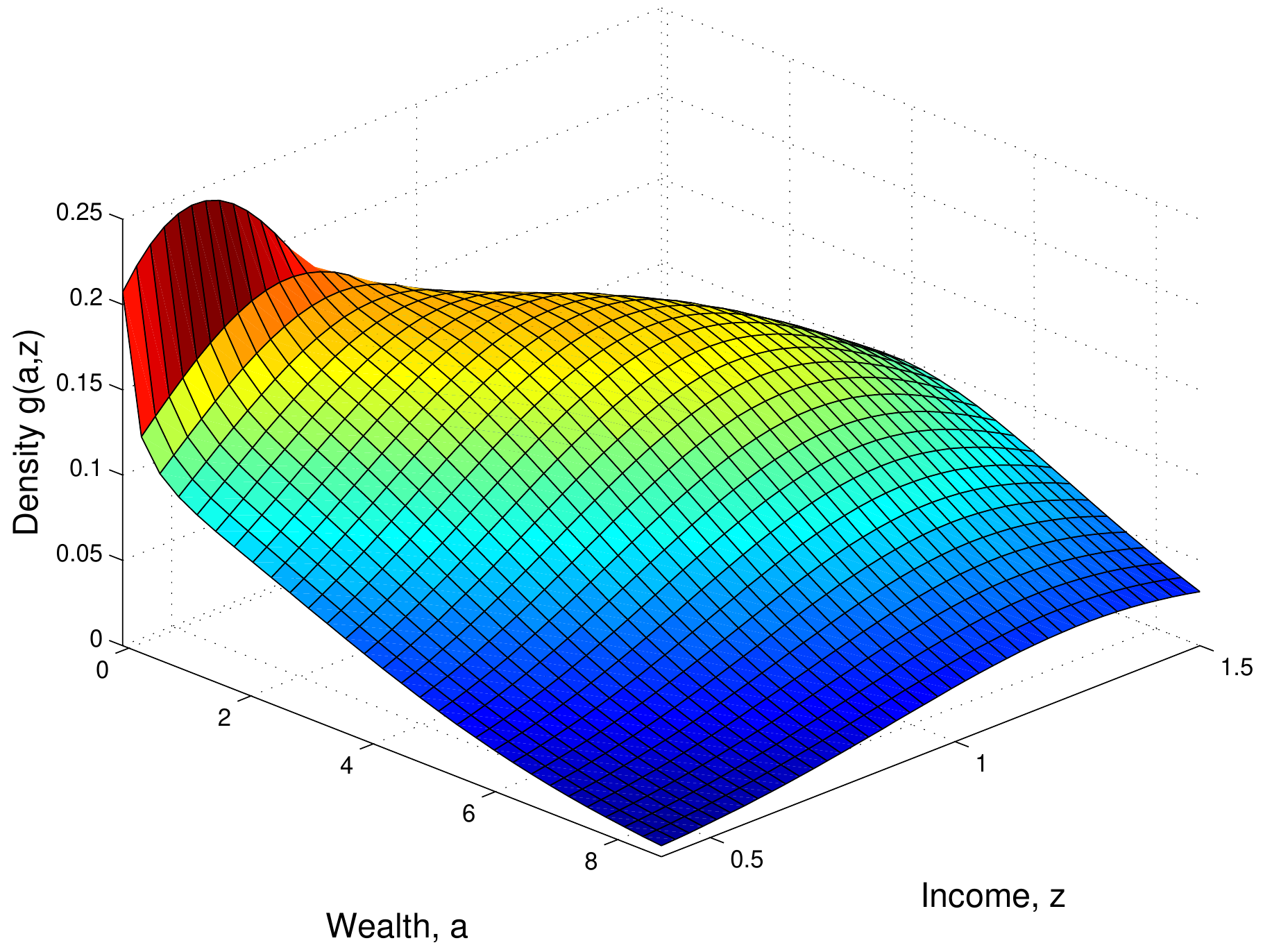}
\captionof{figure}{Numerical simulations of Aiyagari model with $u(c)=-c^{-1}$. Left: the optimal drift (optimal saving policy) $wy+rx-c^*(x,y)$. Right: the part of the ergodic measure which is absolutely continuous  with respect to Lebesgue measure.}
\label{fig:macroeco:3}
\end{center}

\section{Conclusion}

In this survey, we have put stress on finite difference schemes for the systems of forward-backward PDEs that may stem from the theory of mean field games,
and have discussed in particular some variational aspects and  numerical algorithms that may be used for solving the related systems of nonlinear equations.
We have also addressed in details two applications of MFGs to crowd motion and macroeconomics,  and a comparison of MFGs with mean field control: we hope that these  examples 
show well on the one hand, that the theory of MFGs is quite relevant for modeling the behavior of a large number of rational agents,  and 
 on the other hand, that several difficulties must be addressed in order to tackle realistic problems.

To keep  the survey short, we have not addressed the following interesting aspects:

\begin{itemize}
\item  Semi-Lagrangian schemes for the system of forward-backward PDEs.   While semi-Lagrangian schemes for optimal control problems have been extensively studied, 
much less has been done regarding the Fokker-Planck equation. In the context of MFGs,  semi-Lagrangian schemes have been investigated by F.~Camilli and F.~Silva,  E.~Carlini and F.~Silva,
see the references~\cite{MR2928379,MR3148086,MR3392626,MR3828859}.  Arguably, the advantage of such methods is their direct connection to the underlying optimal control problems, and a possible drawback may be the difficulty to address realistic boundary conditions.
\item An efficient algorithm for ergodic MFGs has been proposed by in~\cite{MR3601001,MR3882530}. The approach relies on a finite difference scheme and a least squares formulation, which is then solved using Gauss-Newton iterations. 
\item D. Gomes and his coauthors have proposed gradient flow methods for solving deterministic mean field games in infinite  horizon, see ~\cite{MR3698446,GomesSaude2018}, 
under a monotonicity assumption. Their main idea is that the solution to the system of PDEs can be recast as a zero of a monotone operator, an can thus be found by following the related gradient flow. They focus on the following example:
\begin{subequations}\label{eq:PDE-system-MFG-ergodic}
     \begin{empheq}[left=\empheqlbrace]{align}
     	\displaystyle
	& \lambda  +H_0(\cdot, \grad u) - \log(m) = 0, 
	\qquad \hbox{ in }  \TT,
\\
	\displaystyle
	& - \div\left( m \partial_pH_0 (\cdot, m, \grad u) \right) = 0, 
	\qquad \hbox{ in } \TT,
	\\
	& \int_{\TT} m = 1, \qquad \int_{\TT} u = 0, \qquad m>0 \hbox{ in } \TT,
     \end{empheq}
\end{subequations}
where the ergodic constant $\lambda$ is an unknown.
They consider the following monotone map :
\begin{displaymath}
  	A \begin{pmatrix}
	u
	\\
	m
	\end{pmatrix} 
	= 
	\begin{pmatrix}
	-\div\left( m(\cdot) \partial_p H_0 (\cdot, m(\cdot), \grad u(\cdot)) \right)
\\
	 - H_0(\cdot, \grad u) + \log(m)
	\end{pmatrix}.
      \end{displaymath}     
Thanks to the special choice of the coupling cost $\log(m)$, a  gradient flow method applied to $A$, i.e. 
\begin{displaymath}
  	\frac{d}{d \tau}
	\begin{pmatrix}
	u_\tau
	\\
	m_\tau
	\end{pmatrix} 
	=
	- A \begin{pmatrix}
	u_\tau
	\\
	m_\tau
	\end{pmatrix} - \begin{pmatrix}
	0
	\\
	\lambda_\tau
	\end{pmatrix} ,
\end{displaymath}
preserves the positivity of $m$ (this may not be true with other coupling costs, in which case additional projections may be needed). The real number $\lambda_{\tau}$ is used to enforce
the constraint $\int_\TT m_\tau=1$.  
\\
This idea has been studied on the aforementioned example and some variants but needs to be tested in the stochastic case (i.e., second order MFGs) and with general boundary conditions. The generalization to finite horizon is not obvious.
\item Mean field games related to  impulse control and optimal exit time  have been studied by C.~Bertucci, both from a theoretical and a numerical viewpoint, 
see~\cite{bertucci2018remarkUzawa}. In particular for MFGs related to  impulse control problems,  there remains a lot of difficult open issues.
\item High dimensional problems.  Finite difference schemes can only be used if the dimension $d$ of the state space is not too large, say $d\le 4$. 
Very recently, there have been  attempts to use machine learning methods in order to solve problems in higher dimension or with common noise, see e.g.~\cite{carmona2019convergence1,carmona2019convergence2,carmona2019model,ruthotto2019machine}.   The main difference with the methods discussed in the present survey is that these methods do not rely on a finite-difference scheme but instead use neural networks to approximate functions with a relatively small number of parameters.  Further studies remain necessary before it is possible to really evaluate these methods.
\item  Numerical methods for the master equation  when the state space is finite, see works in progress by the first author and co-workers.

\end{itemize}

To conclude, acknowledging the fact that the theory of mean field games have attracted a lot of interest in the past decade, the authors think that some of the most interesting open problems arise in the actual applications of this theory. Amongst the most fascinating aspects of mean field games are their interactions with social sciences and economics. A few examples of such interactions have been discussed in the present survey, and many more applications remain to be investigated.

\begin{acknowledgement}
  The research of the first author was partially supported by the ANR (Agence Nationale de la Recherche) through
MFG project ANR-16-CE40-0015-01.
\end{acknowledgement}

%
%
\bibliographystyle{plain}
\bibliography{mfg-num-bib}

\end{document}